\renewcommand{\a}{\alpha}
\renewcommand{\b}{\beta}
\newcommand{\g}{\gamma}
\newcommand{\G}{\Gamma}
\renewcommand{\d}{\delta}
\renewcommand{\l}{\lambda}
\newcommand{\m}{\mu}
\newcommand{\n}{\nu}
\renewcommand{\o}{\omega}
\renewcommand{\O}{\Omega}
\renewcommand{\r}{\rho}
\newcommand{\s}{\sigma}
\renewcommand{\SS}{\Sigma}
\renewcommand{\t}{\tau}
\renewcommand{\th}{\theta}
\newcommand{\e}{\varepsilon}
\newcommand{\f}{\varphi}
\newcommand{\x}{\xi}
\newcommand{\y}{\eta}
\newcommand{\p}{\psi}
\newcommand{\A}{{\mathcal A}}
\newcommand{\R}{{\mathbb R}}
\newcommand{\Z}{{\mathbb Z}}
\renewcommand{\Pr}{{\mathbb P}}
\newcommand{\Hb}{{\mathbb H}}
\renewcommand{\H}{{\mathcal H}}
\newcommand{\Lc}{{\mathcal L}}
\newcommand{\Cc}{{\mathcal C}}
\newcommand{\Uc}{{\mathcal U}}
\newcommand{\Dc}{{\mathcal D}}
\newtheorem{theorem}{Theorem}[section]
\newtheorem{proposition}[theorem]{Proposition}
\newtheorem{lemma}[theorem]{Lemma}
\newtheorem{corollary}[theorem]{Corollary}
\theoremstyle{definition}
\newtheorem{definition}[theorem]{Definition}
\newtheorem{notation}[theorem]{Notation}
\theoremstyle{claim}
\theoremstyle{remark}
\newtheorem{remark}[theorem]{Remark}
\begin{document}

\title[Hausdorff spectrum of harmonic measure]
      {Hausdorff spectrum of harmonic measure}
\author{Ryokichi Tanaka}
\address{Tohoku University, 2-1-1 Katahira, Aoba-ku, 980-8577 Sendai, Japan}
\email{rtanaka@m.tohoku.ac.jp}
\date{\today. Supported by JSPS Grant-in-Aid for Young Scientists (B) 26800029.}

\maketitle

\begin{abstract}
For every non-elementary hyperbolic group, we show that for every random walk with finitely supported admissible step distribution, the associated entropy equals the drift times the logarithmic volume growth if and only if the corresponding harmonic measure is comparable with Hausdorfff measure on the boundary.
Moreover, we introduce one parameter family of probability measures which interpolates a Patterson-Sullivan measure and the harmonic measure, and establish a formula of Hausdorff spectrum (multifractal spectrum) of the harmonic measure.
We also give some finitary versions of dimensional properties of the harmonic measure.
\end{abstract}

\section{Introduction}\label{intro}

Let $\G$ be a finitely generated group.
For a probability measure $\m$ on it, we obtain a random walk on $\G$ by multiplying from right independent random elements with the law $\m$, and the distribution of the random walk at the time $n$ is given by the $n$-th convolution power $\m^{\ast n}$. 
There are several important quantities which capture the asymptotic behaviours of the random walks.
Define the entropy $h$ and the drift $l$ (also called the rate of escape, or the speed) by
$$
h:=\lim_{n \to \infty}-\frac{1}{n}\sum_{x \in \G}\m^{\ast n}(x)\log \m^{\ast n}(x), \ \ \ 
l:=\lim_{n \to \infty}\frac{1}{n}\sum_{x \in \G}|x|\m^{\ast n}(x),
$$
where $|\cdot|$ denotes the word norm associated with a finite symmetric set of generators of $\G$.
It is known that the entropy, introduced by Avez \cite{Av}, is equal to $0$ if and only if all bounded $\m$-harmonic functions on $\G$ are constants (\cite{D}, \cite{KV}).
The entropy and the drift are connected via the logarithmic volume growth $v$ of the group which is defined by
$
e^v:=\lim_{n \to \infty} |B_n|^{1/n},
$
where $|B_n|$ denotes the cardinality of the set $B_n$ of words of length at most $n$, by the inequality
\begin{equation}\label{fundineq}
h \le l v,
\end{equation}
as soon as all those quantities are well-defined (\cite{Gu}, see also e.g.,\ \cite{BHM08} and \cite{V}).
The inequality (\ref{fundineq}) is also called the fundamental inequality.
In \cite{V}, Vershik proposed to study the equality case of (\ref{fundineq}).
In this paper, we focus on hyperbolic groups in the sense of Gromov and characterise the equality of (\ref{fundineq}) in terms of the boundary behaviours of the random walks.
For every hyperbolic group $\G$, one can define the geometric boundary $\partial \G$, which is compact and admits a metric $d_\e$ with a parameter $\e>0$.
The harmonic measure $\n$ is defined by the hitting distribution of the random walk starting from the identity on the boundary $\partial \G$, corresponding to the step distribution $\m$ on $\G$.
The boundary $\partial \G$ has the Hausdorff dimension $D=v/\e$ and the $D$-Hausdorff measure $\H^D$ is finite and positive on $\partial \G$ \cite{C}.
Here the $D$-Hausdorff measure $\H^D$ is a natural measure to compare with the harmonic measure $\n$.
We call a probability measure $\m$ on the group $\G$ admissible if the support of $\m$ generates the whole group $\G$ as a semigroup.
In the present paper, $\m$ is always finitely supported and admissible unless stated otherwise.

\begin{theorem}\label{thm1}
For every finitely supported admissible probability measure $\m$ on every finitely generated non-elementary hyperbolic group $\G$ equipped with a word metric,
it holds that $h=l v$ if and only if the corresponding harmonic measure $\n$ and the $D$-Hausdorff measure $\H^D$ are mutually absolutely continuous and their densities are uniformly bounded from above and from below.
\end{theorem}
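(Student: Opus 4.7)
The plan is to translate both sides of the equivalence into a single uniform comparison between the word metric $|\cdot|$ and the Green metric $d_G$ on $\G$, where, writing $G(x,y) := \sum_{n \geq 0}\m^{\ast n}(x^{-1}y)$ for the Green function, we set $d_G(x,y) := -\log(G(x,y)/G(e,e))$. Two shadow estimates drive the argument. First, the Ancona inequalities (valid for finitely supported admissible walks on non-elementary hyperbolic groups, after Ancona, Gou\"ezel--Lalley, and Gou\"ezel) yield a harmonic shadow lemma
$$
\n(\Oc(x,R)) \asymp e^{-d_G(e,x)}
$$
uniformly in $x \in \G$, for every $R$ sufficiently large, where $\Oc(x,R) \subset \partial \G$ denotes the shadow at $\partial\G$ of the ball $B(x,R)$ as seen from $e$. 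Second, by Coornaert's theorem,
$$
\H^D(\Oc(x,R)) \asymp e^{-v|x|}.
$$
Dividing these gives $d\n/d\H^D \asymp e^{v|x|-d_G(e,x)}$ on each $\Oc(x,R)$; since hyperbolicity lets shadows cover $\partial\G$ with bounded multiplicity, the theorem reduces to the purely geometric equivalence: $\n$ and $\H^D$ are comparable with uniformly bounded densities if and only if $d_G(e,x) = v|x| + O(1)$ uniformly in $x \in \G$.

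The direction $\Leftarrow$ is the softer one: comparability of $\n$ with $\H^D$ transfers $D$-Ahlfors regularity (in the visual metric $d_\e$) from $\H^D$ to $\n$, so that the local dimension of $\n$ equals $D = v/\e$ everywhere on its support. On the other hand, the standard identity $\dim_{\rm loc}\n = h/(l\e)$ at $\n$-a.e.\ point, due to Ledrappier in related settings and established for hyperbolic groups by Blach\`ere--Ha\"{\i}ssinsky--Mathieu, forces $h/(l\e) = v/\e$, i.e., $h = lv$.

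The direction $\Rightarrow$ is the main obstacle. Along typical random trajectories one has $d_G(e,X_n)/n \to h$ and $|X_n|/n \to l$ almost surely, so under $h = lv$ one gets $d_G(e,X_n) - v|X_n| = o(n)$ along $\m$-a.e.\ trajectory; this needs to be upgraded to the pointwise estimate $d_G(e,x) = v|x| + O(1)$ over \emph{every} $x \in \G$. The strategy I would follow combines: (i) the strong Ancona inequality $G(e,x) \asymp \prod_i G(x_{i-1},x_i)$ along any geodesic $e = x_0,\dots,x_n = x$, which provides a multiplicative Gibbs structure for $e^{-d_G(e,\cdot)}$; (ii) the harmonic shadow lemma together with the summability $\sum_{x} \n(\Oc(x,R)) < \infty$ (from bounded multiplicity), which couples this Gibbs functional to the word-growth rate $v$; and (iii) the saturated equality $h = lv$, which rules out any exponential deviation of $d_G(e,x)$ from $v|x|$ by a concavity/entropy-maximisation argument. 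The technical crux, and the hardest step, will be passing from the almost-sure asymptotic $d_G/|\cdot| \to v$ to the pointwise bound with no exceptional set, which I expect to rely essentially on the finite support of $\m$ and on the Gromov hyperbolicity of $\G$ through the strong form of the Ancona inequality.
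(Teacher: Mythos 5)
Your reduction of the theorem to the two shadow estimates, and the resulting reformulation as ``$\n\asymp\H^D$ if and only if $d_G(e,x)=v|x|+O(1)$ uniformly in $x$'', is correct and is exactly the characterisation recorded in the paper (Remark 5.6 and the discussion around it). Your argument for the direction $\n\asymp\H^D\Rightarrow h=lv$ is also sound, modulo the caveat that the almost-everywhere local dimension formula $\dim_{\mathrm{loc}}\n=h/(\e l)$ is only available in the literature for \emph{symmetric} $\m$; for general finitely supported admissible $\m$ it is itself one of the results that has to be proved (the paper does this in Theorem 5.3 using the identity ``Green speed $=$ entropy'' of Blach\`ere--Ha\"{\i}ssinsky--Mathieu, which does hold without symmetry). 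That part is fixable with a short argument.

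The genuine gap is in the direction $h=lv\Rightarrow d_G(e,x)=v|x|+O(1)$, which you correctly identify as the crux but for which you offer only the phrase ``a concavity/entropy-maximisation argument'' together with the hope that finite support and hyperbolicity will let you remove the exceptional set. This is precisely the step that carries all the difficulty: the almost-sure asymptotic $d_G(e,X_n)-v|X_n|=o(n)$ is far weaker than a uniform $O(1)$ bound over all of $\G$, and no soft large-deviation or Borel--Cantelli argument will bridge that gap. What the paper actually does is introduce $\b(\th)=\lim_n\frac1n\log\sum_{|x|=n}G(1,x)^\th$, show $\b(0)=v$, $\b(1)=0$, $-\b'(1)=h/l$ and $-\b'(0)=v$, so that $h=lv$ forces $\b$ to be \emph{affine} on $[0,1]$; it then identifies $\b(\th)$ with the pressure $Pr(\th\f)$ of the potential $\f(\o)=-\log K(\a_\ast\o_0,\a_\ast\o)$ on a Cannon automaton, uses Gou\"ezel's thermodynamic formalism for non-irreducible subshifts to get vanishing second derivative of the pressure on each maximal component, and invokes the cohomological rigidity criterion (Parry--Pollicott) to conclude that $\f$ is cohomologous to the constant $-v$; only then does one obtain $G(1,x)\asymp e^{-v|x|}$ along the relevant paths, and even then only on sets $\partial\G_K$ of full measure with constants depending on $K$, so that a further ergodicity argument with the reflected walk (the analogue of Proposition 5.4) is needed to upgrade mutual absolute continuity to uniformly bounded densities. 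None of this machinery --- the family $\m_\th$, the convexity and differentiability of $\b$, the automatic structure, or the Liv\v{s}ic-type rigidity --- appears in your proposal, and without some substitute for it the hard implication is not proved. (Note also that the symmetric-case route via the hyperbolicity of the Green metric is unavailable here, since for non-symmetric $\m$ the Green distance is not a metric.)
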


In Section \ref{HS}, we prove this result in Theorem \ref{fund}.
See Remark \ref{rigidity}, for another equivalent condition to the equality case.
Blach\`ere, Haissinsky and Mathieu established this result for every finitely supported admissible and symmetric probability measure $\m$ \cite[Corollary 1.2, Theorem 1.5]{BHM11}.
We extend it to non-symmetric measures from a completely different approach as we describe later.
Recently, Gou\"ezel, Math\'eus and Maucourant 
have proven that for a non-elementary hyperbolic group $\G$ which is not virtually free and equipped with a word metric,
for every finitely supported admissible probability measure $\m$, the equality $h=lv$ never holds \cite{GMMpre}.
Together with their results, one concludes that in this setting, the harmonic measure $\n$ and the $D$-Hausdorff measure $\H^D$ are always mutually singular.
Connell and Muchnik proved that for an infinitely supported probability measure $\m$ on $\G$, the $D$-Hausdorff measure $\H^D$ (and also a Patterson-Sullivan measure) and the harmonic measure for $\m$ can be equivalent (\cite[Remark 0.5]{CM1} and \cite{CM2}).
On the other hand, Le Prince showed that for every finitely generated non-elementary hyperbolic group $\G$, there exists a finitely supported admissible and symmetric probability measure $\m$ such that the corresponding harmonic measure $\n$ and the $D$-Hausdorff measure $\H^D$ are mutually singular \cite{LeP}.
Ledrappier proved the corresponding result to Theorem \ref{thm1} for non-cyclic free groups for every finitely supported admissible probability measure $\m$ \cite[Corollary 3.15]{L} .
For free groups, it is straightforward to see that if $\m$ depends only on the word length associated with the standard symmetric generating set, then the corresponding harmonic measure coincides with the Hausdorff measure (the uniform measure on the boundary) up to a multiplicative constant.
Let us also mention some related works.
In \cite{GMM}, Gou\"ezel et al.\ studied a variant of the fundamental inequality (\ref{fundineq}) and also obtained some rigidity results for the equality case.
Apart from Cayley graphs of groups, Lyons extensively studied the equivalence of the harmonic measure and the Patterson-Sullivan measure for universal covering trees of finite graphs \cite{Lyo}.

A novel feature of our approach is to introduce one parameter family of probability measures $\m_\th$, which interpolates a Patterson-Sullivan measure and the harmonic measure on the boundary $\partial \G$.
Let us consider for every $\th \in \R$,
$$
\b(\th):=\lim_{n \to \infty}\frac{1}{n}\log \sum_{x \in S_n}G(1, x)^\th,
$$
where $G(x, y)$ is the Green function associated with $\m$, we denote by $1$ the identity of the group, and by $S_n$ the set of words of length $n$.
The limit exists by the Ancona inequality (Lemma \ref{beta}), and we show that $\b$ is convex, in fact, analytic except for at most finitely many points and continuously differentiable at every point (Proposition \ref{prss} and Corollary \ref{conti-diff}).
Theorem \ref{thm1} is deduced via the following dimensional properties of the harmonic measure $\n$.

\begin{theorem}\label{thm2}
Let $\G$ and $\m$ be as in Theorem \ref{thm1}, and
$\n$ be the corresponding harmonic measure on the boundary $\partial \G$.
It holds that
\begin{equation}\label{loc}
\lim_{r \to 0} \frac{\log \n\left(B(\x, r)\right)}{\log r}=\frac{h}{\e l}, \ \ \ \text{$\n$-a.e.\ $\x$}.
\end{equation}
Define the set 
$$
E_\a=\left\{\x \in \partial \G \ \bigg| \ \lim_{r\to 0}\frac{\log \n\left(B(\x, r)\right)}{\log r}=\a \right\},
$$
then the Hausdorff dimension of the set $E_\a$ is given by the Legendre transform of $\b$, i.e., 
$$
\dim_H E_\a=\frac{\a \th + \b(\th)}{\e},
$$
for every $\a=-\b'(\th)$, 
where $\b$ is continuously differentiable on the whole $\R$, and
$B(\x, r)$ denotes the ball of radius $r$ centred at $\x$.
\end{theorem}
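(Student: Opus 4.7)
The strategy is to combine the Ancona--type shadow lemma, which identifies boundary measures of shadows with values of the Green function, with a thermodynamic formalism built on the pressure function $\b$ and the interpolating measures $\m_\th$.

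I would first address the $\n$-almost sure local dimension (\ref{loc}). The random walk $(w_n)_{n \ge 0}$ converges almost surely to a random boundary point with law $\n$, and $|w_n|/n \to l$ almost surely; moreover $w_n$ stays within bounded distance from a geodesic ray towards its limit. The shadow lemma, a consequence of the Ancona inequality available through Lemma~\ref{beta}, gives the uniform comparison $\n(\Oc(w_n)) \asymp G(1, w_n)$, where $\Oc(w_n)$ is the shadow of $w_n$ and is comparable, as a subset of $\partial \G$ in the $d_\e$-metric, to a ball of radius $e^{-\e |w_n|}$ around the limit point. An application of Kingman's subadditive ergodic theorem to the cocycle $-\log G(1, w_n)$, which is essentially additive under the Markov property, yields the almost sure limit $-\tfrac{1}{n}\log G(1, w_n) \to h$. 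Dividing by $\e|w_n|$ and using $|w_n|/n \to l$ gives the target limit along the subsequence of radii $r_n = e^{-\e |w_n|}$; a monotonicity argument, exploiting that consecutive scales are controlled by the shadow lemma up to bounded multiplicative constants, upgrades this to a genuine limit as $r \to 0$.

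For the multifractal spectrum I would treat $\b$ as a pressure function and the family $\m_\th$ as Gibbs states with potential $\th \log G(1, \cdot)$; the crucial analytic input is the continuous differentiability of $\b$ obtained in Proposition~\ref{prss} and Corollary~\ref{conti-diff}, which ensures the Legendre transform is well defined at every $\th$. For each $\th \in \R$ I would establish: (i) under $\m_\th$, the boundary projections $\pi_n(\x)$ on spheres $S_n$ satisfy $\tfrac{1}{n}\log G(1, \pi_n(\x)) \to \b'(\th)$, by differentiating the defining series under the Gibbs weighting and invoking the Shannon--McMillan--Breiman theorem for the associated symbolic system; (ii) the measure $\m_\th$ is exact dimensional, with $\m_\th$-a.e.\ local dimension equal to $(-\b'(\th)\cdot \th + \b(\th))/\e$, via an $\m_\th$-adapted shadow lemma whose validity rests on tilted Ancona-type estimates for $G^\th$. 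Combined with the $\n$-shadow lemma, statements (i) and (ii) place full $\m_\th$-mass on $E_\a$ for $\a = -\b'(\th)$ and give the lower bound $\dim_H E_\a \ge (\a\th + \b(\th))/\e$ by Frostman's lemma.

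The matching upper bound I would obtain through a direct covering argument: for each $\th$ and small $\d > 0$, the number of $x \in S_n$ with $G(1, x) \in [e^{n(\b'(\th) - \d)}, e^{n(\b'(\th) + \d)}]$ is bounded above by roughly $e^{n(\b(\th) - \th \b'(\th) + O(\d))}$; every $\x \in E_\a$ lies in shadows of such $x$ at all sufficiently large scales, so covering $E_\a$ by the corresponding shadows of radius $e^{-\e n}$ yields a finite $s$-Hausdorff measure for $s = (\a\th + \b(\th))/\e + O(\d)$, and sending $\d \to 0$ closes the estimate. The main obstacle is the construction and Gibbs analysis of $\m_\th$ uniformly in $\th$: one needs a tilted shadow lemma and a version of Ancona's inequality for the weights $G^\th$ that holds uniformly on compact $\th$-intervals, so that (i) and (ii) above follow cleanly. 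Once this is in place, the remaining ergodic and covering arguments are standard for hyperbolic dynamics.
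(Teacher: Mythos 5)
Your overall strategy --- the shadow estimate $\n(S(x,R))\asymp G(1,x)$, almost-sure Green-function asymptotics along the walk for the local dimension, and the Gibbs family $\m_\th$ with the Legendre transform of $\b$ for the spectrum --- is the same as the paper's, and the proposal is essentially correct, but two points deserve comment. First, you derive $-\tfrac{1}{n}\log G(1,w_n)\to h$ from Kingman's theorem alone; Kingman only gives convergence of $-\tfrac{1}{n}\log F(1,w_n)$ to \emph{some} constant (the Green speed), and identifying that constant with the entropy $h$ is a separate nontrivial theorem of Blach\`ere--Ha\"issinsky--Mathieu \cite[Theorem 1.1]{BHM08}, which is the key external input here; also, the walk tracks its limiting geodesic only to within $o(n)$, not bounded distance, so one needs the Lipschitz comparison $d_G\le Ld$ of Lemma \ref{qi} to transfer the asymptotics from $w_n$ to $\g_\o(\lfloor ln\rfloor)$. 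Second, for the spectrum the paper does not use Shannon--McMillan--Breiman on the symbolic system nor a separate sphere-counting upper bound: it shows by a Chebyshev/exponential-moment estimate, using the Gibbs property of $\m_\th$ (Corollary \ref{Gibbsth}), Proposition \ref{comp} and the differentiability of $\b$ at $\th$, that $-\tfrac{1}{n}\log\n\left(S(\g(n),R)\right)\to\a$ for $\m_\th$-a.e.\ $\x$; the Gibbs property then pins the local dimension of $\m_\th$ at \emph{every} point of $E_\a$ to $(\th\a+\b(\th))/\e$, so a single Billingsley-type lemma gives both the lower and the upper bound. Your alternative (Birkhoff averages for the lower bound, plus covering $E_\a$ by the shadows of the roughly $e^{n(\b(\th)-\th\b'(\th)+O(\d))}$ points of $S_n$ with $G(1,x)\approx e^{n\b'(\th)}$, a count that indeed follows from Proposition \ref{comp} by Chebyshev) is a legitimate classical route, with the usual caveat that the covering must be applied to the countable pieces of $E_\a$ on which the approximation holds uniformly from some scale on; note also that no new ``tilted Ancona inequality'' is needed, since the Gibbs property of $\m_\th$ already follows from the ordinary Ancona inequality raised to the power $\th$ via the Patterson--Sullivan construction.
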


In Section \ref{HS}, we prove this result in Theorem \ref{dim-ent-speed} and Theorem \ref{Haus} in a more detailed form.
The measure $\m_\th$ is constructed by the Patterson-Sullivan technique (Theorem \ref{FGPSmeas}).
As $\th=0$, the measure $\m_0$ is a Patterson-Sullivan measure, and thus comparable with the $D$-Hausdorff measure $\H^D$, and as $\th=1$, the measure $\m_1$ is the harmonic measure $\n$ (Corollary \ref{FGPSth01}).
The probability measure $\m_\th$ satisfies that $\m_\th(E_\a)=1$ for $\a=-\b'(\th)$.
We call $\dim_H E_\a$ as a function in $\a$ the {\it Hausdorff spectrum} of the measure $\n$.
To determine the Hausdorff spectrum is called multifractal analysis which has been extensively studied in fractal geometry.
In fact, there are many technical similarities to analyse harmonic measures and self-conformal measures (\cite{Fe} and \cite{PU}).
For the backgrounds on this topic, see also \cite{F} and references therein.
We show that the measure $\m_\th$ satisfies a Gibbs-like property with respect to $\b(\th)$, where $\b(\th)$ is an analogue of the pressure (Corollary \ref{Gibbsth}).
This measure $\m_\th$ is also characterised by the eigenmeasures of certain transfer operator built on a symbolic dynamical system associated with an automatic structure of the group (Theorem \ref{am=mth}).
To study the measure $\m_\th$, we employ the results about the Martin boundary of a hyperbolic group by Izumi, Neshveyev and Okayasu \cite{INO} and a generalised thermodynamic formalism due to Gou\"ezel \cite{G}.
Note that the formula (\ref{loc}) is proved for every non-elementary hyperbolic group and for every finitely supported symmetric probability measure $\m$ in \cite[Theorem 1.3]{BHM11}, 
for every non-cyclic free groups and for every probability measure $\m$ of finite first moment in \cite[Theorem 4.15]{L},
for a general class of random walks on trees in \cite{Ktree} and for the simple random walks on the Galton-Watson trees in \cite{LPP}.

The following result is a finitary version of Theorem \ref{thm2}, inspired by the corresponding results for the Galton-Watson trees by Lyons, Pemantle and Peres \cite{LPP}.

\begin{theorem}\label{thm3}
Let $\G$ and $\m$ be as in Theorem \ref{thm1}, and consider the associated random walk starting at the identity on $\G$.
For every $a \in (0,1)$, 
there exists a subset $\G_a \subset \G$ such that the random walk stays in $\G_a$ for every time with probability at least $1-a$, and 
$$
\lim_{n \to \infty}|\G_a \cap S_n|^{1/n}=e^{h/l}.
$$
\end{theorem}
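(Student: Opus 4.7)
My plan rests on two classical inputs for the random walk $(X_n)$ starting at the identity: the Shannon--McMillan--Breiman theorem (Derriennic, Kaimanovich--Vershik) giving $-\frac{1}{n}\log \m^{\ast n}(X_n) \to h$ almost surely, and the subadditive ergodic theorem giving $|X_n|/n \to l$ almost surely. Fix $a \in (0,1)$. Applying Egoroff's theorem to these two a.s.\ convergences, one finds a sequence $\d_n \to 0$ and an $N=N(a)$ such that
$$E := \left\{\forall n \geq N : \ |X_n|/n \in (l - \d_n, l + \d_n), \ \m^{\ast n}(X_n) \geq e^{-n(h+\d_n)}\right\}$$
has probability at least $1 - a$.

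I would then set
$$T_n := \left\{x \in \G : \ |x|/n \in (l - \d_n, l + \d_n), \ \m^{\ast n}(x) \geq e^{-n(h+\d_n)}\right\}, \quad n \geq N,$$
and $\G_a := B_R \cup \bigcup_{n \geq N} T_n$, with $R$ large enough that $X_n \in B_R$ for all $n \leq N$ (possible because $\m$ is finitely supported). On $E$ the walk stays in $\G_a$ for every time, so $\Pr(\forall n : X_n \in \G_a) \geq 1 - a$.

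The upper bound follows from the standard entropy estimate $|T_n| \leq e^{n(h+\d_n)}$ (from $1 \geq \sum_{x \in T_n}\m^{\ast n}(x) \geq |T_n|\, e^{-n(h+\d_n)}$) together with the geometric constraint: any $x \in \G_a \cap S_m$ lies in some $T_n$ with $n \in [m/(l+\d_n), m/(l-\d_n)]$, so
$$|\G_a \cap S_m| \leq O(m \d_n) \cdot e^{m(h + \d_n)/(l - \d_n)},$$
and since $\d_n \to 0$ we conclude $\limsup_m |\G_a \cap S_m|^{1/m} \leq e^{h/l}$.

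The main obstacle is the matching lower bound $\liminf_m |\G_a \cap S_m|^{1/m} \geq e^{h/l}$ for \emph{every} $m$: pigeonholing $T_n$ across its distance range only yields the bound for some $m$ per $n$, leaving gaps. To fill them I would invoke Theorem \ref{thm2}: since $\n$ has exact local dimension $h/(\e l)$, one may take a compact $K_a \subset \partial \G$ with $\n(K_a) \geq 1 - a$ on which the local dimension convergence is uniform, so that $K_a$ cannot be covered by fewer than $e^{m h/l - o(m)}$ balls of radius $e^{-\e m}$. Via the shadow correspondence, these covering balls are shadows of at least $e^{m h/l - o(m)}$ points of $S_m$ lying close to geodesic rays ending in $K_a$. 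Enlarging $\G_a$ to include a sub-linearly thickened neighborhood of these geodesics — the sub-linear tracking of the walk by its limit geodesic keeps the walk inside $\G_a$ with probability $\geq 1 - a$ on the event $\{\x_\infty \in K_a\}$ — supplies the required lower bound. The crux will be reconciling the two constructions so that a single $\G_a$ works for both bounds simultaneously, which forces a careful calibration of $\d_n$ and of the tracking thickness.
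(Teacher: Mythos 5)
Your containment argument and upper bound are fine, and take a mildly different route from the paper: you run Shannon--McMillan--Breiman on the $n$-step distributions $\m^{\ast n}$ and count $|T_n|\le e^{n(h+\d_n)}$ by the trivial mass bound, whereas the paper works with the Green function, setting $V_a=\{x \,:\, G(1,x)\ge e^{-|x|(h/l+\d_{|x|})}\}$ along the tracking geodesic (via the Green-speed-equals-entropy theorem of \cite{BHM08}) and counting via $\sum_{x\in S_n}G(1,x)\asymp 1$ (Proposition \ref{comp} at $\th=1$). Either works for that half.

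The genuine gap is the lower bound, which you rightly call the crux and then leave as a programme. Concretely: once you enlarge $\G_a$ by a thickened neighbourhood of the geodesic rays ending in $K_a$, the upper bound must be re-proved for the enlarged set, and nothing in your sketch does this. It would require the \emph{uniform lower} estimate $\n(B(\x,r))\ge r^{h/(\e l)+s(r)}$ on $K_a$ (the other half of the Egorov statement you invoke only in one direction), plus the bounded-overlap property of the shadows $S(x,R)$, $x\in S_m$, to show that at most $e^{m(h/l+o(1))}$ points of $S_m$ lie within bounded distance of a geodesic ray ending in $K_a$, and then that the $o(m)$-thickening costs only a factor $e^{v\cdot o(m)}$. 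Deferring this "reconciliation" defers the actual content of the lower bound: as written, the enlarged $\G_a$ could be too large. Moreover the detour through the boundary is unnecessary. The paper's key observation is that the lower bound holds for \emph{any} set confining the walk with probability $\ge 1-a$, with no enlargement at all: let $\t_n$ be the first hitting time of the annulus $[S_n]$ and $\n_{[n]}$ the law of $x_{\t_n}$; Theorem \ref{K} proves an equipartition theorem $-\frac1n\log\n_{[n]}(x_{\t_n})\to h/l$ a.s.\ (using $\n_{[n]}(x_{\t_n})\le F(1,x_{\t_n})$, the Green-speed theorem, $\sum_{x\in[S_n]}F(1,x)\asymp 1$ and Borel--Cantelli), so the minimal cardinality $K_n(1-a)$ of a set of $\n_{[n]}$-mass $1-a$ is $e^{nh/l-o(n)}$, and a confining $\G_a$ satisfies $\n_{[n]}(\G_a\cap[S_n])\ge 1-a$, hence $|\G_a\cap[S_n]|\ge K_n(1-a)$ for every $n$. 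Because $\t_n$ is a hitting time of the sphere rather than a fixed time, this localises the count at every radius and eliminates exactly the "gaps" problem you identified in the time-$n$ pigeonhole. I recommend proving that equipartition statement for $\n_{[n]}$ first and deducing the lower bound from it, instead of enlarging $\G_a$.
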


In Section \ref{FR}, we prove this result in Theorem \ref{confinement}.
In particular, if $h<l v$, then the random walk is confined in an exponentially small part of the group with positive probability.
This can be compared with \cite[p.669]{V}, where a random generation of group elements which is called the Monte Carlo method is discussed.
For example, the random generation of group elements according to a random walk does not produce the whole data of the group in this case;
see also \cite{GMMpre}.

Let us return to Theorem \ref{thm1}.
For a symmetric probability measure $\m$, i.e., $\m(x)=\m(x^{-1})$ for every $x \in \G$,
one can define the Green metric $d_G(x, y)=-\log F(x, y)$, where $F(x, y)$ denotes the probability that the random walk starting at $x$ ever reaches $y$, and show that $\G$ is hyperbolic with respect to $d_G$ according to the Ancona inequality \cite{BHM11}.
The Green metric $d_G$ is not geodesic; nevertheless, one can use approximate trees argument and most of common techniques for the geodesic case work.
The harmonic measure $\n$ is actually a quasi-conformal (in fact, conformal) measure with respect to the metric induced in the boundary $\partial \G$ by the Green metric $d_G$, and this fact plays an essential role to deduce Theorem \ref{thm1}
and that the local dimension of the harmonic measure $\n$ is $h/(\e l)$ for $\n$-a.e.\ in Theorem \ref{thm2} in the symmetric case.
The symmetry of $\m$ is required to make the Green metric $d_G$ a genuine metric in $\G$; 
otherwise it is not clear that one can discuss about the hyperbolicity for a non-symmetric metric.
Here our alternative is to introduce a measure $\m_\th$, whose construction is actually inspired by a recent work of Gou\"ezel on the local limit theorem on a hyperbolic group \cite{G}, and to obtain the Hausdorff spectrum of the harmonic measure $\n$.
In many cases, a description of the Hausdorff spectrum is a main purpose on its own right, especially, when it is motivated by a problem in statistical physics.
A fundamental observation in this paper, however, is rather the converse; namely, we use the multifractal analysis to compare the harmonic measure with a natural reference measure which is the $D$-Hausdorff measure on the boundary of the group $\G$.
More precisely, we shall see that the function $\b$ is affine on $\R$ if and only if those two measures are mutually absolutely continuous (Theorem \ref{fund}).
Furthermore, the description of the Hausdorff spectrum implies that the harmonic measure has a rich multifractal structure as soon as it is singular with respect to the $D$-Hasudorff measure (Theorem \ref{Haus}).
In particular, the range of the Hausdorff spectrum contains the interval $[h/(\e l), v/\e]$.

Let us mention about an extension to a step distribution $\m$ of unbounded support.
The arguments in the present paper work once we have the Ancona inequality and its strengthened one (Theorem \ref{Holder}).
Gou\"ezel has proven those for every admissible probability measure $\m$ with a super-exponential tail \cite{Gpre}. 
At the same time, he has also proven a failure of description of the Martin boundary in the usual sense for an admissible probability measure with an exponential tail [ibid].
Hence the results in this paper are extended to every admissible step distribution $\m$ with a super-exponential tail, 
but it is obscure whether one could extend to $\m$ with an exponential tail in the present approach.
We shall also mention about an extension to a left invariant metric which is not induced by a word length in $\G$.
For example, one is interested in the setting where $\G$ acts cocompactly on the hyperbolic space $\Hb^n$ and the metric in $\G$ is defined by $d(x, y):=d_{\Hb^n}(xo, yo)$ for a reference point $o$ in $\Hb^n$.
In fact, in \cite{BHM11}, they proved Theorem \ref{thm1} for symmetric $\m$ with every metric $d$ which is hyperbolic and quasi-isometric to a word metric in $\G$ (not necessarily geodesic).
Some of our results still hold for such a metric $d$, and in fact, most of the results are expected to remain valid; but we do not proceed to this direction in the present paper for the simplification of the proofs.

Finally, we close this introduction by pointing out some related problems in a continuous setting.
On the special linear groups, the regularity problem of the harmonic measures is proposed by Kaimanovich and Le Prince \cite{KLeP}, and they showed that there exists a finitely supported symmetric probability measure (the support can generate a given Zariski dense subgroup) on $SL(d, \R)$ ($d \ge 2$) such that the corresponding harmonic measure is non-atomic singular with respect to a natural smooth measure class on the Furstenberg boundary.
They proved this result via the dimension inequality of the harmonic measure.
Bourgain constructed a finitely supported symmetric probability measure on $SL(2, \R)$ such that the corresponding harmonic measure is absolutely continuous with respect to Lebesgue measure on the circle \cite{Bou}.
Brieussel and the author proved analogous results in the three dimensional solvable Lie group $Sol$ \cite{BT}, namely, they showed that random walks with finitely supported step distributions on it can produce both absolutely continuous and singular harmonic measures with respect to Lebesgue measure on the corresponding boundary.
The dimension inequality is also used there to prove the existence of a finitely supported probability measure whose harmonic measure is singular. 
We point out, however, the dimension equality of type (\ref{loc}) is still missing in both cases, and in general, in the Lie group settings to the extent of our knowledge.

The organisation of this paper is the following:
In Section \ref{P}, we recall some known results about hyperbolic groups, random walks on it, and the Martin boundary.
In Section \ref{HM}, we construct the measure $\m_\th$, and show several properties of $\m_\th$ including the Gibbs property (Corollary \ref{Gibbsth}) and the ergodicity (Corollary \ref{ergodic}).
In Section \ref{Sym} we review a generalised thermodynamic formalism associated with an automatic structure of the group due to Gou\"ezel \cite{G} and prove the regularity of $\b$.
In Section \ref{HS}, we prove Theorem \ref{thm2} in Theorem \ref{dim-ent-speed} and in Theorem \ref{Haus}, and then we deduce Theorem \ref{thm1} in Theorem \ref{fund}.
In Section \ref{FR}, we show finitary results, following the Galton-Watson trees cases \cite{LPP}, and prove Theorem \ref{thm3} in Theorem \ref{confinement}.

\begin{notation}
Throughout this article, we use $C, C', \dots$, to denote absolute constants whose exact values may change from line to line,
and also use them with subscripts, for instance, $C_\d$ to specify its dependency only on $\d$.
For functions $f$ and $g$, we write $f \asymp g$ if there exist some constants $c_1, c_2 >0$ such that $c_1 g \le f \le c_2 g$, and also write $f \asymp_{\d} g$ if the implied constants depend only on $\d$.
Furthermore, for measures $\n_1$ and $\n_2$, we say that those are comparable if $\n_1$ and $\n_2$ are mutually absolutely continuous and their densities are bounded from above and from below, and also write $\n_1 \asymp \n_2$.
We denote by $f(n)=O(g(n))$  if there exists a constant $C>0$ independent of $n$ such that $f(n) \le Cg(n)$,
by $f(n)=o(g(n))$ if $f(n)/g(n) \to 0$ as $n \to \infty$, and by $f(n)\sim g(n)$ if $f(n)/g(n) \to 1$ as $n \to \infty$.
\end{notation}

\section{Preliminary}\label{P}

\subsection{Hyperbolic groups}
Let $\G$ be a finitely generated group with a finite symmetric set $S$ of generators.
We denote by $|x|$ the word length of $x$ in $\G$ associated with $S$, and by $d(x,y)=|x^{-1}y|$ the corresponding word metric which is left invariant under the action of $\G$. 
(We do not specify $S$.)
The Gromov product is defined by
$$
(x|y)_z=\frac{d(x,z)+d(y,z)-d(x,y)}{2},
$$
for $x$, $y$, and $z$ in $\G$.
We say that the metric space $(\G, d)$ is $\d$-hyperbolic, if there exists a constant $\d \ge 0$ such that for all $x, y, z$ and $w$ in $\G$,
it holds that 
$$
(x|y)_w \ge \min \left((x|z)_w, (y|z)_w\right)-\d.
$$
If $(\G, d)$ is $\d$-hyperbolic, then every geodesic triangle is $4\d$-slim, i.e., 
each side of the triangle is within the $4\d$-neighbourhood of the union of the other two sides.
The notion of the hyperbolicity, which means that it is $\d$-hyperbolic for some $\d\ge 0$, is invariant under quasi-isometry between two geodesic metric spaces, 
hence it does not depend on the choice of the finite symmetric set of generators.
More details about the hyperbolic groups, see \cite{GdlH}.

The boundary $\partial \G$ of $\G$ is defined 
in the following way:
Fix a base point the identity $1$ of $\G$ and write $(x|y)$ for $(x|y)_1$.
We say that a sequence $\{x_n\}_{n=0}^{\infty}$ converges to infinity if $(x_n|x_m) \to \infty$ as $n$ and $m$ tend to infinity, and
two sequences $\{x_n\}_{n=0}^{\infty}$ and $\{y_n\}_{n=0}^{\infty}$ converging to infinity are equivalent if $(x_n|y_m) \to \infty$ as $n$ and $m$ tend to infinity. 
We define the geometric boundary $\partial \G$ as the set of sequences converging to infinity modulo the equivalence relation.
Since $(\G, d)$ is a proper geodesic space (i.e., all closed balls are compact), 
the boundary $\partial \G$ is also identified with the set of geodesic rays modulo bounded distances \cite[Chapitre 7, Proposition 4]{GdlH}.
The Gromov product is extended to $\G \cup \partial \G$ by setting
$$
(\x|\y)=\sup \left\{\liminf_{n, m \to \infty} (x_n|y_m) \ \big| \ \{x_n\} \in \x, \{y_n\} \in \y \right\},
$$
where the supremum is taken over all sequences $\{x_n\}_{n=0}^{\infty}$ converging to $\x$ and $\{y_n\}_{n=0}^{\infty}$ converging to $\y$.
For the extended Gromov product, it holds that
$$
\liminf_{n, m \to \infty} (x_n|y_m) \le (\x|\y) \le \liminf_{n, m \to \infty} (x_n|y_m) + 2\d,
$$
for every such sequence by the $\d$-hyperbolicity.
The space $\G \cup \partial \G$ is compact, equipped with the topology by bases of balls in $\G$ and sets of the form 
$U_r(x)=\{\x \in \G \cup \partial \G \ \big | \ (\x|x) > r\}$.
In fact, it admits a compatible metric $d_\e$ with a parameter $\e>0$ small enough such that 
$$
C^{-1}e^{-\e(\x|\y)} \le d_\e(\x, \y) \le Ce^{-\e(\x|\y)},
$$
for some constant $C > 0$. 
We fix a parameter $\e$ once and for all so that all the quantities associated with the metric in the boundary $\partial \G$ such as the Hausdorff dimension are defined with respect to the metric $d_\e$.

Recall that the shadow $S(x, R)$ for $x$ in $\G$ and $R\ge 0$ is the set of points $\x$ in the boundary $\partial \G$ such that every geodesic ray issuing from $1$ converging to $\x$ passes through the ball of radius $R$ centred at $x$.
The shadows are compared with balls.

\begin{lemma}\label{shadow}
For every $R \ge 4\d$ and for every $\x$ in $\partial \G$, it holds that for every point $x$ on every geodesic ray starting from $1$ converging to $\x$,
$$
B(\x, C^{-1}e^{\e R}e^{-\e|x|}) \subset S(x, R) \subset B(\x, Ce^{\e R}e^{-\e|x|}).
$$
\end{lemma}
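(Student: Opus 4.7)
The plan is to translate the shadow condition $\y \in S(x, R)$ into a two-sided estimate $(\x|\y) = |x| + O(R + \d)$ on the extended Gromov product, and then convert it to a distance inequality via the standard equivalence $d_\e(\x, \y) \asymp e^{-\e(\x|\y)}$. All implicit constants will be allowed to depend on $\d$ and will ultimately be absorbed into the single constant $C$ appearing in the statement.

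For the right inclusion $S(x, R) \subset B(\x, C e^{\e R} e^{-\e|x|})$, I would fix $\y \in S(x, R)$, choose a geodesic ray $\g_\y$ from $1$ to $\y$ passing through some $y$ with $d(x, y) \le R$, and let $\g_\x$ be the given ray through $x$. Because both $x$ and $y$ lie on geodesic rays issuing from $1$, one has $(x|y) \ge |x| - R$. Taking $x_n \in \g_\x$ and $y_n \in \g_\y$ with $|x_n|, |y_n| \to \infty$, one has $(x_n|x) = |x|$ and $(y|y_n) = |y| \ge |x| - R$, so the four-point condition gives
$$
(x_n|y_n) \ge \min\bigl((x_n|x), (x|y), (y|y_n)\bigr) - 2\d \ge |x| - R - 2\d.
$$
Passing to the liminf yields $(\x|\y) \ge |x| - R - 2\d$, and hence $d_\e(\x,\y) \le Ce^{\e R}e^{-\e|x|}$.

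For the left inclusion $B(\x, C^{-1} e^{\e R} e^{-\e|x|}) \subset S(x, R)$, I would enlarge $C$ (depending only on $\d$) so that the hypothesis forces $(\x|\y) \ge |x| + 4\d$. Let $\g_\y$ be an arbitrary geodesic ray from $1$ to $\y$; I must show $\g_\y$ enters $B(x, R)$. Picking $u \in \g_\x$ and $v \in \g_\y$ far enough out that $(u|v) \ge (\x|\y) - 2\d \ge |x| + 2\d$, the geodesic triangle on $1, u, v$ is $4\d$-thin. The point $x$ on $[1, u]$ at distance $|x|$ from $1$ satisfies $|x| < (u|v)$, so by the tripod identification of the three sides, $x$ is matched to a point $y \in [1, v] \subset \g_\y$ with $d(x, y) \le 4\d$. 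Since $R \ge 4\d$, this point lies in $B(x, R)$, so $\y \in S(x, R)$.

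The main obstacle, such as it is, is bookkeeping rather than ideas: one must carefully absorb into $C$ both the multiplicative error in $d_\e \asymp e^{-\e(\cdot|\cdot)}$ and the $2\d$-defect in extending the Gromov product to the boundary, and the hypothesis $R \ge 4\d$ has to be invoked at precisely the final step so that $4\d$-thinness is enough to land inside $B(x, R)$.
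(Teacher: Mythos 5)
Your right-hand inclusion is correct: the chain $(x_n|y_m)\ge\min\bigl((x_n|x),(x|y),(y|y_m)\bigr)-2\d\ge |x|-R-2\d$ and the passage to $(\x|\y)$ are exactly the standard argument, and the constants are absorbed correctly. (The paper itself only cites \cite{Ca} for this lemma, so there is no in-text proof to compare against.)

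The left-hand inclusion, however, has a genuine gap at its very first step. You claim that by enlarging $C$ \emph{depending only on $\d$} the hypothesis $d_\e(\x,\y)<C^{-1}e^{\e R}e^{-\e|x|}$ forces $(\x|\y)\ge |x|+4\d$. It does not: using $C_0^{-1}e^{-\e(\x|\y)}\le d_\e(\x,\y)$ the hypothesis only yields $(\x|\y)\ge |x|-R+\e^{-1}\log(C/C_0)$, so to reach $|x|+4\d$ you would need $C\ge C_0e^{\e(R+4\d)}$, i.e.\ a constant depending on $R$. Since the whole point of the explicit factor $e^{\e R}$ in the statement is that $C$ is uniform in $R$ (and this uniformity is what makes the inner ball large, of radius comparable to $e^{\e R}e^{-\e|x|}$, containing points $\y$ with $(\x|\y)$ as small as roughly $|x|-R$), your subsequent tripod argument — which needs $|x|<(u|v)$ — simply does not apply to most points of the inner ball when $R$ is large. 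The fix is to add the case $(u|v)<|x|$: there your lower bound gives $|x|-(u|v)\le R-\e^{-1}\log(C/C_0)+2\d$, so matching the two rays at the branch-point parameter $t=(u|v)$ (where $d(\g_\x(t),\g_\y(t))\le 4\d$) produces a point of $\g_\y$ at distance at most $\bigl(|x|-(u|v)\bigr)+4\d\le R$ from $x$, provided you choose $C$ once and for all with $\e^{-1}\log(C/C_0)\ge 6\d$. With that case added (and your original argument retained for $(u|v)\ge|x|$), the proof is complete.
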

One is able to show the lemma by the $\d$-hyperbolicity (e.g.\ \cite{Ca}). 
The shadows are used to describe measures on the boundary $\partial \G$.

\subsection{Random walks on groups}

For a countable group $\G$, consider a probability measure $\m$ on it. Here we always assume that the measure $\m$ is admissible, i.e., the support of $\m$ generates the whole group $\G$ as a semigroup.
The random walk $\{x_n\}_{n=0}^{\infty}$ on $\G$ starting at the identity $1$ is defined by $x_n:=X_1\cdots X_n$ and $x_0=1$, where $X_1, X_2, \dots$, is a sequence of independent random elements with the common distribution $\m$.
We denote by $\O=\G^{\Z_{+}}$ the space of paths and by $\Pr$ the distribution of the random walk on $\O$ given by the push-forward measure of the product measure $\m^{\Z_{+}}$ induced by the map $\{X_n\}_{n=1}^{\infty} \mapsto \{x_n\}_{n=0}^{\infty}$.
The distribution of $x_n$ is given by the $n$-th convolution power of $\m$, namely,
$\Pr\left(x_n=x\right)=\m^{\ast n}(x)$.
Let us define the Green function by
$$
G(x, y)=\sum_{n=0}^{\infty}p_n(x,y),
$$
where $p_n(x,y)=\Pr(x_n=x^{-1}y)$.
Recall that the Green function has finite values if and only if the random walk is transient, and it is the case as soon as $\G$ is  a non-amenable group.
Henceforth, we assume that $\G$ is a non-elementary hyperbolic group, i.e., non-amenable, and $\m$ is a finitely supported admissible probability measure on $\G$.
In general, we do not assume that $\m$ is symmetric, i.e., $\m(x)=\m(x^{-1})$.
The following is a special case of the theorem by Kaimanovich \cite[Theorem 7.4]{K}.
We give a sketch of the proof for the sake of convenience.

\begin{theorem}[Kaimanovich]\label{Kaim}
The random walk $\{x_n\}_{n=0}^{\infty}$ converges to a point in the boundary $\partial \G$ almost surely in $\Pr$.
Moreover, there exists a positive constant $l>0$ such that
$$
d(1, x_n) = l n +o(n),
$$
$\Pr$-almost surely, and for $\Pr$-almost every sample path, there exists a unit speed geodesic ray $\g_\o$ emanating from the identity $1$ such that
$$
d(x_n, \g_\o(\lfloor l n\rfloor))=o(n).
$$
Here $\lfloor a \rfloor$ denotes the integer part of $a$.
Moreover, one can take the map from $\O$ to the space of geodesic rays endowed with the pointwise convergence topology: $\o \mapsto \g_\o$ as a Borel measurable map. 
\end{theorem}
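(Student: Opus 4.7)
The plan is to prove the four assertions in turn, with Kingman's subadditive ergodic theorem providing the linear drift and $\d$-hyperbolicity providing boundary convergence and geodesic tracking.

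First, I apply Kingman's theorem to the cocycle $f_n(\o) := d(1, x_n(\o)) = |X_1 \cdots X_n|$. Subadditivity $d(1, x_{n+m}) \le d(1, x_n) + d(x_n, x_{n+m})$ follows from left-invariance of the word metric, and the increment $d(x_n, x_{n+m}) = |X_{n+1} \cdots X_{n+m}|$ has the same law as $d(1, x_m)$ and is independent of the initial $n$ steps; finite support of $\m$ makes $f_1$ bounded. Together with ergodicity of the Bernoulli shift on $(\O, \Pr)$, this yields $d(1, x_n)/n \to l$ almost surely for some constant $l \ge 0$. Positivity $l>0$ uses non-amenability of $\G$ (automatic since $\G$ is non-elementary hyperbolic); the cleanest route is Guivarc'h's inequality $h \le l v$ together with $h>0$ (bounded $\m$-harmonic functions on a non-amenable group are non-constant) and $v<\infty$.

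Second, I establish convergence $x_n \to \x_\o \in \partial \G$ together with the refined estimate $(x_n | \x_\o) = l n + o(n)$. Along the dyadic subsequence $n_j := 2^j$, stationarity combined with Kingman applied to shifted cocycles gives $d(x_{n_j}, x_{n_{j+1}})/n_j \to l$ almost surely, so the Gromov product
$$
(x_{n_j} | x_{n_{j+1}}) = \tfrac{1}{2}\bigl( d(1, x_{n_j}) + d(1, x_{n_{j+1}}) - d(x_{n_j}, x_{n_{j+1}}) \bigr) = l n_j + o(n_j)
$$
tends to infinity and $\{x_{n_j}\}$ is Cauchy in the $\e$-metric, converging to some $\x_\o \in \partial \G$. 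The $\d$-hyperbolic inequality $(a | \x_\o) \ge \min\bigl( (a | b), (b | \x_\o) \bigr) - \d$ with $b = x_{n_j}$ extends convergence to the full sequence. For the sharper estimate $(x_n | \x_\o) = l n + o(n)$ I use a Busemann-cocycle argument: in $\d$-hyperbolic geometry one has $(x_n | \x_\o) = \tfrac{1}{2}(d(1, x_n) + b_{\x_\o}(1, x_n)) + O(\d)$, and the cocycle $b_{\x_\o}(1, x_n)$ has drift $l$ under the shift because the walk escapes radially towards $\x_\o$.

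Third, for the tracking statement, let $\g_\o$ be any unit-speed geodesic ray from $1$ to $\x_\o$. The standard $\d$-hyperbolic projection estimate
$$
d\bigl( x, \g(d(1, x)) \bigr) \le 2\bigl( d(1, x) - (x | \x) \bigr) + C\d,
$$
valid for any $x \in \G$ and any unit-speed geodesic ray $\g$ from $1$ to $\x \in \partial \G$, combined with $(x_n | \x_\o) = l n + o(n)$ and $|d(1, x_n) - \lfloor l n \rfloor| = o(n)$, yields $d(x_n, \g_\o(\lfloor l n \rfloor)) = o(n)$. For Borel measurability of $\o \mapsto \g_\o$, the space of unit-speed geodesic rays from $1$ is compact metrisable under pointwise convergence, and the multifunction sending $\x \in \partial \G$ to the set of rays from $1$ ending at $\x$ is non-empty and closed-valued, so Kuratowski--Ryll-Nardzewski delivers a Borel selection; composing with the Borel map $\o \mapsto \x_\o$ produces the required $\o \mapsto \g_\o$.

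The main obstacle is the Busemann-cocycle argument underlying $(x_n | \x_\o) = l n + o(n)$, since one must identify the drift of the Busemann cocycle with the full drift $l$ rather than some smaller constant; this relies on $\x_\o$ being the actual boundary limit of the walk, so that the walk moves towards $\x_\o$ at the full linear rate. Once this refined Gromov-product estimate is in hand, both the geodesic tracking and the measurable selection reduce to standard hyperbolic-geometry and descriptive set theory.
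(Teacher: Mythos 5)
Your overall strategy (Kingman for the drift, linear growth of Gromov products for boundary convergence, a tripod estimate for tracking, measurable selection of rays) is the same as the paper's sketch, and your route to $l>0$ via $h>0$ and $h\le lv$ is equivalent to the paper's appeal to non-triviality of the Poisson boundary. But two of your intermediate steps have genuine gaps. First, the claim that $d(x_{n_j},x_{n_{j+1}})/n_j\to l$ almost surely along the dyadic times is not a consequence of ``Kingman applied to shifted cocycles'': Kingman gives $d(1,x_n)/n\to l$ a.s., i.e. convergence of $f_n/n$, but $d(x_{n_j},x_{n_{j+1}})=f_{n_j}\circ T^{n_j}$, and almost sure convergence does not transfer to $f_{n}\circ T^{n}/n$ without an extra ingredient (e.g.\ Azuma-type concentration for the word length plus Borel--Cantelli, or convergence in probability plus passage to a further subsequence). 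Second, and more seriously, your key refined estimate $(x_n|\x_\o)=ln+o(n)$ rests on the assertion that the Busemann cocycle $b_{\x_\o}(1,x_n)$ has drift $l$ ``because the walk escapes radially towards $\x_\o$'' --- which is precisely the geodesic-tracking statement you are trying to prove. As written this is circular, and you correctly flag it as the main obstacle without resolving it.

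Both gaps disappear if you argue as the paper does, with consecutive rather than dyadic increments. Since $\m$ is finitely supported, $d(x_n,x_{n+1})$ is bounded by a constant $r$, so $(x_n|x_{n+1})=\tfrac12\bigl(d(1,x_n)+d(1,x_{n+1})-d(x_n,x_{n+1})\bigr)\ge ln-o(n)$ with no shifted-cocycle issue. Hence $d_\e(x_n,x_{n+1})\le Ce^{-\e(x_n|x_{n+1})}\le Ce^{-\e(ln-o(n))}$ is summable, so $\{x_n\}$ is Cauchy for the genuine metric $d_\e$ on the compact space $\G\cup\partial\G$ and converges to some $\x_\o\in\partial\G$ (the limit lies in the boundary because $d(1,x_n)\to\infty$). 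The same summability gives the refined estimate for free: for $m>n$, $C^{-1}e^{-\e(x_n|x_m)}\le d_\e(x_n,x_m)\le\sum_{k\ge n}d_\e(x_k,x_{k+1})\le C'e^{-\e(ln-o(n))}$, whence $(x_n|x_m)\ge ln-o(n)$ uniformly in $m$, and letting $m\to\infty$ yields $(x_n|\x_\o)\ge ln-o(n)$; the matching upper bound is the trivial $(x_n|\x_\o)\le d(1,x_n)=ln+o(n)$. No Busemann cocycle is needed. From there your tripod/projection estimate correctly gives $d(x_n,\g_\o(\lfloor ln\rfloor))=o(n)$, and your Kuratowski--Ryll-Nardzewski selection is a legitimate alternative to the paper's lexicographically minimal ray for the measurability of $\o\mapsto\g_\o$.
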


\begin{proof}[A sketch of the proof]
For an admissible probability measure $\m$, the drift $l$ is positive since the Poisson boundary of $(\G, \m)$ is non-trivial.
The Kingman subadditive ergodic theorem implies that $d(1, x_n)=l n + o(n)$ for $\Pr$-a.s.
Together with $d(x_n, x_{n+1})=o(n)$ (in fact, bounded in our case), the Gromov product grows lineally: $(x_n|x_{n+1})=l n + o(n)$, a.s.\ in $\Pr$.
It follows that the sequence $\{x_n\}_{n=0}^{\infty}$ is Cauchy with respect to the metric $d_\e(\x, \y)\le Ce^{-\e(\x|\y)}$ in $\G\cup \partial \G$, and thus it converges to a point $\x$ in the boundary.
For a unit speed geodesic ray $\g_\o$ converging to $\x$ from $1$, one can deduce that each $x_n$ is within the distance $o(n)$ from $\g_\o(\lfloor l n\rfloor)$.
The map $\o \mapsto \g_\o$ is taken to be Borel measurable by choosing the lexicographically minimum geodesic ray.
More details, see Section 7 in \cite{K}.
\end{proof}

The harmonic measure is the hitting distribution of the random walk on the boundary.

\begin{definition}
We call the distribution of the point to which the random walk $\{x_n\}_{n=0}^{\infty}$ converges, the {\it harmonic measure} $\n$ corresponding to $\m$.
\end{definition}

The harmonic measure $\n$ is a unique stationary measure on the boundary $\partial \G$, i.e.,
\begin{equation}\label{stationary}
\n=\sum_{g \in \G}\m(g)g_\ast \n,
\end{equation}
where $x_\ast \n$ is the push-forward of $\n$ by the left multiplication of $x$, or equivalently the hitting distribution of the random walk starting from $x$ with the same step distribution $\m$; see e.g.\ \cite[Theorem 2.4]{K}.

\subsection{Martin boundary}

The Green function is factorized as 
$$
G(x, y)=F(x, y)G(y, y),
$$
where $F(x, y)$ is the probability that the random walk starting at $x$ ever hits $y$, and $G(y,y)=G(1,1)$ by the left invariance by $\G$ of the transition probability. 
Associated with the Green function $G(x,y)$, the Martin kernel is defined by $K(x, y):=G(x,y)/G(1,y)$.
The Martin compactification $\overline \G$ of $\G$ is the smallest compactification of $\G$ where $\G$ is equipped with the discrete topology, such that for every $x$, the Martin kernel $K(x, \cdot)$ extends continuously on $\overline \G$.
The Martin boundary is the part which is newly added to $\G$, i.e., $\partial_M \G=\overline \G \setminus \G$.
For every finitely supported admissible $\m$ on $\G$, the Martin boundary $\partial_M \G$ is identified with the geometric boundary $\partial \G$ by Ancona \cite{A} (see also \cite{INO}).
The Poisson boundary $(\partial \G, \n)$ is realised in the Martin boundary, and it holds that for $\n$-a.e.\ $\x$,
\begin{equation}\label{Poisson}
\frac{dx_\ast \n}{d\n}(\x)=K(x, \x).
\end{equation}
More details, see, e.g.\ \cite[Chapter IV]{W}.
Let us recall the so-called Ancona inequality which plays an important role to identify the Martin boundary with $\partial \G$.

\begin{theorem}[\cite{A}, Theorem 3.3, and \cite{INO}, Proposition 2.1]\label{Ancona}
There exists a constant $C\ge 1$ such that for every geodesic segment connecting $x$ and $y$, and every point $z$ on it,
\begin{equation}\label{A}
C^{-1}G(x, z)G(z, y) \le G(x, y) \le C G(x, z)G(z, y). 
\end{equation}
\end{theorem}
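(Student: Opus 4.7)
The plan is to treat the two inequalities separately. The lower bound $G(x,y) \ge C^{-1} G(x,z) G(z,y)$ is purely probabilistic and elementary: by the strong Markov property, any trajectory from $x$ that first visits $z$ and then reaches $y$ contributes to $F(x,y)$, so $F(x,y) \ge F(x,z) F(z,y)$. Combined with the identity $G(a,b) = F(a,b) G(1,1)$, this yields the claimed lower bound with $C = G(1,1)$, and no hyperbolicity is needed.

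The upper bound $G(x,y) \le C G(x,z) G(z,y)$ is the substance of the Ancona inequality and uses $\d$-hyperbolicity essentially. Let $R_0$ be the radius of $\supp \m$. The strategy is to construct a \emph{barrier}: a finite set $H \subset B(z, R)$, with $R = R(\d, R_0)$, such that every trajectory of the random walk from $x$ eventually reaching $y$ must enter $H$. The existence of such $H$ follows from the thin-triangle property, because any path from $x$ to $y$ avoiding a large neighbourhood of $z$ could be used to build a quasi-geodesic short-cut contradicting $z \in [x,y]$. Applying the strong Markov property at the first visit to $H$, one writes $F(x,y) = \sum_{w \in H} q_H(x,w) F(w,y)$. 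A Harnack-type estimate, valid because $H$ has diameter $O(R)$ and $\m$ is admissible, gives $F(w,y) \asymp F(z,y)$ uniformly on $H$, reducing everything to proving $\sum_{w \in H} q_H(x,w) \le C' F(x,z)$.

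The remaining bound is the hard one: I would iterate the argument across nested barriers $H_k \subset B(z, kT) \setminus B(z, (k-1)T)$ for $T = T(\d, R_0)$ large, and show by induction on $k$ that the probability of first reaching $H$ along an excursion leaving $B(z, kT)$ decays exponentially in $k$. The induction uses the thin-triangle property to force such detours to accumulate length proportional to their transverse deviation, together with the sub-multiplicative behaviour of $F$ along geodesic segments already controlled by the inductive hypothesis. Summing the resulting geometric series then yields the desired bound. The main obstacle is precisely this exponential transverse decay: the soft ingredients (strong Markov, Harnack, and the existence of a single barrier) are standard, but controlling excursions transverse to the geodesic is the heart of Ancona's original construction in \cite{A}, and of the alternative proof in \cite{INO}, and is where hyperbolicity is used in an indispensable way.
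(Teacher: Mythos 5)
First, a point of reference: the paper does not prove this statement. It is imported verbatim from Ancona and from Izumi--Neshveyev--Okayasu, and the only proof content supplied in the text is the one-line remark that the first inequality is clear because $F(x,y)\ge F(x,z)F(z,y)$. Your opening paragraph reproduces exactly that observation, correctly and with the right constant $C=G(1,1)$ via $G(a,b)=F(a,b)G(1,1)$; that half needs no hyperbolicity and is fine.

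Your treatment of the upper bound, however, contains a genuine error before it reaches the gap you honestly acknowledge. You assert the existence of a finite barrier $H\subset B(z,R)$, with $R$ depending only on $\d$ and the support of $\m$, that \emph{every} trajectory from $x$ reaching $y$ must enter. No such set exists unless the group is virtually free: in a one-ended hyperbolic group (a surface group, say --- precisely the case emphasised in the discussion of \cite{GMMpre}) the complement of any finite set has a single unbounded component, so once $x$ and $y$ are far from $z$ they are joined by paths in the Cayley graph avoiding $B(z,R)$, and the walk follows such paths with positive probability. The thin-triangle property constrains geodesics, not arbitrary walk trajectories, so the ``quasi-geodesic short-cut'' contradiction you invoke does not apply. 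Consequently the identity $F(x,y)=\sum_{w\in H}q_H(x,w)F(w,y)$ is false and the reduction to $\sum_{w\in H}q_H(x,w)\le C'F(x,z)$ collapses. The fact that trajectories \emph{can} bypass $z$, but only at a cost exponential in their transverse deviation, is the entire content of the theorem; your final paragraph states this correctly as a plan (stratify by deviation, prove geometric decay in the stratum index), but the inductive estimate is exactly what is not carried out. On a tree your barrier does exist and your argument essentially closes, which is why the free-group case is easy; for general hyperbolic groups what remains after deleting the false barrier claim is an accurate roadmap of Ancona's proof with its only substantive step missing.
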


For an extension to probability measures $\m$ of super-exponential tails, see \cite{Gpre}.
Note that the first inequality is clear since $F(x, y)\ge F(x, z)F(z, y)$.
The Ancona inequality implies that the Green function $G(x, y)$ is almost multiplicative along geodesics.
We also frequently use the Harnack inequality: there exists a constant $C' \ge 1$ such that for $x, y, z$, and $z'$ in $\G$,
\begin{equation}\label{H}
G(x, z) \le C'^{d(z, z')}G(x, z')
\ \text{ and } \ 
G(z, y) \le C'^{d(z, z')}G(z', y).
\end{equation}
These follow since the Green function is a super-harmonic function.
Combining (\ref{A}) with (\ref{H}), we deduce that for every geodesic segment connecting $x$ and $y$, and every point $z$ within the distance $r$ from the geodesic segment, 
$$
G(x, y) \le CC'^{2r}G(x, z)G(z, y).
$$ 
One of important consequences of (\ref{A}) is that the Martin kernel $K(x, \cdot)$ is H\"older continuous on $\partial \G$ for each $x$ \cite[Theorem 3.3]{INO}.
In this paper, we use a bit stronger form of this fact.

\begin{theorem}\label{Holder}
There exist constant $C>0$ and $\r < 1$ such that if two geodesic segments connecting $x$ and $y$, and $x'$ and $y'$, respectively, have a common geodesic segment of length $n$ within each of those $4\d$-neighbourhoods,
then
$$
\left|\frac{G(x, y)/G(x', y)}{G(x, y')/G(x',y')}-1\right| \le C\r^n.
$$
\end{theorem}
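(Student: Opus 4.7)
The plan is to upgrade the multiplicative Ancona inequality (\ref{A}) into exponential decorrelation along the common segment, via a Doeblin-type contraction on balayage measures in the spirit of Ancona's original proof. Label the successive points of the common segment by $a_0,a_1,\dots,a_n$, and fix a radius $R$ that is a large multiple of $\delta$ and of the diameter of $\supp\m$; set $B_i=B(a_i,R)$. The $4\delta$-fellow-travel hypothesis, combined with the slimness of geodesic triangles in a hyperbolic group, forces every geodesic from the ``upstream'' side (near $x$ or $x'$) to the ``downstream'' side (near $y$ or $y'$) to pass through each $B_i$. The strong Markov property applied at the first entrance of the random walk into $B_i$ then yields an approximate balayage representation
$$
G(w,w')\;\asymp\; \sum_{z\in B_i}\pi_i^w(z)\,G(z,w'),
$$
valid for $w$ upstream and $w'$ downstream of $B_i$, where $\pi_i^w$ is the first-entrance distribution into $B_i$ from $w$. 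The error term (contribution of paths avoiding $B_i$ on their way to $w'$) is controlled by (\ref{A}) and (\ref{H}) and is a bounded fraction of the main term.

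The heart of the argument is a uniform minorisation. Applying (\ref{A}) along geodesics from $w$ into $B_i$ through two different entry points $z,z'\in B_i$, together with (\ref{H}) to absorb bounded displacements, gives a constant $\eta>0$ depending only on $R$ and $\delta$, such that the normalised measure $\hat\pi_i^w:=\pi_i^w/\pi_i^w(B_i)$ satisfies
$$
\hat\pi_i^w \;\geq\; \eta\,\lambda_i
$$
for a fixed reference probability measure $\lambda_i$ on $B_i$, uniformly in the upstream point $w$. Consequently the Markov-type kernel $\mathcal{K}_i\colon\hat\pi_i^w\mapsto\hat\pi_{i+1}^w$ obtained by restarting the walk at the next cross-section satisfies a Doeblin condition with parameter $\eta$, and is a strict contraction in total variation (equivalently, by Birkhoff's theorem, in the Hilbert projective metric on the positive cone over $B_i$).

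Iterating the contraction over the $n$ cross-sections yields
$$
\bigl\|\hat\pi_n^{x}-\hat\pi_n^{x'}\bigr\|_{\rm TV}\;\leq\; C(1-\eta)^n.
$$
Substituting the balayage formula at $B_n$ into each of the four Green functions in the statement and using the Harnack inequality (\ref{H}) to bound $G(z,y)/G(z',y)$ and $G(z,y')/G(z',y')$ uniformly for $z,z'\in B_n$, a direct calculation shows
$$
\frac{G(x,y)/G(x',y)}{G(x,y')/G(x',y')}\;=\;1+O\bigl((1-\eta)^n\bigr),
$$
which gives the claim with $\r:=1-\eta<1$.

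The step I expect to be the main obstacle is the uniform minorisation. It requires verifying that every geodesic between upstream and downstream points is trapped in a uniformly controlled neighbourhood of each $a_i$, and applying (\ref{A}) across $B_i$ in such a way that the resulting constants depend only on $R$ and $\delta$, not on the particular entry points or on $i$. This in turn forces a careful choice of $R$ (large enough that the balayage captures essentially all of the Green function mass, small enough that the Ancona and Harnack constants remain uniform) so that the Doeblin constant $\eta$ is genuinely independent of $i$; the rest of the contraction argument is then mechanical.
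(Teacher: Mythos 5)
Your proposal is correct and follows essentially the same route as the paper, whose proof of Theorem \ref{Holder} consists of invoking the argument of Gou\"ezel--Lalley \cite[Theorem 4.6]{GL}: that argument is precisely your barrier decomposition along the common geodesic segment combined with a Doeblin/Birkhoff contraction of the first-passage kernels, with the uniform minorisation supplied by the Ancona inequality (\ref{A}) and the Harnack inequality (\ref{H}). The one point to treat with care, which you already flag, is that the mass of trajectories avoiding each ball $B_i$ must be made an arbitrarily small (not merely bounded) fraction of the main term by choosing $R$ large, which is exactly what the relative form of the Ancona inequality from \cite{INO} and \cite{GL} provides.
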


The proof follows from the same argument as in \cite[Theorem 4.6]{GL}. 
See also an extension to probability measures $\m$ of super-exponential tails \cite{Gpre}.
We note that the result is deduced once the Ancona inequality (in fact, a relative version of it given in \cite{INO}) holds.
In \cite{GL} and \cite[Theorem 2.9]{G}, they proved the above estimate for a symmetric $\m$ to deal with the $r$-Green function $G_r(x, y)$. Here we discuss only $G(x, y)$ which is enough for our purpose.
From Theorem \ref{Holder}, for every $x$ and $x'=1$, and every sequence $\{y_n\}_{n=0}^\infty$ along a geodesic ray from $1$ converging to $\x$, the Martin kernel $G(x, y_n)/G(1, y_n)$ converges to the limit $K(x, \x)$. Furthermore, $\log K(x, \x)$ is H\"older continuous in $\x$ with an exponent independent of $x$.

In \cite{INO}, Izumi, Neshveyev and Okayasu proved a Gibbs-like property of a harmonic measure in terms of $G(x, y)$ or $F(x, y)$ in the following sense:

\begin{theorem}[\cite{INO}, Theorem 4.1]\label{Gibbs}
Let $R \ge 4\d$.
It holds that for every $x$ in $\G$,
$$
\n(S(x, R)) \asymp_R G(1, x),
$$
where the implied constants are independent of $x$.
\end{theorem}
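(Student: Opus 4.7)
The plan is to reduce the statement to a harmonic-measure estimate for $x_\ast \n$ via the Poisson kernel formula (\ref{Poisson}), use the Ancona inequality (\ref{A}) to control the Martin kernel on the shadow, and finally handle the resulting uniform lower bound by a $\d$-hyperbolicity argument.

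For the reduction, fix $\x \in S(x, R)$ and pick $y_n$ along a geodesic from $1$ to $\x$ that passes through $B(x, R)$, with $y_n \to \x$. The Ancona inequality (\ref{A}) applied along this geodesic, combined with the Harnack inequality (\ref{H}) used to shift by at most $R$ from the point of the geodesic inside $B(x, R)$ to $x$ itself, yields $G(1, y_n) \asymp_R G(1, x)\, G(x, y_n)$. Dividing by $G(1, y_n)$ and letting $n \to \infty$ gives $K(x, \x) \asymp_R G(1, x)^{-1}$ for every $\x \in S(x, R)$. Since $d x_\ast\n/d\n = K(x, \cdot)$ by (\ref{Poisson}), integrating over $S(x, R)$ against $\n$ produces $x_\ast\n(S(x, R)) \asymp_R G(1, x)^{-1}\, \n(S(x, R))$, equivalently
$$
\n(S(x, R)) \asymp_R G(1, x) \cdot x_\ast \n(S(x, R)).
$$
The trivial bound $x_\ast\n(S(x, R)) \le 1$ already gives the upper estimate $\n(S(x, R)) \le C_R\, G(1, x)$.

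The lower bound reduces to a uniform estimate $x_\ast\n(S(x, R)) \ge c_R > 0$. By left-invariance this equals $\n(x^{-1} S(x, R))$, and the hyperbolic identity $(\y|1)_z + (\y|z)_1 = |z| + O(\d)$ at $z = x^{-1}$ identifies $x^{-1} S(x, R)$, up to an $O(\d)$ correction, with $\{\y \in \partial \G : (\y | x^{-1})_1 \le R\}$. Any two points $\y, \y'$ in the complement of this set satisfy $(\y|\y')_1 \ge R - O(\d)$ by $\d$-hyperbolicity, so the complement has visual diameter at most $C e^{-\e R}$, a bound independent of $x$. Since $\n$ is non-atomic with full support on the compact space $\partial \G$, one has $\sup_{\x \in \partial \G} \n(B(\x, r)) < 1$ for $r$ below a positive threshold depending only on $\n$, and this supremum tends to $0$ as $r \to 0$; for $R$ large enough that $C e^{-\e R}$ lies below the threshold, this furnishes the desired uniform lower bound. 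The residual range $4\d \le R \le R^\ast$ is handled by splitting on $|x|$: finitely many $x$ with $|x| \le N$ have $x_\ast\n(S(x, R)) > 0$ since $S(x, R)$ is non-empty open and $x_\ast\n$ has full support, giving a positive minimum, while for $|x| > N$ a covering of $S(x, R^\ast)$ by translates $S(y, R)$ with $d(x, y)$ bounded, combined with the Harnack inequality, transfers the large-$R$ estimate.

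I expect the main obstacle to be this uniform lower bound on $x_\ast \n(S(x, R))$: turning the pointwise non-atomicity of $\n$ into a scale-free estimate at the specific radius $C e^{-\e R}$, and then closing the small-$R$ gap. The argument genuinely uses the $x$-independence of the diameter bound on the exceptional set -- itself a manifestation of $\d$-hyperbolicity -- together with the global regularity of $\n$ provided by admissibility of $\m$ and non-elementarity of $\G$. By contrast, the Ancona-based reduction and the upper bound are direct consequences of the machinery already set up in Section \ref{P}.
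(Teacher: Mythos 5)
Your argument is sound in its main lines, but note that the paper does not actually prove this statement: it is quoted from \cite{INO}, Theorem 4.1, whose original proof is probabilistic (a first-passage analysis of the walk through a neighbourhood of $\g(m)$), with the Remark after the statement merely translating the sets $U(\x,m)$ of \cite{INO} into shadows. Your potential-theoretic route --- $K(x,\x)\asymp_R G(1,x)^{-1}$ on the shadow via Ancona and Harnack, then $x_\ast\n(S(x,R))\asymp_R G(1,x)^{-1}\n(S(x,R))$ via (\ref{Poisson}), then a uniform lower bound on $x_\ast\n(S(x,R))=\n(x^{-1}S(x,R))$ from the smallness of the complement of $x^{-1}S(x,R)$ --- is precisely the scheme the paper itself uses later for the more general Gibbs property of $\m_\th$ (Corollary \ref{Gibbsth}), specialised to $\th=1$; and, importantly, your proof does not pass through $\b(1)=0$ or the construction of $\m_\th$, so there is no circularity with Lemma \ref{th01}, which uses Theorem \ref{Gibbs}. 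What each approach buys: the probabilistic proof gives the statement directly for all $R\ge 4\d$, whereas yours naturally produces it only for $R\ge R^\ast$, with $R^\ast$ determined by the threshold $r_0$ below which no ball carries nearly full $\n$-mass.

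The one step that does not work as written is the reduction of the range $4\d\le R\le R^\ast$ to the large-$R$ case when $|x|>N$. Covering $S(x,R^\ast)$ by translates $S(y_i,R)$ and summing only shows that \emph{some} $y_i$ satisfies $\n(S(y_i,R))\ge c\,G(1,y_i)$; it says nothing about $\n(S(x,R))$ itself. The correct move is the reverse inclusion: choose $y$ on a geodesic ray from $1$ passing near $x$, with $d(x,y)$ bounded in terms of $R^\ast$, $R$ and $\d$, so that $S(y,R^\ast)\subset S(x,R)$ (by $\d$-thinness, any ray entering $B(y,R^\ast)$ has already passed within $R$ of $x$, at least once $R$ exceeds a fixed multiple of $\d$); then $\n(S(x,R))\ge\n(S(y,R^\ast))\ge c\,G(1,y)\asymp_{R}G(1,x)$ by the Harnack inequality (\ref{H}). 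You should also justify that $S(x,R)$ is non-empty for every $x$ at the bottom of the admissible range of $R$ (every $x$ lies within $O(\d)$ of a geodesic ray from $1$, so Lemma \ref{shadow} applies after enlarging $R$ by $O(\d)$); without this the claimed lower bound would fail for any $x$ whose shadow is empty. These are repairable, but as stated the small-$R$ step is a genuine gap.
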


\begin{remark}
Actually, they use the following notation in \cite{INO}: 
Denote by $U(\x, m)$ the set of all $\y$ in $\partial \G$ such that for arbitrary two geodesic rays $\g$ from $1$ converging to $\x$ and $\g'$ from $1$ converging to $\y$, respectively, one has $\lim_{n \to \infty}(\g(n)|\g'(n))>m$, where the sequence is nondecreasing, hence the limit exists.
They have shown that for every $\x$ in $\partial \G$ and every geodesic ray from $1$ towards $\x$, and for all $m\ge 0$, 
it holds that $\n(U(\x, m))\asymp F(1, \g(m))$.
The above statement is deduced by the relation: $U(\x, n-R+4\d) \subset S(\g(n), R) \subset U(\x, n-R)$, which follows by the $\d$-hyperbolicity, and by the Harnack inequality (\ref{H}).
\end{remark}

\section{Measure $\m_\th$}\label{HM}

First, we recall the Busemann function on $(\G, d)$.
For each $\x$ in $\partial \G$, take a geodesic ray $\g$ converging to $\x$.
Let
$$
b_\g (x,y):=\lim_{n \to \infty}\left(d(x, \g(n))-d(\g(n), y)\right),
$$
for $x, y$ in $\G$, where the limit exists since a sequence $d(x, \g(n))-n$ is decreasing by the triangular inequality.
The Busemann function of the point $\x$ is 
$$
b_\x(x,y)=\sup\left\{b_\g(x,y) \ | \ \text{geodesic rays $\g$ towards $\x$}\right\}.
$$
By the $\d$-hyperbolicity, it holds that
\begin{equation}\label{Busemann}
b_\g(x, y) \le b_\x(x,y) \le b_\g(x,y)+32\d,
\end{equation}
for every geodesic ray $\g$ towards $\x$, regardless of the starting point \cite[Chapitre 8]{GdlH}.

The function $\b$ defined in the following lemma will play an important role in this paper.

\begin{lemma}\label{beta}
For every $\th \in \R$, the limit 
$$
\b(\th):=\lim_{n\to \infty}\frac{1}{n}\log \sum_{x \in S_n}G(1, x)^\th
$$
exists, where $S_n$ is the set of words of length $n$ in $\G$.
Moreover, the function $\b$ is convex on $\R$. 
\end{lemma}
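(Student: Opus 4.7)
The plan is to apply Fekete's subadditivity lemma to the sequence $a_n := \sum_{x \in S_n} G(1,x)^\th$, after establishing an almost sub-multiplicative bound via the Ancona inequality (Theorem~\ref{Ancona}). Convexity of $\b$ will then follow from a straightforward H\"older estimate at each finite level, passing to the limit.

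The core estimate is $a_{m+n} \le C^{|\th|} a_m a_n$, where $C$ is the Ancona constant and is independent of $\th$. To prove it, fix $x \in S_{m+n}$ and choose (say, lexicographically) a geodesic $1 = x_0, x_1, \dots, x_{m+n} = x$. Set $z := x_m$ and $y := z^{-1} x$, so that $z \in S_m$, $y \in S_n$, and $x = zy$. Since $z$ lies on a geodesic from $1$ to $x$, Theorem~\ref{Ancona} gives
$$C^{-1} G(1,z) G(1,y) \le G(1,x) \le C\, G(1,z) G(1,y),$$
where I have used the left invariance $G(z,x) = G(1, z^{-1}x) = G(1,y)$. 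Applying the appropriate side of this comparison depending on the sign of $\th$, one obtains in all cases
$$G(1,x)^\th \le C^{|\th|} G(1,z)^\th G(1,y)^\th.$$
The assignment $x \mapsto (z(x), y(x))$ is an injection $S_{m+n} \hookrightarrow S_m \times S_n$, since $x = z(x) y(x)$ recovers $x$. Summing the previous inequality over $x \in S_{m+n}$ and enlarging the range of summation to all of $S_m \times S_n$ (all terms are nonnegative) yields $a_{m+n} \le C^{|\th|} a_m a_n$, as claimed.

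Finiteness and positivity of each $a_n$ are immediate from $|S_n| < \infty$ and from the admissibility of $\m$. To guarantee that $\b(\th) := \lim_n n^{-1} \log a_n$ is a real number, I also need a one-sided bound ruling out $-\infty$. For $\th \ge 0$, admissibility with finite support produces some $x_n \in S_n$ with $G(1,x_n) \ge c^n$, hence $a_n \ge c^{n\th}$. For $\th < 0$, the trivial bound $G(1,x) \le G(1,1)$ gives $a_n \ge G(1,1)^\th > 0$. Thus Fekete's lemma, applied to the subadditive sequence $\log(C^{|\th|} a_n)$, delivers existence of $\b(\th)$ as a finite number.

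For convexity, I apply H\"older's inequality at each finite level: for $\th = \lambda \th_1 + (1-\lambda)\th_2$ with $\lambda \in [0,1]$,
$$\sum_{x \in S_n} G(1,x)^\th = \sum_{x \in S_n} G(1,x)^{\lambda \th_1} G(1,x)^{(1-\lambda)\th_2} \le \Bigl(\sum_{x \in S_n} G(1,x)^{\th_1}\Bigr)^{\lambda} \Bigl(\sum_{x \in S_n} G(1,x)^{\th_2}\Bigr)^{1-\lambda}.$$
Taking $n^{-1} \log$ and letting $n \to \infty$ gives $\b(\th) \le \lambda \b(\th_1) + (1-\lambda)\b(\th_2)$. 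The only genuinely non-trivial step is the use of Ancona in the sub-multiplicativity bound; once that is in place, the rest is routine, so I do not anticipate a real obstacle in executing this plan.
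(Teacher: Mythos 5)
Your proof is correct and follows essentially the same route as the paper: the Ancona inequality yields submultiplicativity of $\sum_{x\in S_n}G(1,x)^\th$ up to a constant $C^{|\th|}$, Fekete's lemma gives existence of the limit, and a H\"older estimate at each finite level gives convexity. If anything, your H\"older step is more careful than the paper's one-line convexity claim, and your explicit lower bounds ruling out $\b(\th)=-\infty$ are a welcome detail the paper leaves implicit.
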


\begin{proof}
Let $G_\th(n):=\sum_{x \in S_n}G(1,x)^\th$. 
For every $\th \in \R$, by the Ancona inequality (\ref{A}), $G_\th(n)$ is submultiplicative up to a bounded constant; hence the limit of $(1/n)\log G_\th(n)$ exists. 
The convexity follows from
$G_{(1-t)\th_0+t\th_1}(n) \le G_{(1-t)\th_0}(n)G_{t\th_1}(n)$ for $0\le t \le 1$ and $\th_0, \th_1 \in \R$. 
\end{proof}

The function $\b$ is convex on $\R$, therefore it is continuous, and differentiable except for at most countably many points.
In fact, we will show that $\b$ is analytic except for at most finitely many points and continuously differentiable at every point (Proposition \ref{prss} and Corollary \ref{conti-diff}).
This is shown by identifying $\b(\th)$ with the pressure for certain Ruelle-Perron-Frobenius transfer operator, based on an automatic structure of the hyperbolic group $\G$ in Section \ref{Sym}.

Here we note that $\b$ has special values at $\th=0$ and $1$.

\begin{lemma}\label{th01}
It holds that $\b(0)=v$, i.e., the logarithmic volume growth of $(\G,d)$ and $\b(1)=0$.
Furthermore, $\b(\th)\to -\infty$ as $\th \to +\infty$, and $\b(\th) \to +\infty$ as $\th \to -\infty$.
\end{lemma}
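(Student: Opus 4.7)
The identity $\b(0)=v$ is nearly tautological: $\sum_{x\in S_n}G(1,x)^0=|S_n|$, so $\b(0)=\lim(1/n)\log|S_n|$. The sequence $|S_n|$ is submultiplicative---given $x\in S_{n+m}$ and any geodesic $1=z_0,z_1,\dots,z_{n+m}=x$, write $x=z_n\cdot(z_n^{-1}x)$ with $z_n\in S_n$ and $z_n^{-1}x\in S_m$---so the limit exists by Fekete's lemma, and comparison with $|B_n|=\sum_{k\le n}|S_k|$ identifies it with the logarithmic volume growth $v$.

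For $\b(1)=0$ the plan is a shadow-covering argument that reduces the sum to the total mass of $\n$. I would fix $R$ larger than a constant depending on $\d$, so that any two geodesic rays from $1$ to a common boundary point stay within Hausdorff distance $R$. Then for every $\x\in\partial\G$ and any geodesic ray $\g$ from $1$ to $\x$ one has $\x\in S(\g(n),R)$, so $\{S(x,R):x\in S_n\}$ covers $\partial\G$. Conversely, if $\x\in S(x,R)$ with $x\in S_n$, then, applying the defining property of the shadow to one fixed geodesic $\g$ from $1$ to $\x$, the element $x$ lies within distance $R$ of $\g(m)$ for some $m\in[n-R,n+R]$, so the cover has multiplicity at most $(2R+1)|B_R|$. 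Integrating $1\le \sum_{x\in S_n}\mathbf{1}_{S(x,R)}(\x)\le (2R+1)|B_R|$ against $\n$ and using $\n(S(x,R))\asymp_R G(1,x)$ from Theorem~\ref{Gibbs}, I obtain $\sum_{x\in S_n}G(1,x)\asymp 1$ uniformly in $n$, whence $\b(1)=0$.

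For the limits at $\pm\infty$ I would invoke the uniform exponential decay of the Green function: there exist $C>0$ and $\r\in(0,1)$ with $G(1,x)\le C\r^{|x|}$ for all $x\in\G$. This is a well-known consequence of non-amenability of $\G$ and can be obtained by iterating Ancona's inequality (Theorem~\ref{Ancona}) along a geodesic, once one knows that $G(1,z)\to 0$ as $|z|\to\infty$ (which itself follows from Theorem~\ref{Gibbs} combined with non-atomicity of $\n$). Combined with $|S_n|\le e^{vn+o(n)}$ from the first step, for $\th\ge 0$
\begin{equation*}
\sum_{x\in S_n}G(1,x)^\th\le |S_n|\cdot C^\th\r^{\th n}\le e^{(v+\th\log\r)n+o(n)},
\end{equation*}
so $\b(\th)\le v+\th\log\r\to -\infty$ as $\th\to+\infty$, since $\log\r<0$. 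For $\th\le 0$, raising $G(1,x)\le C\r^{|x|}$ to the negative exponent $\th$ reverses the inequality, and combined with $|S_n|\ge e^{vn-o(n)}$ this yields $\b(\th)\ge v+\th\log\r\to +\infty$ as $\th\to -\infty$.

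The step I expect to be most delicate is the uniform exponential decay of $G$: while standard, extracting it from tools stated in the excerpt requires either non-atomicity of $\n$ (not explicitly recorded earlier) or a spectral-radius estimate for non-amenable groups. By contrast, the shadow-covering computation yielding $\b(1)=0$ is routine once $R$ has been calibrated to $\d$, and the submultiplicativity argument behind $\b(0)=v$ is entirely elementary.
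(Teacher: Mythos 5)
Your proposal is correct, and for the first two assertions it follows the paper's own route: $\b(0)=v$ is read off from the definition (your submultiplicativity argument via splitting a geodesic at time $n$, plus the comparison of $|S_n|$ with $|B_n|$, just makes explicit what the paper leaves implicit), and $\b(1)=0$ is obtained exactly as in the paper by covering $\partial\G$ with the shadows $\{S(x,R)\}_{x\in S_n}$ with bounded multiplicity and invoking $\n(S(x,R))\asymp_R G(1,x)$ from Theorem~\ref{Gibbs}; the paper cites Coornaert for the bounded-overlap property, whereas your count of at most $(2R+1)|B_R|$ shadows containing a given $\x$ is a correct direct verification of the same fact.

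The only genuine divergence is in how you obtain the uniform exponential decay $G(1,x)\le C\r^{|x|}$ needed for the limits at $\pm\infty$. The paper gets it from the spectral radius: non-amenability gives $\limsup_n p_n(x,y)^{1/n}<1$, and since $\m$ has finite range the walk needs at least $d(x,y)/r$ steps to reach $y$, so summing the tail of the geometric series yields $G(x,y)\le Ce^{-\l d(x,y)}$; the conclusion $\b(\th)\le v-\th\l$ for $\th\ge 0$ and $\b(\th)\ge v-\th\l$ for $\th\le 0$ is then identical to yours. Your alternative --- iterate the Ancona inequality (Theorem~\ref{Ancona}) along a geodesic once one knows $\sup_{|z|=N}G(1,z)<1/C$ for some $N$, which you extract from Theorem~\ref{Gibbs}, Lemma~\ref{shadow} and non-atomicity of $\n$ --- is also valid, and you are right to flag non-atomicity as the one ingredient not recorded in the text (it is standard for stationary measures on the boundary of a non-elementary group, since any atom of maximal mass would have a finite $\G$-orbit). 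Your route trades Kesten's theorem and the finite-range observation for the Ancona machinery plus a soft fact about $\n$; the paper's route is the more economical of the two given that non-amenability is already assumed, but both are complete.
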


\begin{proof}
First, it follows that $\b(0)$ is just the logarithmic volume growth $v>0$ by definition.
Next, we show that $\b(1)=0$.
Note that the shadows $S(x, R)$ $(x \in S_n)$ cover the boundary $\partial \G$ for some fixed $R > 4\d$ with bounded overlaps, i.e., there is $N$ such that every $\x$ in the boundary is included in at most $N$ shadows of the form $S(x, R)$ with $|x|=n$ \cite[Lemme 6.5]{C}.
By Theorem \ref{Gibbs}, we have that 
$$
1=\n(\partial \G) \le \sum_{x \in S_n}\n(S(x, R)) \le C\sum_{x \in S_n}G(1, x).
$$
On the other hand, 
$$
\sum_{x \in S_n}G(1, x) \le C\sum_{x \in S_n}\n(S(x, R)) \le CN\n\left(\cup_{x \in S_n}S(x, R)\right) \le CN.
$$
Therefore, we obtain $\b(1)=0$.

Finally, let us show that $\b(\th) \to -\infty$ (resp.$+\infty$) as $\th \to +\infty$ (resp.$-\infty$).
For the random walk $(\G, \m)$, the spectral radius $\r$ for $\m$ is strictly less than $1$
and
the $n$-th step transition probability satisfies that $p_n(x, y) \le \r^n$. 
For a finite range random walk, it takes at least $d(x, y)/r$ steps to reach $y$ from $x$ for some constant $r>0$.
Therefore, the Green function is estimated by $G(x, y) \le Ce^{-\l d(x, y)}$ for constants $C>0$ and $\l > 0$.
For every positive $\th$, one has $\b(\th) \le v -\th \l$, hence $\b(\th)$ goes to $-\infty$ as $\th$ tends to $+\infty$.
On the other hand, for every negative $\th$, we have the converse inequality: $\b(\th) \ge v -\th \l$, and thus $\b(\th)$ goes to $+\infty$ as $\th$ tends to $-\infty$.
\end{proof}

We introduce one parameter family of measures $\m_\th$ on the boundary $\partial \G$; it interpolates the harmonic measure and a Patterson-Sullivan measure.
Recall that a Patterson-Sullivan measure $\m_0$ is given as a limit point of 
\begin{equation*}
\m_{s}:=\frac{\sum_{x \in \G}e^{-s|x|}\d_x}{\sum_{x \in \G}e^{-s|x|}}
\end{equation*}
as $s \downarrow v$
on the compactification $\G\cup \partial \G$.
The measure $\m_0$ is supported on the boundary $\partial \G$, and satisfies
\begin{equation*}
\frac{dg_\ast \m_0}{d\m_0}(\x) \asymp e^{-vb_\x(g, 1)}
\end{equation*}
for every $g$ in $\G$ and $\m_0$-almost every $\x$ in $\partial \G$.
One can show that all limit points are comparable with the $D$-Hausdorff measure; and thus they form a unique measure class.
This construction corresponds to the case where $\th=0$ in the following.

\begin{theorem}\label{FGPSmeas}
For every $\th \in \R$, there exists a probability measure $\m_\th$ on $\partial \G$ such that
\begin{equation}\label{RN}
\frac{dg_\ast \m_\th}{d\m_\th}(\x) \asymp K(g, \x)^\th e^{-\b(\th)b_\x(g, 1)}
\end{equation}
for every $g$ in $\G$ and $\m_\th$-almost every $\x$ in $\partial \G$.
\end{theorem}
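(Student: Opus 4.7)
The plan is to apply the Patterson--Sullivan construction, but with the reference weight $G(1,x)^\th$ replacing the uniform counting weight. For each $\th\in\R$, consider the Poincar\'e-type series
\[
Z_\th(s) := \sum_{x \in \G} G(1,x)^\th e^{-s|x|},
\]
whose abscissa of convergence is exactly $\b(\th)$ by Lemma~\ref{beta}. For $s>\b(\th)$, define the probability measure on $\G$
\[
\Pr_{\th,s} := \frac{1}{Z_\th(s)}\sum_{x\in\G} G(1,x)^\th e^{-s|x|}\, \d_x.
\]
If $Z_\th(\b(\th))=+\infty$, let $\m_\th$ be any weak-$\ast$ limit of $\Pr_{\th,s_n}$ along a sequence $s_n\downarrow\b(\th)$, taken inside the compactification $\G\cup\partial\G$; because all the mass escapes to infinity in this regime, $\m_\th$ is automatically supported on $\partial\G$. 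If $Z_\th(\b(\th))<+\infty$, I would apply the standard Patterson trick and multiply the weight by a slowly varying function $\f(|x|)$ chosen so that the modified series diverges at $s=\b(\th)$ without changing the critical exponent.

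Second, the quasi-invariance cocycle is immediate at the group level. By left $\G$-invariance of the transition probabilities, $G(1,g^{-1}y)=G(g,y)$, and therefore
\[
\frac{d g_\ast \Pr_{\th,s}}{d \Pr_{\th,s}}(y) \;=\; \left(\frac{G(1,g^{-1}y)}{G(1,y)}\right)^{\!\th} e^{-s(|g^{-1}y|-|y|)} \;=\; K(g,y)^\th\, e^{-s(d(g,y)-d(1,y))},
\]
and the auxiliary ratio $\f(|g^{-1}y|)/\f(|y|)$ arising in the convergence-type case tends uniformly to $1$, since $|g^{-1}y|$ and $|y|$ differ by at most $|g|$. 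Now pass to the limits $s\downarrow\b(\th)$ and $y\to\x\in\partial\G$ along a geodesic ray. The Martin kernel satisfies $K(g,y)\to K(g,\x)$ by the identification of the Martin boundary with $\partial\G$ and by Theorem~\ref{Holder}, while \eqref{Busemann} gives $d(g,y)-d(1,y)=b_\x(g,1)+O(\d)$. Since \eqref{RN} is required only up to a bounded multiplicative constant, these two facts suffice to produce the claimed cocycle.

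The main obstacle is making this limiting argument deliver \eqref{RN} as a genuine Radon--Nikodym statement on $\partial\G$, not merely as a formal limit of ratios. The cleanest route is to test the two measures $g_\ast\m_\th$ and $K(g,\cdot)^\th e^{-\b(\th)b_\cdot(g,1)}\,d\m_\th$ against indicator functions of shadows $S(\g(n),R)$ for a geodesic ray $\g$ from $1$ to $\x$, and to check that their ratios are bounded above and below by constants independent of $n$. For this, the H\"older continuity of $\log K(g,\cdot)$ supplied by Theorem~\ref{Holder}, together with the $\d$-hyperbolicity, controls $K(g,y)$ by $K(g,\x)$ uniformly in $y\in S(\g(n),R)$; the Ancona inequality \eqref{A} combined with a version of Theorem~\ref{Gibbs} adapted to the weight $G(1,x)^\th$ controls $\m_\th(S(\g(n),R))$ by $G(1,\g(n))^\th e^{-\b(\th)n}$ up to multiplicative constants; and Lebesgue differentiation along the shadow filtration then yields \eqref{RN}. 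Quantifying these shadow comparisons uniformly in $n$ is the crucial technical step, and it relies precisely on the strengthened Ancona estimate of Theorem~\ref{Holder}.
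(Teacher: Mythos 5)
Your proposal is correct and follows essentially the same route as the paper: the same weighted Poincar\'e series $\sum_x G(1,x)^\th e^{-s|x|}$ with critical exponent $\b(\th)$, the same weak limit (with Patterson's modification in the convergent case), and the same cocycle computation using $G(g,x)/G(1,x)\to K(g,\x)$ and $|g^{-1}x|-|x|=b_\x(g,1)+O(\d)$ near $\x$. Your final paragraph merely fleshes out, via shadows, the step the paper compresses into ``then the claim follows,'' which is the standard Patterson--Sullivan argument it cites from Coornaert.
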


\begin{proof}
The construction is analogous to the one of a Patterson-Sullivan measure on $\partial \G$ \cite[Th\'eor\`eme 5.4]{C}.
Notice that $\b(\th)$ is the exponent of convergence for 
$$
\sum_{x \in \G}G(1, x)^\th e^{-s|x|},
$$
i.e., it converges when $s > \b(\th)$ and diverges when $s < \b(\th)$.
Define the measure on $\G\cup \partial \G$ by
\begin{equation}\label{FGPSs}
\m_{\th, s}:=\frac{\sum_{x \in \G}G(1, x)^\th e^{-s|x|}\d_x}{\sum_{x \in \G}G(1, x)^\th e^{-s|x|}},
\end{equation}
where $\d_x$ denotes the Dirac measure at $x$.
First, suppose that $\sum_{x \in \G}G(1, x)^\th e^{-s|x|}$ diverges as $s \downarrow \b(\th)$.
In this case, since $\m_{\th, s}$ is supported on the compact space $\G \cup \partial \G$, as $s \downarrow \b(\th)$, there is a subsequencial weak limit which we denote by $\m_\th$. The limiting measure $\m_\th$ is supported on $\partial \G$ as 
the denominator of (\ref{FGPSs}) diverges.
For the left multiplication by $g$, the measure $\m_{\th, s}$ has the form:
\begin{align*}
g_\ast \m_{\th,s}(x)	&=\frac{G(1, g^{-1}x)^\th e^{-s|g^{-1}x|}}{\sum_{x \in \G}G(1, x)^\th e^{-s|x|}} \\
					&=\frac{G(1, g^{-1}x)^\th}{G(1, x)^\th}e^{-s(|g^{-1}x|-|x|)}\m_{\th, s}(x).
\end{align*}
Here $G(g, x)/G(1, x)$ goes to $K(g, \x)$ as $x$ tends to $\x$, and $|g^{-1}x|-|x|$ equals to $b_\x(g, 1)$ up to a constant depending only on $\d$ for every $x$ in a neighbourhood of $\x$ in $\G\cup \partial \G$ \cite[Lemme 2.2]{C}.
Then the claim follows.

Next, we consider the case where $\sum_{x \in \G}G(1, x)^\th e^{-s|x|}$ does not diverge as $s \downarrow \b(\th)$.
In this case, one can modify the series a little bit to make it diverge at the exponent of convergence \cite[Lemma 3.1]{P}.
There exists a positive increasing function $k$ on $[0, \infty]$ such that for any $\e>0$ there is $b_0$ satisfying that 
$k(ab)\le a^\e k(b)$ for $b > b_0$ and $a>1$, and 
$\sum_{x \in \G}k(e^{|x|})G(1, x)^\th e^{-s|x|}$ diverges as $s \downarrow \b(\th)$.
We define the measure $\m_{\th, s}^k$ with the modification by $k$ as in (\ref{FGPSs}). 
The rest follows in a similar way to the above.
\end{proof}

\begin{corollary}\label{FGPSth01}
For $\th=0$, a measure $\m_0$ is comparable with the $D$-Hausdorff measure $\H^{D}$ on $\partial \G$, 
where $D=v/\e$,
i.e., 
$
\m_0 \asymp \H^D.
$
For $\th=1$, a measure $\m_1$ coincides with the harmonic measure $\n$, i.e., $\m_1=\n$.
\end{corollary}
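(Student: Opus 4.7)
The plan is to handle the two values $\th=0$ and $\th=1$ separately, reading off what the construction of Theorem \ref{FGPSmeas} yields at these distinguished parameters.

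For $\th=0$, since $G(1,x)^0 \equiv 1$ and $\b(0)=v$ by Lemma \ref{th01}, the construction reduces to the classical Patterson--Sullivan recipe for the word metric with critical exponent $v$, and the quasi-invariance (\ref{RN}) specializes to
\begin{equation*}
\frac{dg_\ast \m_0}{d\m_0}(\x) \asymp e^{-v b_\x(g,1)},
\end{equation*}
i.e.\ $\m_0$ is a quasi-conformal density of dimension $v$ on $(\partial \G, d_\e)$ in the sense of Coornaert. It is a standard result of Coornaert \cite{C} that any such density is comparable with $\H^D$, where $D=v/\e$, giving $\m_0 \asymp \H^D$. (Alternatively, one can argue directly from the shadow lemma $\m_0(S(x,R)) \asymp e^{-v|x|}$ combined with Lemma \ref{shadow} to conclude $\m_0(B(\x, r)) \asymp r^D$, matching the $D$-Ahlfors regularity of $\H^D$.)

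For $\th=1$, by Lemma \ref{th01} the exponent of convergence is $\b(1)=0$, so in the construction we take $s \downarrow 0$. The crucial observation is that the computation
\begin{equation*}
\frac{dg_\ast \m_{1,s}}{d\m_{1,s}}(x) = \frac{G(1, g^{-1}x)}{G(1, x)} \, e^{-s(|g^{-1}x|-|x|)}
\end{equation*}
from the proof of Theorem \ref{FGPSmeas} acquires an \emph{exact} limit at $\th=1$: the first factor extends continuously to $K(g,\x)$ on $\partial \G$ by Theorem \ref{Holder}, while $|g^{-1}x|-|x|$ is uniformly bounded by $|g|$, so $e^{-s(|g^{-1}x|-|x|)}\to 1$ uniformly in $x$ as $s\downarrow 0$. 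Testing against continuous functions on the compactification $\G \cup \partial \G$ and using the weak convergence $\m_{1,s} \to \m_1$, one obtains the \emph{equality}
\begin{equation*}
\frac{dg_\ast \m_1}{d\m_1}(\x) = K(g, \x), \qquad g \in \G,\ \m_1\text{-a.e.}\ \x.
\end{equation*}
Next, the one-step decomposition $\sum_g \m(g) G(g, y) = G(1,y) - \d_1(y)$, divided by $G(1,y)$ and passed to the boundary (legitimate since $\m$ has finite support, so only finitely many Martin-kernel limits are involved), gives $\sum_g \m(g) K(g, \x) = 1$ for every $\x \in \partial \G$. Consequently, for every continuous $\f$ on $\partial \G$,
\begin{equation*}
\sum_g \m(g) \int \f \, dg_\ast \m_1 = \int \f(\x) \sum_g \m(g) K(g, \x) \, d\m_1(\x) = \int \f \, d\m_1,
\end{equation*}
so $\m_1$ is a $\m$-stationary probability measure on $\partial \G$. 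Uniqueness of the stationary measure (noted after (\ref{stationary})) then forces $\m_1=\n$.

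The only real subtlety is the $\th=1$ case: Theorem \ref{FGPSmeas} in general yields only an $\asymp$-relation, but the cocycle error $s(|g^{-1}x|-|x|)$ vanishes uniformly \emph{precisely because} $\b(1)=0$, upgrading quasi-invariance to a genuine Radon--Nikodym identity and thus unlocking the stationarity argument. The $\th=0$ case is standard and essentially a citation of Coornaert.
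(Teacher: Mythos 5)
Your proof is correct and follows essentially the same route as the paper: for $\th=0$ the Patterson--Sullivan/Coornaert comparison with $\H^D$, and for $\th=1$ the upgrade to the exact Radon--Nikodym identity $dg_\ast\m_1/d\m_1=K(g,\cdot)$ (valid precisely because $\b(1)=0$, so $s\downarrow 0$ kills the cocycle error), followed by stationarity and uniqueness of the $\m$-stationary measure. The only cosmetic difference is that you obtain $\sum_{g}\m(g)K(g,\x)=1$ from the first-step decomposition of the Green function, whereas the paper reads it off from the stationarity relation for $\n$ together with (\ref{Poisson}) and the continuity of the Martin kernel; both derivations are valid.
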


\begin{proof}
When $\th=0$, a measure $\m_0$ is nothing but a Patterson-Sullivan measure as we see in the construction, and thus $\m_0$ is comparable with the $D$-Hausdorff measure (\cite[Corollaire 7.5]{C}; see also \cite[Corollary 2.5.10]{Ca}).
When $\th=1$, in the proof of Theorem \ref{FGPSmeas}, in fact we have 
$$
\frac{dg_\ast \m_1}{d\m_1}(\x)=K(g, \x),
$$
since a parameter $s$ converges from above to $\b(1)$ which is equal to $0$ (Lemma \ref{th01}).
Recall that the harmonic measure $\n$ is the unique stationary measure on the boundary $\partial \G$ satisfying (\ref{stationary}).
By taking the Radon-Nikodym derivative of $g_\ast \n$ with respect to $\n$, noting (\ref{Poisson}), we have 
$
1=\sum_{g \in \G}\m(g)K(g, \x),
$
for $\n$-a.e. $\x$. By the continuity of the Martin kernel $K(g, \x)$ in $\x$, we deduce that a probability measure $\m_1$ is $\m$-stationary on $\partial \G$, hence it has to coincide with the harmonic measure $\n$.
\end{proof}

\begin{corollary}[A Gibbs property of $\m_\th$]\label{Gibbsth}
For every $\th \in \R$,
let $\m_\th$ be a probability measure satisfying (\ref{RN}) in Theorem \ref{FGPSmeas}: Then there exists $R_0$ such that for every $R \ge R_0$ and for every $g$ in $\G$,
$$
\m_\th\left(S(g, R)\right) \asymp_{\th, R} G(1, g)^\th e^{-\b(\th)|g|}.
$$
\end{corollary}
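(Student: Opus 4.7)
The approach is to deduce the Gibbs-type estimate directly from the quasi-invariance relation (\ref{RN}) of Theorem \ref{FGPSmeas}, following the template of the Patterson--Sullivan shadow lemma (the case $\th=0$) and of Theorem \ref{Gibbs} (the case $\th=1$). Applying (\ref{RN}) with $A=S(g,R)$,
\[
g_\ast \m_\th(S(g,R)) \asymp \int_{S(g,R)} K(g,\x)^\th e^{-\b(\th) b_\x(g,1)}\, d\m_\th(\x).
\]
For $\x \in S(g,R)$, every geodesic from $1$ to $\x$ passes within $R$ of $g$, so Ancona's inequality (Theorem \ref{Ancona}) together with the Harnack inequality (\ref{H}) gives $K(g,\x) \asymp_R 1/G(1,g)$, while (\ref{Busemann}) gives $b_\x(g,1) = -|g| + O_R(1)$. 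Substituting,
\[
g_\ast \m_\th(S(g,R)) \asymp_{\th,R} G(1,g)^{-\th} e^{\b(\th)|g|}\, \m_\th(S(g,R)).
\]
Since $g_\ast \m_\th$ is a probability measure, $g_\ast \m_\th(S(g,R)) \le 1$ immediately yields the upper bound $\m_\th(S(g,R)) \lesssim_{\th,R} G(1,g)^\th e^{-\b(\th)|g|}$.

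For the matching lower bound it suffices to prove $g_\ast \m_\th(S(g,R)) \ge c > 0$ uniformly in $g$, for all $R \ge R_0$. Rewrite $g_\ast \m_\th(S(g,R)) = \m_\th(g^{-1}S(g,R))$. A tripod approximation in the $\d$-hyperbolic space shows that $\y \in g^{-1}S(g,R)$ iff any geodesic from $g^{-1}$ to $\y$ enters $B(1,R+O(\d))$, which up to an additive $O(\d)$ is equivalent to $(\y|g^{-1})_1 \le R$. Hence
\[
\partial \G \setminus g^{-1}S(g,R) \subset V_R(g) := \{\y \in \partial\G : (\y|g^{-1})_1 > R - C\d\}.
\]
Any two points of $V_R(g)$ have Gromov product at least $R-C'\d$, so the $d_\e$-diameter of $V_R(g)$ is $\le C''e^{-\e R}$; in particular $V_R(g)$ is empty when $|g|<R-C\d$, and otherwise is contained in a single $d_\e$-ball of radius $C''e^{-\e R}$.

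It remains to bound $\sup_\y \m_\th(B(\y,C''e^{-\e R}))$ away from $1$ for $R$ large; this follows from a uniform modulus of non-atomicity of $\m_\th$. Non-atomicity is a consequence of (\ref{RN}): an atom $\{\x\}$ would, by quasi-invariance, produce atoms $\{h\x\}$ of mass $\asymp \m_\th(\{\x\})$ for every $h \in \G$, but for a non-elementary hyperbolic group the $\G$-orbit of any boundary point is infinite, contradicting $\m_\th(\partial\G)<\infty$. A standard compactness argument on the compact metric space $(\partial\G,d_\e)$ then upgrades non-atomicity to the uniform decay $\sup_\y \m_\th(B(\y,\rho)) \to 0$ as $\rho \to 0$; choosing $R$ large enough that $\sup_\y \m_\th(B(\y,C''e^{-\e R})) \le 1/2$ yields $\m_\th(g^{-1}S(g,R)) \ge 1/2$ uniformly in $g$ and concludes the proof. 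The main technical point is precisely this passage from non-atomicity to a uniform modulus, equivalently the uniform lower bound on $g_\ast \m_\th(S(g,R))$; as an alternative that avoids it, one could compute $\m_\th(S(g,R))$ via the pre-measures $\m_{\th,s}$ of (\ref{FGPSs}) directly, using the change of variable $y=g^{-1}x$ and Ancona factorization to reduce to showing that the weighted sum over $g^{-1}\tilde S(g,R)$ occupies a positive fraction of the partition function $Z_s$ uniformly in $g$ and in $s \downarrow \b(\th)$.
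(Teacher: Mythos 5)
Your proof is, up to its final step, the same as the paper's: you derive the key relation $g_\ast \m_\th\left(S(g,R)\right) \asymp_{\th,R} G(1,g)^{-\th}e^{\b(\th)|g|}\,\m_\th\left(S(g,R)\right)$ from (\ref{RN}) via the Ancona and Harnack inequalities and the Busemann estimate, read off the upper bound from $g_\ast\m_\th \le 1$, and reduce the lower bound to showing that $\partial\G\setminus g^{-1}S(g,R)$ has $d_\e$-diameter $O(e^{-\e R})$ and hence $\m_\th$-mass bounded away from $1$; this is exactly the route of \cite[Proposition 6.1]{C} followed in the paper, with your Gromov-product description of $g^{-1}S(g,R)$ playing the role of \cite[Lemme 6.3]{C}.

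The one step that does not hold up as written is the non-atomicity argument. From (\ref{RN}) an atom $\{\x\}$ only yields $\m_\th(\{h^{-1}\x\}) \asymp K(h,\x)^{-\th}e^{\b(\th)b_\x(h,1)}\,\m_\th(\{\x\})$, and the factor $K(h,\x)^{-\th}e^{\b(\th)b_\x(h,1)}$ depends on $h$ and is in no way bounded below along the orbit; an infinite orbit of atoms whose masses tend to $0$ is perfectly consistent with $\m_\th(\partial\G)=1$, so there is no contradiction with finiteness of the measure. Fortunately, full non-atomicity is not needed. It suffices that the largest point mass $m_0$ satisfies $m_0<1$: pick $m\in(m_0,1)$; a compactness argument gives $\rho_0>0$ with $\sup_{\y}\m_\th\left(B(\y,\rho_0)\right)\le m$ (otherwise a subsequential limit of centres of bad balls would be an atom of mass at least $m>m_0$), and then $\m_\th\left(g^{-1}S(g,R)\right)\ge 1-m>0$ once $R$ is large enough that the complement has diameter less than $\rho_0$. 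And $m_0<1$ is immediate: $m_0=1$ would force $\m_\th=\d_\x$, whence $g\x=\x$ for all $g$ by quasi-invariance, which is impossible since a non-elementary hyperbolic group fixes no point of $\partial\G$. So replace ``non-atomic'' by ``not a Dirac mass'' (and your threshold $1/2$ by any $m\in(m_0,1)$), and your proof closes; this is precisely how the paper handles the possible presence of atoms.
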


\begin{proof}
For every shadow $S(g, R)$, and for every $\x \in S(g, R)$, take an arbitrary geodesic ray $\g$ from $1$ converging to $\x$.
By the Ancona inequality (\ref{A}), for $|g|<m$,
we have
$$
G(1, \g(m)) \asymp G(1, \g(|g|))G(\g(|g|), \g(m)).
$$
Since $d(g, \g(|g|)) \le 2R$, by the Harnack inequality (\ref{H}), it follows that 
$G(1, \g(m)) \asymp_R G(1, g)G(g, \g(m))$.
Hence by letting $\g(m) \to \x$, 
we have
$G(1, g)K(g, \x) \asymp_R 1$.
On the other hand, 
one obtains $b_\x(g, 1)=-|g|+O(R, \d)$ for all $\x \in S(g, R)$ by the $\d$-hyperbolicity and by (\ref{Busemann}).
Since $\m_\th$ satisfies (\ref{RN}) in Theorem \ref{FGPSmeas}, 
\begin{equation*}\label{eqmth1}
\frac{dg_\ast \m_\th}{d\m_\th}(\x) \asymp_{\th,R} G(1, g)^{-\th} e^{\b(\th)|g|},
\end{equation*}
for all $\x \in S(g, R)$.
This estimate yields
\begin{equation}\label{eqmth2}
g_\ast \m_\th\left(S(g, R)\right) \asymp_R G(1, g)^{-\th} e^{\b(\th)|g|}\m_\th\left(S(g, R)\right).
\end{equation}
The rest is proceeded as in the proof of \cite[Proposition 6.1]{C}.
Let $0 \le m_0<1$ be the largest $\m_\th$ mass of the point in $\partial \G$, and fix $m$ such that $m_0<m<1$.
By the compactness of $\partial \G$, there exists $r_0>0$ small enough such that every ball of radius $r_0$ has the measure $\m_\th$ less than $m$.
Then there exists $R_0$ depending on $r_0$ such that for every $R \ge R_0$ and for every $g$, the diameter of $\partial \G \setminus g^{-1}S(g, R)$ is less than $r_0$ by \cite[Lemme 6.3]{C}.
Since $0<1-m \le \m_\th\left(g^{-1}S(g, R)\right)$, by (\ref{eqmth2}), it holds that
$\m_\th(S(g, R)) \asymp_R G(1, g)^{\th} e^{-\b(\th)|g|}$.
Thus the corollary follows.
\end{proof}

The following is suggested by Gou\"ezel \cite{G2}.
In particular, probability measures satisfying (\ref{RN}) form a unique measure class.

\begin{theorem}\label{uniqueness}
A probability measure satisfying (\ref{RN}) in Theorem \ref{FGPSmeas} for some $\th$ is doubling.
Arbitrary two probability measures satisfying (\ref{RN}) for the same $\th$ are mutually absolutely continuous and their densities are uniformly bounded from above and from below.
\end{theorem}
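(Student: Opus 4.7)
The plan is to reduce both claims to uniform estimates on shadows, via Corollary \ref{Gibbsth} together with Lemma \ref{shadow}. Fix $R \ge R_0$ large enough for Corollary \ref{Gibbsth} to apply. Given $\x \in \partial \G$ and small $r > 0$, choose a geodesic ray $\g$ from $1$ converging to $\x$. Lemma \ref{shadow} then produces integers $n^-(r)$ and $n^+(r)$, both of the form $-(\log r)/\e + O(1)$ with error bounded in terms of $\e$ and $R$ only, such that
$$S(\g(n^+(r)), R) \subset B(\x, r) \subset S(\g(n^-(r)), R),$$
and crucially $|n^\pm(2r) - n^\pm(r)|$ is bounded by a constant independent of $r$ and $\x$.

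First I would prove the doubling property. By Corollary \ref{Gibbsth},
$$\m_\th(B(\x, 2r)) \le \m_\th(S(\g(n^-(2r)), R)) \asymp_{\th, R} G(1, \g(n^-(2r)))^\th e^{-\b(\th)n^-(2r)},$$
$$\m_\th(B(\x, r)) \ge \m_\th(S(\g(n^+(r)), R)) \asymp_{\th, R} G(1, \g(n^+(r)))^\th e^{-\b(\th)n^+(r)}.$$
The two base points $\g(n^-(2r))$ and $\g(n^+(r))$ lie on the same geodesic at uniformly bounded graph-distance apart. The Harnack inequality (\ref{H}) controls the ratio $G(1, \g(n^-(2r)))/G(1, \g(n^+(r)))$ from both sides, so the sign of $\th$ is irrelevant, while the exponential factor $e^{-\b(\th)(n^-(2r) - n^+(r))}$ contributes only a bounded multiplicative constant. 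The resulting doubling constant depends on $\th$ and $R$ but not on $\x$ or $r$.

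For the uniqueness statement, let $\m_\th$ and $\m_\th'$ both satisfy (\ref{RN}) for the same value of $\th$. Corollary \ref{Gibbsth} applied to each gives $\m_\th(S(g, R)) \asymp \m_\th'(S(g, R))$ uniformly in $g$, since both sides are comparable to the same explicit quantity $G(1, g)^\th e^{-\b(\th)|g|}$. The shadow-ball sandwich above transports this to $\m_\th(B(\x, r)) \asymp \m_\th'(B(\x, r))$ uniformly in $\x$ and $r$. Since both measures are doubling by the first part, the Lebesgue-Besicovitch differentiation theorem for doubling metric measure spaces gives
$$\frac{d\m_\th}{d\m_\th'}(\x) = \lim_{r \to 0}\frac{\m_\th(B(\x, r))}{\m_\th'(B(\x, r))}$$
$\m_\th'$-almost everywhere, and this limit is uniformly bounded above and below, which is precisely the assertion.

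The main obstacle is setting up the shadow-ball sandwich so that the relevant offsets $|n^\pm(2r) - n^\pm(r)|$ and $|n^-(2r) - n^+(r)|$ are bounded by a constant independent of both $r$ and $\x$; once that uniformity is secured, Harnack and the Gibbs estimate combine mechanically to yield both the doubling bound and the pointwise comparison of densities. A minor care point is that $\th \in \R$ may be negative, so one genuinely needs two-sided Harnack control on $G(1, \cdot)$ along a bounded segment of $\g$, which (\ref{H}) supplies.
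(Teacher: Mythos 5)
Your argument is correct and follows essentially the same route as the paper: both parts rest on the Gibbs estimate of Corollary \ref{Gibbsth} combined with the shadow--ball comparison of Lemma \ref{shadow}, and your appeal to the differentiation theorem for doubling metric measure spaces is simply the packaged form of the Vitali covering argument (via \cite[Theorem 1.6]{H}) that the paper carries out by hand. The only quibble is notational: you write $d\m_\th/d\m_\th'$ before absolute continuity is established, whereas the logically clean order is that the uniform two-sided bound on ball ratios plus the covering theorem first yields $\m_\th(A)\asymp\m_\th'(A)$ for all Borel sets $A$, which then gives mutual absolute continuity with bounded densities.
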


\begin{proof}
Let $\m_\th$ be a probability measure satisfying (\ref{RN}).
We note that $\m_\th$ is doubling on shadows in the sense that:
$
\m_\th\left(S(g, 2R)\right) \asymp_{\th ,R} \m_\th\left(S(g, R)\right)
$
for all $g$ in $\G$ and for a fixed $R\ge R_0$ by Corollary \ref{Gibbsth}.
By using this estimate iteratively (if necessary) and by Lemma \ref{shadow}, one shows that $\m_\th$ is doubling, i.e., 
there exists a constant $C=C_{\th, R}$ such that
$
\m_\th\left(B(\x, 2r)\right) \le C\m_\th\left(B(\x, r)\right)
$
for all $\x$ in $\partial \G$ and for all $r\ge 0$.

In the same way, since $\m_\th\left(S(g, R)\right) \asymp_\th \m'_\th\left(S(g, R)\right)$ for arbitrary two probability measures $\m_\th$ and $\m'_\th$ satisfying (\ref{RN}),
two such measures are comparable on balls, i.e., $\m_\th\left(B(\x, r)\right) \asymp_\th \m'_\th\left(B(\x, r)\right)$.
For every Borel set $A$ in $\partial \G$, and for all small enough $r>0$, 
take a compact set $K$ and an open set $U$ such that
$K \subset A \subset U$, 
and
$\m_\th\left(U\setminus K\right)$ and $\m'_\th\left(U\setminus K\right)$ are less than $r$.
Since $K$ is compact, there exists $r_0>0$ such that all balls of radius less than $r_0$ centred in $K$ are included in $U$.
By the doubling property of $\m_\th$, the Vitali covering theorem (e.g.\ \cite[Theorem 1.6]{H}) implies that
there exist countable disjoint balls $\{B_i\}$ of radius less than $r_0$ centred in $K$ such that 
$
\m_\th\left(K \setminus \bigcup_i B_i \right)=0.
$
Therefore
$$
\m_\th(K) \le \sum_i \m_\th(B_i) \le C\sum_i \m'_\th(B_i) = C\m'_\th\left(\cup_i B_i\right) \le C\m'_\th(U).
$$
By letting $r$ tend $0$, one obtains 
$
\m_\th(A) \le C\m'_\th(A).
$
The reverse inequality is also obtained in the same way; 
we conclude the proof.
\end{proof}

Let us recall that a measure on the boundary $\partial \G$ is called ergodic for the action of $\G$, if every $\G$-invariant Borel set has the measure null or co-null.

\begin{corollary}\label{ergodic}
For every $\th \in \R$,
every probability measure $\m_\th$ satisfying (\ref{RN}) in Theorem \ref{FGPSmeas} is ergodic for the action of $\G$.
\end{corollary}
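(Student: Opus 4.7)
The approach will be to exploit the uniqueness up to bounded densities furnished by Theorem~\ref{uniqueness}, avoiding any direct Lebesgue differentiation argument on shadows.

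Let $A \subset \partial \G$ be a $\G$-invariant Borel set with $\m_\th(A) > 0$. The plan is to introduce the normalised restriction
$$
\m_\th^A := \frac{1}{\m_\th(A)}\, \m_\th|_A,
$$
observe that it is another probability measure on $\partial \G$ satisfying (\ref{RN}) for the same parameter $\th$, and then apply Theorem~\ref{uniqueness} to the pair $(\m_\th, \m_\th^A)$ to force $\m_\th(A^c)=0$.

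The only computation to carry out is the following: for every $g \in \G$ and every Borel $E \subset \partial \G$, the invariance $g^{-1}A = A$ gives
$$
g_\ast \m_\th^A(E) = \frac{\m_\th(g^{-1}(E \cap A))}{\m_\th(A)} = \frac{1}{\m_\th(A)}\int_E \mathbf{1}_A(\x)\,\frac{dg_\ast \m_\th}{d\m_\th}(\x)\,d\m_\th(\x),
$$
so that on $A$, which is conull for $\m_\th^A$, the Radon--Nikodym derivative $dg_\ast\m_\th^A/d\m_\th^A$ coincides $\m_\th^A$-almost everywhere with $dg_\ast\m_\th/d\m_\th$. Combined with (\ref{RN}) for $\m_\th$, this shows that $\m_\th^A$ satisfies (\ref{RN}) with the very same implicit multiplicative constants. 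Theorem~\ref{uniqueness} then yields $\m_\th \asymp \m_\th^A$; since $\m_\th^A$ vanishes on $A^c$ by construction, this forces $\m_\th(A^c) = 0$, i.e.\ $\m_\th(A) = 1$.

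No serious obstacle arises beyond this short verification: all the substantive work is already packaged in Theorem~\ref{uniqueness}, whose proof in turn rests on the Gibbs estimate of Corollary~\ref{Gibbsth} together with a Vitali covering argument. The only delicate point is to check that the quasi-invariance relation (\ref{RN}) transfers cleanly from $\m_\th$ to its normalised restriction $\m_\th^A$, and this is immediate once the $\G$-invariance $g^{-1}A = A$ is used.
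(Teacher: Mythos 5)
Your argument is correct and is essentially the paper's own proof: restrict $\m_\th$ to the invariant set $A$, check that the restriction still satisfies (\ref{RN}) for the same $\th$ (using $gA=A$), and invoke Theorem~\ref{uniqueness} to force $\m_\th(A^c)=0$. Your version is slightly more careful in that you normalise the restriction to a probability measure before applying Theorem~\ref{uniqueness} and you write out the Radon--Nikodym computation, but the route is identical.
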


\begin{proof}
Let $A$ be a Borel set which is $\G$-invariant in $\partial \G$, i.e., 
$gA=A$ for every $g$ in $\G$.
Suppose that $\m_\th(A)>0$.
Since the restriction $\m_\th|_A$ of $\m_\th$ to the set $A$ satisfies (\ref{RN}) for the same $\th$,
we have $\m_\th|_A \asymp \m_\th$, by Theorem \ref{uniqueness};
this implies that $\m_\th(\partial\G\setminus A)=0$. 
\end{proof}

The Gibbs property of $\m_\th$ provides the following strong estimate for $\sum_{x \in S_n}G(1, x)^\th$.

\begin{proposition}\label{comp}
For every $\th \in \R$, 
$$
\sum_{x \in S_n}G(1, x)^\th \asymp_\th e^{n\b(\th)}.
$$
\end{proposition}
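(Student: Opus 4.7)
The plan is to generalize the argument used in Lemma \ref{th01} (the case $\th=1$, where $\m_1=\n$) to arbitrary $\th$, by invoking the Gibbs-like property of $\m_\th$ established in Corollary \ref{Gibbsth}. The upper bound of the form $\sum_{x \in S_n} G(1,x)^\th \le C e^{n\b(\th)}$ is already essentially contained in Lemma \ref{beta}, since the Ancona inequality (\ref{A}) together with the Harnack inequality (\ref{H}) makes $G_\th(n):=\sum_{x \in S_n} G(1,x)^\th$ submultiplicative up to a multiplicative constant $C_\th$. Fekete's lemma applied to $C_\th G_\th(n)$ then yields not just the existence of $\b(\th)$ but also $G_\th(n) \le C'_\th e^{n\b(\th)}$ for all $n$.

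For the matching lower bound (and to get the upper bound by a symmetric argument), I would fix $R \ge R_0$ large enough so that Corollary \ref{Gibbsth} applies, giving
\[
\m_\th\bigl(S(g, R)\bigr) \asymp_{\th, R} G(1, g)^\th e^{-\b(\th)|g|}, \qquad g \in \G.
\]
Then I would use the key covering fact already invoked in the proof of Lemma \ref{th01}: for $R > 4\d$, the shadows $\{S(x, R)\}_{x \in S_n}$ cover $\partial \G$ with bounded multiplicity $N$ independent of $n$ (Lemme 6.5 of \cite{C}). Summing the Gibbs estimate over $x \in S_n$ yields
\[
1 = \m_\th(\partial \G) \le \sum_{x \in S_n} \m_\th\bigl(S(x, R)\bigr) \le N \cdot \m_\th\Bigl(\bigcup_{x \in S_n} S(x, R)\Bigr) \le N.
\]
Combining this with the two-sided estimate $\m_\th(S(x,R)) \asymp_{\th,R} G(1,x)^\th e^{-\b(\th) n}$ for $x \in S_n$ immediately gives
\[
C^{-1}_{\th, R}\, e^{n\b(\th)} \le \sum_{x \in S_n} G(1, x)^\th \le C_{\th, R} N\, e^{n\b(\th)},
\]
which is the claim.

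There is no real obstacle here beyond assembling the pieces correctly; the substantive work has already been done in Corollary \ref{Gibbsth}, whose proof in turn relies on the Ancona inequality and the quasi-invariance formula (\ref{RN}) of $\m_\th$. The only point worth being careful about is that the implied constants genuinely depend on $\th$ (and the fixed parameter $R$), which is why the statement is phrased with $\asymp_\th$ rather than a universal comparison. This uniform (in $n$) two-sided comparison is what later allows one to treat $\b(\th)$ as a bona fide ``pressure'' and to run the Legendre-transform arguments in Section \ref{Sym} and Section \ref{HS} without logarithmic correction terms.
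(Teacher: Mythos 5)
Your proof is correct and follows essentially the same route as the paper: both bounds are obtained by combining the Gibbs property of $\m_\th$ from Corollary \ref{Gibbsth} with the bounded-overlap covering of $\partial\G$ by the shadows $S(x,R)$, $x\in S_n$, exactly as in the proof of Lemma \ref{th01} for $\b(1)=0$. One small caveat: your side remark about Fekete's lemma is backwards --- submultiplicativity of $C_\th G_\th(n)$ gives $G_\th(n)\ge C_\th^{-1}e^{n\b(\th)}$ (the subadditive limit is an infimum), i.e.\ the \emph{lower} bound, not the upper one; this does not affect your argument, since the shadow-covering computation you carry out afterwards delivers both inequalities anyway.
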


\begin{proof}
The proof is similar to the one of Lemma \ref{th01} for $\b(1)=0$.
The shadows $S(x, R)$ $(x \in S_n \text{ and } R > \max\{4\d, R_0\})$ cover $\partial \G$ with bounded overlaps $N$.
From Corollary \ref{Gibbsth},
we have
\begin{align*}
1=\m_\th(\partial \G)	\le \sum_{x \in S_n}\m_\th\left(S(x, R)\right)
					\le C\sum_{x \in S_n}G(1,x)^\th e^{-n\b(\th)},
\end{align*}
and,
\begin{equation*}
\sum_{x \in S_n}G(1, x)^\th \le C\sum_{x \in S_n}e^{\b(\th)n}\m_\th\left(S(x, R)\right) \le CNe^{n\b(\th)}.
\end{equation*}
Thus we obtain the claim.
\end{proof}

\section{Symbolic dynamics}\label{Sym}

\subsection{Automatic structure}

Suppose that $S$ is a finite symmetric set of generators in $\G$.
An automaton is a directed graph $\A=(V, E, s_\ast)$
with a distinguished vertex $s_\ast$ called the initial state,
and a labeling on edges by generators $\a: E \to S$.
For a path $\o$ in the graph, a sequence of edges $e_0, e_1, \dots, e_{n-1}$, where the terminus of $e_i$ is the origin of $e_{i+1}$,
we associate a path $\a(\o)$ in the Cayley graph of $\G$
starting from the identity: $1, \a(e_0), \a(e_0)\a(e_1), \dots, \a(e_0)\a(e_1)\cdots \a(e_{n-1})$.
Denote by $\a_\ast(\o)$ the terminus of $\a(\o)$.

\begin{definition}
We say that a finitely generated group $\G$ has a {\it strongly Markov automatic structure} 
if there exists an automaton $\A=(V, E, s_\ast)$ such that
\begin{itemize}
\item[(1)] Every $v$ in $V$ can be reached by a path $\o$ starting from the initial state $s_\ast$.
\item[(2)] For every directed path $\o$ in $\A$, the path $\a(\o)$ is a geodesic in $\G$.
\item[(3)] The map $\a_\ast$ evaluating the terminus of the path is one-to-one and onto from the set of paths starting from $s_\ast$ to $\G$.
\end{itemize}
\end{definition}
By definition, the map $\a_\ast$ gives a bijection from the set of paths from $s_\ast$ of length $n$ to the sphere $S_n$ (the set of words of length $n$) in $\G$.

A theorem by Cannon ensures that every hyperbolic group has a strongly Markov automatic structure \cite{Can}.
In the sequel, we fix an automaton $\A$ for $\G$.
Let us denote by $\SS^\ast$ the set of finite paths in the graph $\A$ (not necessarily starting at $s_\ast$),
by $\SS$ the set of semi-infinite paths and by $\overline \SS=\SS^\ast \cup \SS$ the union of those sets.
We define the metric $d_{\overline \SS}(\o, \o')=2^{-n}$ in $\overline \SS$,
where $\o$ and $\o'$ coincide just until the $n$-th entry.
To a semi-infinite path in the graph $\A$,
we can associate a geodesic ray starting from the identity in the group $\G$,
and a point in the boundary as the extreme point.
The map $\a_\ast$ is extended from $\overline \SS$ to $\G\cup\partial \G$ in this way.
We note that the map $\a_\ast$ is continuous, in fact, H\"older.

\begin{lemma}\label{alphaH}
The map $\a_\ast: \overline \SS \to \G\cup\partial \G$ is H\"older continuous.
\end{lemma}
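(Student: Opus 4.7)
The plan is to show that two paths $\o,\o'\in\overline\SS$ whose first $n$ entries coincide have images in $\G\cup\partial\G$ that are exponentially close in the visual metric. Suppose $d_{\overline\SS}(\o,\o')=2^{-n}$. By the definition of the automatic structure, $\a$ sends any directed path to a geodesic in $\G$; in particular, the finite initial segments $e_0,\dots,e_{n-1}$ shared by $\o$ and $\o'$ produce the same geodesic segment of length $n$ from $1$ to a common vertex $x_n\in S_n$ in the Cayley graph. From $x_n$, the continuations of $\a(\o)$ and $\a(\o')$ are again geodesics (finite or infinite), possibly ending at points of $\G$ or of $\partial\G$.

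First I would establish the Gromov-product bound $(\a_\ast(\o)\mid\a_\ast(\o'))_1\ge n-C_\d$ for some constant $C_\d$ depending only on the hyperbolicity constant $\d$. In the group-valued case this is immediate, since any geodesic from $1$ to $\a_\ast(\o)$ and any geodesic from $1$ to $\a_\ast(\o')$ pass through $x_n$; in the boundary-valued case the inequality differs from $n$ by at most $2\d$, using the standard comparison between $\liminf(x_n\mid y_m)$ and $(\x\mid\y)$ recalled in Section \ref{P}. The point $x_n$ is a common ``almost-witness'' for the Gromov product regardless of whether the endpoints are group elements or boundary points.

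Next, I would invoke the visual metric estimate $d_\e(\x,\y)\le C e^{-\e(\x\mid\y)}$ on $\G\cup\partial\G$ (extended from $\partial\G$ via the extended Gromov product and bases of neighbourhoods $U_r(x)$ as described earlier). Combining with the previous step,
\[
d_\e(\a_\ast(\o),\a_\ast(\o'))\le Ce^{-\e(n-C_\d)}=C'e^{-\e n}=C'\bigl(d_{\overline\SS}(\o,\o')\bigr)^{\e/\log 2}.
\]
This gives H\"older continuity of $\a_\ast$ with explicit exponent $\e/\log 2$ and a constant depending only on $C,C_\d,\e$.

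The only mild subtlety is handling the mixed case where $\a_\ast(\o)\in\G$ and $\a_\ast(\o')\in\partial\G$ (or vice versa); here one has to be slightly careful with the extended Gromov product, but since $x_n$ lies on geodesics to each image, the inequality $(\a_\ast(\o)\mid\a_\ast(\o'))_1\ge n-C_\d$ still holds by choosing an eventual geodesic continuation past $x_n$ for the $\G$-valued image and taking $\liminf$ along the geodesic ray for the boundary-valued one. Beyond this bookkeeping, no further ingredient is needed: the proof is essentially a one-step combination of the geodesic property (2) of the automatic structure with the standard visual metric comparison.
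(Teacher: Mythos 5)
Your argument is correct and is exactly the reasoning behind the paper's one-line proof (``this follows from $d_\e(\x,\y)\le Ce^{-\e(\x|\y)}$''): shared initial segments of length $n$ force the Gromov product of the images to be at least $n-C_\d$, and the visual-metric upper bound converts this into the H\"older estimate with exponent $\e/\log 2$. No difference in approach; you have simply written out the details the paper leaves implicit.
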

\begin{proof}
This follows from $d_\e(\x, \y) \le Ce^{-\e(\x|\y)}$.
\end{proof}

Our analysis will be built on the structure of the graph $\A$.
A component is a maximal induced subgraph of $\A$ where every vertex is accessible from every other vertex by a finite directed path.
In general, an automaton has several distinct components which we have to deal with.
A surface group admits an automaton with rather simple structure:
one large component and just single vertices.
For general hyperbolic groups, however, it is not the case, and it is necessary to capture how different components interact each other.

\subsection{Thermodynamic formalism}

In this section, we show that $\b(\th)$ is the pressure for the Ruelle-Perron-Frobenius transfer operator for an appropriate subshift of finite type.
This enables us to prove that $\b(\th)$ is analytic except for at most finitely many points.
The subshift of finite type is constructed on an automatic structure of $\G$, where the thermodynamic formalism is established by Gou\"ezel \cite{G} based on the classical case.
It is not necessarily topologically mixing, or not even recurrent, either.
Below we employ the generalised thermodynamic formalism due to Gou\"ezel, following \cite[Section 3.3]{G}.

For a finite directed graph $\A$ and the set of finite or semi-infinite paths $\overline \SS$ in $\A$,
the shift $\s: \overline \SS \to \overline \SS$ is defined by deleting the first edge of a path.
Given every real-valued H\"older continuous function 
$\f: \overline \SS \to \R$ (a potential),
we define the transfer operator $\Lc_\f$ acting on continuous functions by
$$
\Lc_\f f(\o)=\sum_{\s(\o')=\o}e^{\f(\o')}f(\o'),
$$
where for the empty path $\o=\emptyset$,
we understand that the preimages of the shift $\s$ are taken by the nonempty ones.
The transfer operator encodes the Birkhoff sum $S_n\f(\o)$ in the form that
$$
\Lc_\f^n f(\o)=\sum_{\s^n(\o')=\o}e^{S_n\f(\o')}f(\o'), \text{ where } S_n\f(\o):=\f(\o)+\f(\s \o)+\cdots+\f(\s^{n-1}\o).
$$
The most fundamental case is where the graph $\A$ is topologically mixing, i.e.,
every vertex is accessible from every other vertex (then the graph is called recurrent),
and for arbitrary two vertices in the graph,
there is a path connecting those two vertices of length $n$ for all large enough $n$.
In this case, the Ruelle-Perron-Frobenius theorem describes the spectra of $\Lc_\f$ (\cite[Theorem 1.7]{B}, \cite[Theorem 2.2]{PP}, and \cite[Theorem 3.6]{G}).
In the case where the graph $\A$ is just recurrent,
there is a period $p>1$ such that every loop has the length proportional to $p$.
In this case, the set of vertices of $\A$ is decomposed into $p$ distinct subsets $V_j$ where an edge emanating from a vertex in $V_j$ has the terminus in $V_{j+1}$ for every $j \in \Z/p\Z$.
This decomposition is called a cyclic decomposition of $V$, and the restriction of $\s^p$ to each $V_j$ yields a subshift of finite type which is topologically mixing.
Consider the case where the graph $\A$ is not even recurrent.
Then we decompose the graph $\A$ into components.
To each component $\Cc$,
one can associate 
a transfer operator $\Lc_\Cc$ by restricting $\f$ to paths staying in $\Cc$.
Each transfer operator $\Lc_\Cc$ has finitely many eigenvalues of maximal modulus $e^{Pr_\Cc(\f)}$,
for some real number $Pr_\Cc(\f)$ (the pressure), and they are all simple and isolated.
Define by $Pr(\f)=\max_{\Cc}Pr_\Cc(\f)$
the maximum of the pressure over all components.
A component $\Cc$ is called maximal if $Pr_\Cc(\f)=Pr(\f)$.
The asymptotic behaviour of $\Lc_\f^n$ is controlled by maximal components.

\begin{definition}
The potential $\f$ is called {\it semisimple} if there are no directed paths between arbitrary two distinct maximal components.
\end{definition}

In the case where the potential $\f$ is semisimple, the dominating terms of $\Lc_\f^n$ are fairly well decomposed as in the following:
Let denote by {\it H\"ol} the space of H\"older continuous functions with some fixed exponent with the norm $\|\cdot\|$.

\begin{theorem}[\cite{G}, Theorem 3.8]\label{Gouezel}
Suppose that a potential $\f$ is semisimple.
Denote by $\Cc_1, \dots, \Cc_I$ the maximal components, with the corresponding period $p_i$, and take for each $i$ a cyclic decomposition 
$
\Cc_i=\bigsqcup_{j \in \Z/p_i\Z}\Cc_{i,j}.
$
Then there exist H\"older continuous functions $h_{i,j}$ and measures $\l_{i, j}$ with $\int h_{i,j}d\l_{i, j}=1$ such that for every H\"older continuous function $f$,
\begin{equation*}
\left\|\Lc_\f^n f-e^{nPr(\f)}\sum_{i=1}^{I}\sum_{j=0}^{p_i-1}\left(\int fd\l_{i, (j-n \ {\rm mod} \ p_i)}\right)h_{i,j}\right\|\le C\|f\|e^{-n\e_0}e^{nPr(\f)}.
\end{equation*}
The probability measures 
$\m_i=\frac{1}{p_i}\sum_{j=0}^{p_i-1}h_{i,j}\l_{i,j}$ are invariant under $\s$ and ergodic (but not mixing in general).

Denote by $\Cc_{\rightarrow, i, j}$ the set of edges from which $\Cc_{i, j}$ is accessible with a path of length proportional to $p_i$,
and by $\Cc_{i, j, \rightarrow}$ the set of edges which can be accessible from $\Cc_{i, j}$ by a path of length proportional to $p_i$.
The function $h_{i, j}$ is bounded from below on paths starting with edges in $\Cc_{i, j, \rightarrow}$ 
and the empty path,
and vanishes elsewhere.
The support of the measure $\l_{i, j}$ is the set of infinite paths starting with edges in $\Cc_{\rightarrow, i, j}$
and eventually staying in $\Cc_i$.
\end{theorem}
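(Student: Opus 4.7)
The strategy is to reduce the asymptotic analysis of $\Lc_\f^n$ on the full path space to the classical Ruelle--Perron--Frobenius (RPF) theory on each maximal component, and then control all the remaining contributions using a spectral gap produced by the pressure inequality $Pr_\Cc(\f)<Pr(\f)$ on non-maximal components. The semisimple hypothesis is what prevents the various maximal contributions from interacting and producing polynomial (Jordan-block) factors in $n$.

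First I would handle a single maximal component $\Cc_i$ of period $p_i$. Restricting the graph to $\Cc_i$ gives a recurrent finite graph; its cyclic decomposition $\Cc_i=\bigsqcup_{j\in\Z/p_i\Z}\Cc_{i,j}$ turns $\s^{p_i}$ into a topologically mixing subshift of finite type on the space of semi-infinite paths starting in $\Cc_{i,j}$. Applying the classical RPF theorem to $\Lc_\f^{p_i}$ on each mixing piece produces a simple isolated leading eigenvalue $e^{p_iPr(\f)}$ with a strictly positive H\"older eigenfunction $\widetilde h_{i,j}$ and a probability eigenmeasure $\widetilde\l_{i,j}$, normalized by $\int \widetilde h_{i,j}\,d\widetilde\l_{i,j}=1$, together with a spectral gap. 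Cycling by $\s$ permutes the pieces, so altogether $\Lc_\f$ itself, restricted to paths eventually trapped in $\Cc_i$, has $p_i$ leading eigenvalues $e^{Pr(\f)}\zeta$ with $\zeta^{p_i}=1$.

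Next I would extend each eigenfunction $\widetilde h_{i,j}$ to the whole path space $\overline\SS$. For a path $\o$ starting with an edge in $\Cc_{i,j,\rightarrow}$ (so that some iterate $\s^k\o$ lands in $\Cc_{i,j}$ along a path of appropriate residue mod $p_i$), one pulls back the eigenfunction by $\s^k$ using a normalized weighted sum over the finitely many extensions; the H\"older regularity of $\f$ and the exponential contraction of the shift on the cylinder structure make the sum absolutely convergent and H\"older. Set $h_{i,j}$ equal to this extension on $\Cc_{i,j,\rightarrow}\cup\{\emptyset\}$ and zero elsewhere, and similarly construct $\l_{i,j}$ as the forward image of $\widetilde\l_{i,j}$ concentrated on paths whose initial edge lies in $\Cc_{\rightarrow,i,j}$ and which eventually stay in $\Cc_i$.

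With these building blocks in hand, I would decompose an arbitrary initial path according to the first time it enters, and then remains in, a maximal component, and split $f$ accordingly. Contributions from paths that stay forever outside every maximal component, or wander only through non-maximal components, are bounded by $C\|f\|e^{n(Pr(\f)-\e_0)}$ because each non-maximal component's transfer operator has spectral radius at most $e^{Pr_\Cc(\f)}$ with $Pr_\Cc(\f)<Pr(\f)$, and there are only finitely many components. The semisimplicity assumption forbids a directed path from one maximal component to another, so each trajectory is absorbed into at most one maximal $\Cc_i$; this is precisely what prevents convolution-type polynomial factors in $n$ and reduces the dominant asymptotics to the disjoint sum over $i$ and $j$, yielding the stated formula, the invariance and ergodicity of $\m_i=\frac1{p_i}\sum_j h_{i,j}\l_{i,j}$ being a direct consequence of the RPF construction on the mixing blocks.

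The main obstacle is controlling this ``first capture'' decomposition quantitatively: one must show that, uniformly in the starting point, the weight of trajectories that delay entering some $\Cc_i$ up to time $k$ is summable against $e^{-k\e_0}$, so that the extension of $h_{i,j}$ by backward sums and the error term in the final asymptotics share the same exponential rate. This requires combining the pressure gap on non-maximal components with the H\"older estimates for $\f$ along long excursions, exactly in the way carried out in \cite[Section 3.3]{G}.
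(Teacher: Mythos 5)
This statement is quoted verbatim from Gou\"ezel \cite[Theorem 3.8]{G}; the paper you are working from states it as an imported result and gives no proof of its own, so there is nothing internal to compare your argument against. Judged on its own merits, your sketch does capture the architecture of the cited proof: classical Ruelle--Perron--Frobenius theory applied to $\Lc_\f^{p_i}$ on the topologically mixing blocks of each maximal component, a spectral-gap bound $Pr_\Cc(\f)<Pr(\f)$ absorbing all non-maximal contributions into the $O(e^{n(Pr(\f)-\e_0)})$ error, and semisimplicity ruling out paths between distinct maximal components and hence the polynomial factors $n^{k-1}$ that would otherwise appear (exactly the mechanism of Lemma \ref{semisimple}).

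There is, however, a directional error in your extension step. With the conventions of this paper the shift $\s$ deletes the \emph{first} edge, and $\Lc_\f$ sums over preimages of $\s$, i.e.\ over paths obtained by \emph{prepending} an edge. Consequently, if $\o$ starts with an edge in $\Cc_{i,j,\rightarrow}$ (edges accessible \emph{from} $\Cc_{i,j}$, hence downstream of it), then $\s^k\o$ moves further downstream and can never ``land in $\Cc_{i,j}$'' as you claim; since the component graph has no loops, a path cannot re-enter a component it has left. The eigenfunction $h_{i,j}$ is extended to its downstream support by iterating the transfer operator itself (prepending long paths that traverse $\Cc_{i,j}$), whereas the description ``$\s^k\o$ lands in $\Cc_{i,j}$'' is the correct picture for the support of the eigen\emph{measure} $\l_{i,j}$, which lives upstream on $\Cc_{\rightarrow,i,j}$ and consists of paths eventually absorbed into $\Cc_i$. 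You state the correct supports in your conclusion, but the construction you describe does not produce them. Beyond this, the quantitative heart of the argument --- the uniform summability of the weight of trajectories that delay absorption, needed both for the convergence of the extensions and for the uniform exponential error term --- is deferred to \cite[Section 3.3]{G} rather than carried out, so the proposal remains a plausible outline rather than a proof.
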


To each component $\Cc$ (not necessarily maximal), we associate the space of infinite paths staying in $\Cc$, which we denote by $\SS_\Cc$.
There is the Gibbs measure $\m_\Cc$ on $\SS_\Cc$, i.e.,
\begin{equation}
\m_\Cc \left([\o_0, \dots, \o_{n-1}]\right) \asymp e^{-nPr_\Cc(\f)+S_n\f(\o)}
\end{equation}
for every $\o$ in $\SS_\Cc$, 
where $\m_\Cc$ is $\s$-invariant, having the form $(1/p_\Cc)\sum_{j=0}^{p_\Cc-1}h_{\Cc, j}\l_{\Cc, j}$
by abusing the notation in Theorem \ref{Gouezel}.
We formulate a variational principle on each $\SS_\Cc$.
For a finite Borel measurable partition $\Dc$ of $\SS_\Cc$ and a measure $m$ on it, one defines the entropy 
$
H_m(\Dc)=-\sum_{D \in \Dc}m(D)\log m(D).
$
Define the Borel measurable partition of $\SS_\Cc$ by $\Uc:=\{U_x\}_{x \in \Cc}$, where $U_x:=\{\o \in \SS_\Cc \ | \ \o_0=x\}$.
For a $\s$-invariant probability measure $m$ on $\SS_\Cc$, the entropy of $m$ is defined by
$$
h_m(\s)=\lim_{n \to \infty}\frac{1}{n}H_m\left(\Uc\vee \s^{-1}\Uc\vee \cdots \vee \s^{-n+1}\Uc\right),
$$
where $\Dc_1\vee \Dc_2$ denotes the partition refined by the partitions $\Dc_1$ and $\Dc_2$, and the limit exists for every $\s$-invariant probability measure $m$ \cite[Lemma 1.19]{B}.
The following corresponds to \cite[Theorem 1.22]{B}, where the claim is stated for a topologically mixing subshift of finite type, but a recurrent case follows from the same line of the proof.

\begin{proposition}[Variational principle]\label{vp}
For a H\"older continuous potential $\f$ (not necessarily semisimple), and for each component $\Cc$, one has
\begin{equation*}
\max\left\{h_m(\s)+\int_{\SS_\Cc}\f dm \right\}=Pr_\Cc(\f),
\end{equation*}
where the maximum is taken over all $\s$-invariant probability measures $m$ on $\SS_\Cc$ and attained by the Gibbs measure $\m_\Cc$.
\end{proposition}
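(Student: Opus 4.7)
The plan is to follow Bowen's two-step proof of the variational principle for topologically mixing subshifts of finite type \cite[Theorem 1.22]{B} and adapt it to a single recurrent component, reducing if necessary to $\s^{p_\Cc}$ on one cyclic sub-component of $\Cc$, where the mixing hypothesis is satisfied. The two steps are the upper bound $h_m(\s) + \int_{\SS_\Cc}\f\,dm \le Pr_\Cc(\f)$ for every $\s$-invariant probability measure $m$ on $\SS_\Cc$, and the equality at $m = \m_\Cc$, which shows the supremum is attained and equals $Pr_\Cc(\f)$.

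For the upper bound, fix $m$ and consider the partition $\Uc_n := \bigvee_{k=0}^{n-1}\s^{-k}\Uc$, whose atoms are the nonempty cylinders $[x_0,\ldots,x_{n-1}]$ in $\SS_\Cc$. H\"older continuity of $\f$ implies that $S_n\f$ has oscillation bounded by an absolute constant on each atom, uniformly in $n$. Applying the standard concavity inequality $\sum_i p_i(-\log p_i + a_i) \le \log \sum_i e^{a_i}$ with $p_i = m([x_0,\ldots,x_{n-1}])$ and $a_i = \sup_{[x_0,\ldots,x_{n-1}]} S_n\f$ then gives
\begin{equation*}
H_m(\Uc_n) + \int_{\SS_\Cc} S_n\f\,dm \le \log \sum_{[x_0,\ldots,x_{n-1}]} e^{\sup S_n\f} + O(1).
\end{equation*}
The right-hand side behaves like $n\,Pr_\Cc(\f) + O(1)$, because $\sum_{[x_0,\ldots,x_{n-1}]} e^{S_n\f}$ is (up to the H\"older oscillation) the evaluation of $\Lc_\Cc^n \mathbf{1}$ at any fixed vertex, and $Pr_\Cc(\f)$ is by definition the logarithm of the spectral radius of $\Lc_\Cc$. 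Dividing by $n$, using $\s$-invariance of $m$ to rewrite $\int S_n\f\,dm = n \int \f\,dm$, and sending $n \to \infty$ completes this step.

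For the lower bound, the Gibbs property $\m_\Cc([\o_0,\ldots,\o_{n-1}]) \asymp e^{-n\,Pr_\Cc(\f) + S_n\f(\o)}$ stated just before the proposition is used directly. Taking $-\log$, integrating over $\m_\Cc$, and applying H\"older continuity of $S_n\f$ on cylinders together with $\s$-invariance to replace $S_n\f(\o)$ by $n \int \f\,d\m_\Cc$ up to $O(1)$ yields
\begin{equation*}
H_{\m_\Cc}(\Uc_n) = n\,Pr_\Cc(\f) - n\int_{\SS_\Cc}\f\,d\m_\Cc + O(1).
\end{equation*}
Dividing by $n$ and passing to the limit gives $h_{\m_\Cc}(\s) + \int \f\,d\m_\Cc = Pr_\Cc(\f)$, so $\m_\Cc$ realises the supremum.

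The main obstacle is justifying the partition-function asymptotic $\log\sum_{[x_0,\ldots,x_{n-1}]} e^{\sup S_n\f} = n\,Pr_\Cc(\f) + O(1)$ for \emph{every} $n$ when $\Cc$ has period $p_\Cc > 1$: a naive argument gives this only along $n \in p_\Cc\Z$, since cylinders linking prescribed endpoints may be empty otherwise. The cleanest remedy is to apply the mixing variational principle of \cite[Theorem 1.22]{B} to $(\s^{p_\Cc}, S_{p_\Cc}\f)$ on each cyclic sub-component $\SS_{\Cc_j}$, where the pressure equals $p_\Cc\,Pr_\Cc(\f)$ by the Ruelle--Perron--Frobenius description of the top spectrum of $\Lc_\Cc$, and then transfer the statement back to $(\s, \f, \SS_\Cc)$ via $h_m(\s^{p_\Cc}) = p_\Cc h_m(\s)$ and $\int S_{p_\Cc}\f\,dm = p_\Cc \int \f\,dm$ for any $\s$-invariant $m$.
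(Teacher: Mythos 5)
Your proposal is correct and is exactly the route the paper takes: the paper gives no details beyond citing Bowen's Theorem 1.22 and asserting that ``a recurrent case follows from the same line of the proof,'' and your two-step argument (the concavity inequality on cylinder partitions for the upper bound, the Gibbs property of $\m_\Cc$ for attainment, with the cyclic decomposition handling period $p_\Cc>1$) is precisely that line of proof written out. One small remark: the periodicity issue you flag as the main obstacle is harmless even without passing to $\s^{p_\Cc}$, since the upper bound only needs $\limsup_n \frac{1}{n}\log\sum e^{\sup S_n\f}\le Pr_\Cc(\f)$ and the attainment step uses the Gibbs property directly rather than the partition-function asymptotics.
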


In particular, $Pr(\f)$ is attained by the Gibbs measure corresponding to a maximal component.
The following proposition describes how the transfer operator behaves under perturbations of the potential.

\begin{proposition}[\cite{G}, Proposition 3.10]\label{pert}
Assume that a potential $\f$ in {\it H\"ol} is semisimple, and $\Cc_1, \dots, \Cc_I$ are the corresponding maximal components as in Theorem \ref{Gouezel}.
There exist $\e_0>0$ and $C>0$ such that
for all $\p$ small enough in {\it H\"ol},
there exist H\"older continuous functions $h_{i, j}^\p$ and measures $\l_{i, j}^\p$ with the same support as $h_{i, j}$ and $\l_{i, j}$, respectively, and real numbers $Pr_{\Cc_i}(\f+\p)$ satisfying that for every $f$ in {\it H\"ol},
\begin{equation*}
\left\|\Lc_{\f+\p}^n f - \sum_{i=1}^{I}e^{nPr_{\Cc_i}(\f+\p)}\sum_{j=0}^{p_i-1}\left(\int fd\l^\p_{i, (j-n \ {\rm mod} \ p_i)}\right)h^\p_{i,j}\right\|\le C\|f\|e^{-n\e_0}e^{nPr(\f)}.
\end{equation*}
The maps $\p \mapsto Pr_{\Cc_i}(\f+\p)$, $\p \mapsto h^\p_{i, j}$ and $\p \mapsto \l^\p_{i, j}$ are analytic in the norm sense from a small ball centred at $0$ in {\it H\"ol} to $\R$, {\it H\"ol} and the dual of {\it H\"ol}, respectively.
Moreover, 
\begin{equation*}
Pr_{\Cc_i}(\f+\p)=Pr(\f)+\int\p d\m_i+O\left(\|\p\|^2\right),
\end{equation*}
where $\m_i=\frac{1}{p_i}\sum_{j=0}^{p_i-1}h_{i, j}\l_{i, j}$.
\end{proposition}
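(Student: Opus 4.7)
The plan is to apply Kato's analytic perturbation theory to the transfer operator $\Lc_{\f+\p}$, viewed as an analytic family of bounded operators on {\it H\"ol} parametrised by $\p$. The starting observation is the factorisation
\[
\Lc_{\f+\p}f=\Lc_\f(e^\p f),
\]
so $\Lc_{\f+\p}=\Lc_\f\circ M_{e^\p}$, where $M_{e^\p}$ denotes multiplication by $e^\p$. Since {\it H\"ol} is a Banach algebra under the chosen norm, $\p\mapsto e^\p$ is analytic from {\it H\"ol} into itself, and hence $\p\mapsto\Lc_{\f+\p}$ is analytic from a neighbourhood of $0$ in {\it H\"ol} into the bounded operators on {\it H\"ol}.

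Next I would invoke Theorem \ref{Gouezel} to extract the spectral structure of $\Lc_\f$. The semisimplicity assumption implies that $\Lc_\f$ decomposes block-diagonally on invariant subspaces attached to the maximal components $\Cc_i$, and that on each such block the leading eigenvalues are the $p_i$-th roots of unity multiplied by $e^{Pr(\f)}$, all simple and isolated, while the rest of the spectrum lies in a disk of radius $e^{Pr(\f)-\e_0}$. Standard analytic perturbation theory then produces, for $\|\p\|$ small, analytic continuations $\l_{i,j}(\p)$ of these leading eigenvalues together with analytic families of rank-one spectral projectors $P_{i,j}(\p)$; writing $|\l_{i,j}(\p)|=e^{Pr_{\Cc_i}(\f+\p)}$ defines the required analytic function $Pr_{\Cc_i}$, and extracting the one-dimensional ranges and cokernels of the projectors yields the analytic families $h^\p_{i,j}$ and $\l^\p_{i,j}$. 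The supports of $h^\p_{i,j}$ and $\l^\p_{i,j}$ match those of $h_{i,j}$ and $\l_{i,j}$ because the characterisation of these supports in Theorem \ref{Gouezel} is a combinatorial statement about accessibility in the fixed graph $\A$, which is independent of the weights.

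The remainder estimate follows from stability of the spectral gap: for $\|\p\|$ small enough, the part of the spectrum of $\Lc_{\f+\p}$ away from the perturbed leading eigenvalues remains inside a disk of radius strictly less than $e^{Pr(\f)-\e_0/2}$, yielding the exponential bound on $\Lc_{\f+\p}^n-\sum_{i,j}\l_{i,j}(\p)^nP_{i,j}(\p)$. For the first-order expansion $Pr_{\Cc_i}(\f+\p)=Pr(\f)+\int\p\,d\m_i+O(\|\p\|^2)$, I would differentiate the eigenvalue relation $\Lc_{\f+t\p}h^{t\p}_{i,j}=\l_{i,j}(t\p)h^{t\p}_{i,j}$ at $t=0$, pair with $\l_{i,j}$, use the normalisation $\int h_{i,j}\,d\l_{i,j}=1$, and average over the cyclic classes to convert the resulting pairings into integration against $\m_i=(1/p_i)\sum_j h_{i,j}\l_{i,j}$. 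The main obstacle I anticipate is justifying that the supports of the perturbed eigenfunctions and eigenmeasures are preserved \emph{exactly} and not merely up to small error; this reduces to showing that the perturbed spectral projectors still respect the block-triangular decomposition of $\Lc_\f$ induced by the component structure of $\A$, a combinatorial feature of the graph that is left untouched by perturbation of the weights.
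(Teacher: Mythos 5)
Your plan follows essentially the same route as the source this proposition is quoted from (\cite{G}, Proposition 3.10); the paper itself does not reprove the statement and only remarks that the proof rests on the decomposition on each component, which is exactly your per-block Kato perturbation argument combined with the analyticity of $\p\mapsto\Lc_{\f+\p}=\Lc_\f\circ M_{e^\p}$. One caution: the decomposition of $\Lc_\f$ induced by the component structure is block-\emph{triangular}, not block-diagonal, and the leading eigenvalue of the full operator is degenerate (one copy for each maximal component and each cyclic class), so simple-eigenvalue perturbation theory must be applied to the restrictions $\Lc_{\Cc_i}$ --- where $e^{p_iPr_{\Cc_i}(\f)}$ is a simple isolated eigenvalue of $\Lc_{\Cc_i}^{p_i}$ on each cyclic class --- rather than to $\Lc_\f$ itself, since analytic splitting of a degenerate eigenvalue under a Banach-space-valued perturbation is not automatic; the full expansion, including the fact that $h^\p_{i,j}$ lives on the larger set $\Cc_{i,j,\rightarrow}$ and that paths transiting through non-maximal components contribute only $O(e^{n(Pr(\f)-\e_0)})$, is then recovered by summing over the finitely many component itineraries, with semisimplicity ruling out itineraries meeting two maximal components and hence the polynomial factors of Lemma \ref{semisimple}. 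You flag precisely this reassembly as the main obstacle, and your proposed resolution --- that the block-triangular structure is a combinatorial feature of $\A$ untouched by perturbing the weights --- is the correct one.
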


The proof is based on the decomposition on each component.
Note that the pressures $Pr_{\Cc_i}(\f+\p)$ are possibly different.
One can also show that $\f+\p$ is semisimple for all small enough $\p$,
since the pressures of $\f+\p$ on the components other than the maximal ones are bounded away from $Pr(\f)$.
This implies that the maximal components of $\f+\p$ appear within those of $\f$ for all small enough $\p$.

We will need the following lemma to determine whether the potential is semisimple or not.
Let $1_{[E_\ast]}$ be the indicator function on $\overline \SS$ defined on the set of paths starting at the initial state $s_\ast$.
By the definition of the transfer operator, we have
$
\Lc_\f^n1_{[E_\ast]}(\emptyset)=\sum e^{S_n \f(\o)},
$
where the summation is taken over all paths starting at $s_\ast$ of length $n$.

\begin{lemma}\label{semisimple}
Let $Pr(\f)=\max_\Cc Pr_\Cc(\f)$. 
Suppose that there are $L$ components in a finite directed graph $\A$.
We have
$$
\Lc_\f^n1_{[E_\ast]}(\emptyset) \le Cn^Le^{nPr(\f)}.
$$
On the other hand,
if there is a path from the initial state $s_\ast$ to successively $k$ different maximal components,
then we have
$$
\Lc_\f^n1_{[E_\ast]}(\emptyset) \ge C'n^{k-1}e^{nPr(\f)}.
$$
\end{lemma}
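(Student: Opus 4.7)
The plan is to unfold the sum $\Lc_\f^n 1_{[E_\ast]}(\emptyset) = \sum_{\o} e^{S_n\f(\o)}$, where the sum runs over paths of length $n$ in $\A$ starting at $s_\ast$, and classify these paths according to the sequence of components they traverse. Since each component is a maximal strongly connected subgraph, once a directed path leaves a component it cannot return; therefore a length-$n$ path from $s_\ast$ visits a sequence $\Cc_{i_1}, \Cc_{i_2}, \dots, \Cc_{i_m}$ of pairwise distinct components, with $m \le L$, spending a contiguous block of time $n_j \ge 1$ in each $\Cc_{i_j}$, and $n_1 + \cdots + n_m = n$ (the transitions between consecutive components consume one edge each, which I will absorb into $n_j$ or bound by an $O(1)$ shift).

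For the upper bound, I would fix the visit sequence $(i_1, \dots, i_m)$ (there are only boundedly many such sequences) and the partition $(n_1, \dots, n_m)$, and bound the contribution of paths with this data by a product of restricted transfer operator norms. Applied to bounded functions, $\Lc_{\Cc}^{n_j}$ on $\SS_\Cc$ is bounded above by $C e^{n_j Pr_\Cc(\f)}$ (by the spectral description of the transfer operator on each component in Theorem \ref{Gouezel} applied componentwise, or directly from the Gibbs property); using $Pr_\Cc(\f) \le Pr(\f)$ this becomes $C e^{n_j Pr(\f)}$. The product telescopes to $C^m e^{n Pr(\f)}$. The number of ordered partitions of $n$ into at most $L$ positive parts is $O(n^L)$, which yields the claimed $Cn^L e^{nPr(\f)}$.

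For the lower bound, I would fix a concrete path from $s_\ast$ that successively enters the $k$ maximal components $\Cc_{i_1}, \dots, \Cc_{i_k}$, choosing and fixing once and for all the entry and exit edges between consecutive $\Cc_{i_j}$'s; this uses up only an $O(1)$ prefix and $O(1)$ connecting edges. For any admissible partition of the remaining $n - O(1)$ into positive parts $n_1, \dots, n_k$, each compatible with the cyclic period $p_{\Cc_{i_j}}$, I concatenate paths of length $n_j$ in $\Cc_{i_j}$; the contribution from staying in $\Cc_{i_j}$ for time $n_j$ is bounded below by $c\, e^{n_j Pr_{\Cc_{i_j}}(\f)} = c\, e^{n_j Pr(\f)}$ (since $\Cc_{i_j}$ is maximal), either via the Gibbs lower bound for $\m_{\Cc_{i_j}}$ or via the strictly positive leading eigenfunction/eigenmeasure on $\Cc_{i_j}$ from the Ruelle--Perron--Frobenius description. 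The number of positive integer solutions to $n_1 + \cdots + n_k = n - O(1)$ (with the periodicity constraints, which only affect constants) is $\binom{n-O(1)}{k-1} \ge c'\, n^{k-1}$, and the product of the lower bounds is $c^k e^{(n - O(1)) Pr(\f)}$, giving the claim.

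The main subtlety I expect is keeping the bookkeeping honest in two places: first, in the upper bound, verifying that applying the full operator $\Lc_\f^n$ and restricting to paths that sit inside a given sequence of components really does factor (up to bounded constants from entry/exit edges) into a product of restricted operators $\Lc_{\Cc_{i_j}}^{n_j}$, so that one can invoke the componentwise spectral bound; and second, in the lower bound, justifying $\Lc_{\Cc_{i_j}}^{n_j} \gtrsim e^{n_j Pr(\f)}$ pointwise on the appropriate starting-edge set. The latter is where the cyclic decomposition matters: one must choose $n_j$ in the correct residue class mod $p_{\Cc_{i_j}}$ so that the entry vertex can actually reach the exit vertex in exactly $n_j$ steps, which restricts the partitions to a sublattice of $\Z^k$ of bounded index and therefore changes the count $\binom{n-O(1)}{k-1}$ only by a multiplicative constant.
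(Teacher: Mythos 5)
Your argument is correct and, for the upper bound, is essentially the paper's own proof: decompose each path by the (loop-free, hence boundedly many) sequences of components it visits, bound each block by the spectral radius $e^{a_iPr_{\Cc}(\f)}\le e^{a_iPr(\f)}$ of the restricted operator, and count compositions of $n$; the factorization subtlety you flag is resolved exactly as you suspect, via the H\"older continuity of $\f$, which gives $S_n\f(\o)\le C'k+\sum_i S_{|v_i|}\f(v_i)$. For the lower bound the paper simply cites Gou\"ezel's Lemma~3.7 rather than proving it, and your concatenation argument (fixed connecting edges, Ruelle--Perron--Frobenius lower bounds on each maximal component in the correct residue classes mod the periods, and the $\asymp n^{k-1}$ count of admissible partitions) is the standard and correct way to fill that in.
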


\begin{proof}
This is  \cite[Lemma 3.7]{G}, in particular, the latter is a special case of it.
We give the proof of the former for completeness.
Observe that in the components graph which is the directed graph obtained by identifying each component with a point, there are no loops, and thus, there are only finitely many directed paths.
Decompose a path $\o$ in the automaton:
When a path $\o$ starting from $s_\ast$ goes through $k$ different components $\Cc_1, \dots, \Cc_k$ successively, we decompose a path in the automaton by $\o=(u_0, v_1, u_1, \dots, u_{i-1}, v_i, u_i, \dots, u_{k-1}, v_k, u_k)$,
where each $v_i$ is a path in $\Cc_i$, and $u_0, u_1, \dots, u_{k-1}$ are transient paths connecting components $\Cc_1, \dots, \Cc_k$ successively and $u_k$ is a path emanating from $\Cc_k$, possibly empty.
By the H\"older continuity of $\f$, we get
\begin{align*}
S_n\f(\o)	\le Ck + \sum_{i}S_{|v_i|}\f(v_i, u_i, v_{i+1}, \dots, u_k) 
			\le C'k + \sum_{i}S_{|v_i|}\f(v_i),
\end{align*}
and thus,
\begin{align*}
\Lc_\f^n1_{[E_\ast]}(\emptyset)\le \sum e^{Ck}\sum_{a_1+\cdots+a_k=n-b}\left(\sum_{|v_1|=a_1}e^{S_{a_1}\f(v_1)}\right)\cdots\left(\sum_{|v_k|=a_k}e^{S_{a_k}\f(v_k)}\right),
\end{align*}
where $b=\sum|u_i|$ and the first summation is taken over all possible paths in the components graph.
For each component $\Cc$, the eigenvalue of maximal modulus of $\Lc_\f$ restricted on $\Cc$,
is not greater than $e^{Pr(\f)}$,
hence we obtain
$
\sum_{|v_i|=a_i}e^{S_{a_i}\f(v_i)}\le Ce^{a_iPr(\f)}.
$
Therefore it holds that
\begin{align*}
\Lc_\f^n1_{[E_\ast]}(\emptyset)	\le \sum e^{Ck}\sum_{a_1+\cdots+a_k=n-b} e^{(n-b)Pr(\f)} 
								\le Cn^Le^{nPr(\f)},
\end{align*}
and the lemma follows.
\end{proof}

Let $\A=(V,E,s_\ast)$ be a strongly Markov automaton associated with a non-elementary hyperbolic group $\G$,
and $K(x, y)$ be the Martin kernel corresponding to a finitely supported admissible probability measure $\m$ on $\G$.
We define the pressure by
\begin{equation}\label{potential}
\f(\o)=-\log K(\a_\ast \o_0, \a_\ast \o),
\end{equation}
where a path $\o=(\o_0, \o_1, \dots)$ is a sequence of edges in the graph $\A$.
By the cocycle identity:
$
K(gg', \x)=K(g, \x)K(g', g^{-1}\x),
$
we have
\begin{equation}\label{cocycle}
G(1, \a_\ast \bar \o_n) \asymp e^{S_n\f(\o)}
\end{equation}
for the path $\bar \o_n$ cut out from $\o$ at the length $n$, where we used $|S_n \f(\o)-S_n\f(\bar \o_n)| \le C$ by the H\"older continuity of $\f$.

By Lemma \ref{alphaH}, the map $\a_\ast: \overline \SS \to \G\cup \partial \G$ is H\"older continuous, and thus the pressure (\ref{potential}) is also H\"older continuous function on $\overline\SS$.

\begin{proposition}\label{prss}
Let $\f$ be the potential defined in (\ref{potential}).
For every $\th \in \R$, it holds that 
$
\b(\th)=Pr(\th \f).
$
In particular, $\b$ is analytic except for at most finitely many points.
Moreover, $\th \f$ is semisimple for every $\th \in \R$.
\end{proposition}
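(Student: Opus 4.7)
The plan is to match $\sum_{x\in S_n}G(1,x)^\th$ with the transfer operator $\Lc_{\th\f}^n$ evaluated at the empty path, apply Lemma \ref{semisimple} to identify $\b(\th)$ with $Pr(\th\f)$, and then exploit the sharp Gibbs-type bound from Proposition \ref{comp} to rule out non-semisimplicity.

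By condition (3) of the strongly Markov structure, $\a_\ast$ furnishes a bijection between paths of length $n$ emanating from $s_\ast$ and elements of $S_n$. Combining this with the cocycle identity (\ref{cocycle}) and the H\"older continuity of $\f$ from Lemma \ref{alphaH}, one obtains
\begin{equation*}
\sum_{x\in S_n}G(1,x)^\th \asymp_\th \sum_{|\o|=n,\ \o\text{ from }s_\ast}e^{S_n(\th\f)(\o)} = \Lc_{\th\f}^n 1_{[E_\ast]}(\emptyset).
\end{equation*}
The upper bound in Lemma \ref{semisimple} immediately gives $\b(\th)\le Pr(\th\f)$. For the matching lower bound, I would pick any maximal component $\Cc_\ast$ (so $Pr_{\Cc_\ast}(\th\f)=Pr(\th\f)$), use condition (1) to reach $\Cc_\ast$ from $s_\ast$ by a path of bounded length, and then restrict to paths that stay in $\Cc_\ast$ afterwards; the single-component, trivially semisimple, case of Theorem \ref{Gouezel} applied to $\Lc_{\th\f}$ restricted to $\Cc_\ast$ yields growth $\asymp e^{nPr(\th\f)}$. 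Hence $\b(\th)=Pr(\th\f)$.

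Next, I would deduce semisimplicity from Proposition \ref{comp}, which gives the sharp two-sided estimate $\sum_{x\in S_n}G(1,x)^\th\asymp_\th e^{n\b(\th)}$, hence $\Lc_{\th\f}^n 1_{[E_\ast]}(\emptyset)\asymp_\th e^{nPr(\th\f)}$ \emph{without any polynomial factor}. If $\th\f$ were not semisimple, some directed path would join two distinct maximal components $\Cc_1,\Cc_2$; extending it by a path $s_\ast\to\Cc_1$ (available by condition (1)) produces a path from $s_\ast$ through two successive maximal components, and the lower bound in Lemma \ref{semisimple} forces $\Lc_{\th\f}^n 1_{[E_\ast]}(\emptyset)\ge cn\cdot e^{nPr(\th\f)}$, contradicting the sharp estimate. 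So $\th\f$ is semisimple for every $\th\in\R$.

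For the analyticity statement, at each $\th_0\in\R$ I would apply Proposition \ref{pert} to the semisimple potential $\th_0\f$ with the small perturbation $(\th-\th_0)\f$: this gives the real-analyticity in $\th$ of every $Pr_{\Cc_i}(\th\f)$ for $\Cc_i$ among the maximal components of $\th_0\f$, and on a neighbourhood of $\th_0$ the pressure $\b(\th)=Pr(\th\f)$ is the maximum of these finitely many convex real-analytic functions, hence analytic except where two of them coincide as the dominant one. The main obstacle is upgrading this local discreteness of the exceptional set to global \emph{finiteness} on $\R$; I expect this to follow from the finiteness of the component set of $\A$ together with the distinct asymptotic slopes of the $Pr_{\Cc_j}(\th\f)$ as $\th\to\pm\infty$ read off from the variational principle, which should confine every pairwise crossing of such analytic convex functions to a bounded range of $\th$.
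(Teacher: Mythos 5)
Your proof is correct and follows the paper's argument essentially step for step: the identification of $\sum_{x\in S_n}G(1,x)^\th$ with $\Lc_{\th\f}^n 1_{[E_\ast]}(\emptyset)$ (which in fact holds as an exact identity up to the factor $G(1,1)^{-\th}$, by telescoping the Martin kernels), the two bounds of Lemma \ref{semisimple} to get $\b(\th)=Pr(\th\f)$, and the contradiction between the $k=2$ lower bound and the sharp estimate of Proposition \ref{comp} to force semisimplicity. Concerning the ``obstacle'' you flag for the finiteness of the non-analyticity set: the paper is no more detailed than you are here --- it simply asserts that the maximum of finitely many (convex, analytic) component pressures is analytic off a finite set --- so your caution does not constitute a gap relative to the paper's own proof.
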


\begin{proof}
For each component $\Cc$, the pressure $Pr_\Cc(\th \f)$ is analytic in $\th$; hence $Pr(\th \f)$ is analytic in $\th$ except for at most finitely many points since it is defined as the maximum of finitely many analytic functions.
For a path $\o$ of finite length,
the pressure has the following form:
$$
\f(\o)=-\log \frac{G(\a_\ast \o_0, \a_\ast \o)}{G(1, \a_\ast \o)}.
$$
Therefore we obtain
$$
\frac{1}{G(1,1)^\th}\sum_{x \in S_n}G(1,x)^\th=\Lc_{\th \f}^n1_{[E_\ast]}(\emptyset).
$$
By Lemma \ref{semisimple} and Proposition \ref{comp}, 
we have $\b(\th) = Pr(\th \f)$.
Observe that if there is a path from a maximal component to the other maximal component, then concatenating a path emanating from $s_\ast$, one can assume that the path is starting from $s_\ast$ and going through two distinct maximal components.
If there is a directed path from $s_\ast$ passing through $k>1$ different maximal components, then
by the second inequality of Lemma \ref{semisimple}, one has $e^{n\b(\th)} \ge Cn^{k-1}e^{nPr(\th \f)}$;
a contradiction.
\end{proof}

Thermodynamic formalism gives some additional information about a probability measure $\m_\th$ constructed in Theorem \ref{FGPSmeas}.
Henceforth, we consider the potential $\f$ defined in (\ref{potential}).
Fix $\th \in \R$, let $p_1, \dots, p_I$ be the corresponding periods of maximal components for the pressure $\th \f$,
and denote by $p$ the least common multiple of those periods.
Define the measures $\tilde{m}_{np+q}$, $q=0, \dots, p-1$, by the positive linear functionals on $\overline \SS$:
$\tilde{m}_{np+q}(f)=e^{-(np+q)Pr(\th)}\Lc_{\th \f}^{np+q}(1_{[E_\ast]}f)(\emptyset)$.
For each fixed $q$, the measures $\tilde{m}_{np+q}$ converge to some measure $\tilde{m}_{q}$ according to Theorem \ref{Gouezel}.
Then the Ces\`aro average of the measures $(\sum_{n=1}^{N}\tilde{m}_{n})/(\sum_{n=1}^{N}\tilde{m}_{n}(\overline \SS))$
converge to a probability measure weakly.
(First, those measures converge for each H\"older continuous function $f$, then we deduce the weak convergence by approximating each continuous function $f$ by H\"older continuous ones on $\overline \SS$.)
We denote the weak limit of the measures $(\sum_{n=1}^{N}\tilde{m}_{n}(1_{[E_\ast]} \cdot ))/(\sum_{n=1}^{N}\tilde{m}_{n}(1_{[E_\ast]}))$  by $\tilde m_\th$.

\begin{theorem}\label{am=mth}
The measures $\m_{\th, s}$ defined by (\ref{FGPSs}) in Theorem \ref{FGPSmeas} converge to $\a_\ast \tilde m_\th$ as $s \downarrow \b(\th)$ without passing to a subsequence, where $\a_\ast \tilde m_\th$ denotes the push-forward of $\tilde m_\th$ by the map $\a_\ast$.
In particular, the measure $\a_\ast \tilde m_\th$ is comparable with $\m_\th$.
\end{theorem}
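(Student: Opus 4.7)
The plan is to reinterpret the measures $\m_{\th,s}$ as ratios of iterates of the transfer operator $\Lc_{\th\f}$, and then apply the thermodynamic formalism of Theorem \ref{Gouezel} to extract the limit as $s\downarrow\b(\th)$.

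First I will sharpen (\ref{cocycle}) to an exact identity. Iterating the cocycle $K(gg',\x)=K(g,\x)K(g',g^{-1}\x)$ at the group element $\x=\a_\ast\bar\o=g_0 g_1\cdots g_{n-1}$ gives $\prod_{i=0}^{n-1}K(g_i,\a_\ast\s^i\bar\o)=K(\a_\ast\bar\o,\a_\ast\bar\o)=G(1,1)/G(1,\a_\ast\bar\o)$, so
\begin{equation*}
G(1,\a_\ast\bar\o)=G(1,1)\,e^{S_n\f(\bar\o)}
\end{equation*}
for every finite path $\bar\o$ of length $n$ starting from $s_\ast$. Using that $\a_\ast$ bijects such paths with $S_n$, for any continuous $F$ on $\G\cup\partial\G$ the cancellation of $G(1,1)^\th$ between numerator and denominator yields
\begin{equation*}
\int F\,d\m_{\th,s}=\frac{\sum_{n\ge 0}e^{-sn}\Lc_{\th\f}^n\bigl(1_{[E_\ast]}(F\circ\a_\ast)\bigr)(\emptyset)}{\sum_{n\ge 0}e^{-sn}\Lc_{\th\f}^n(1_{[E_\ast]})(\emptyset)}.
\end{equation*}

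Setting $t:=s-\b(\th)>0$ and $a_n(H):=\tilde m_n(H)=e^{-n\b(\th)}\Lc_{\th\f}^n(1_{[E_\ast]}H)(\emptyset)$, each series becomes the Abel sum $\sum_{n\ge 0}e^{-nt}a_n(H)$. Since $\th\f$ is semisimple by Proposition \ref{prss}, Theorem \ref{Gouezel} supplies a decomposition $a_n(H)=A_{n\bmod p}(H)+O(\|H\|e^{-n\e_0})$, where $p$ is the least common multiple of the periods $p_i$ of the maximal components and $A_q(H)=\tilde m_q(H)$ by construction. Splitting the Abel sum by residue class modulo $p$ and summing the geometric series, I expect
\begin{equation*}
\sum_{n\ge 0}e^{-nt}a_n(H)\sim\frac{1}{pt}\sum_{q=0}^{p-1}\tilde m_q(H)\qquad(t\downarrow 0),
\end{equation*}
the exponentially decaying tail contributing only a bounded remainder. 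The factor $1/(pt)$ cancels in the ratio, so
\begin{equation*}
\int F\,d\m_{\th,s}\longrightarrow\frac{\sum_q\tilde m_q(F\circ\a_\ast)}{\sum_q\tilde m_q(1)}=\int F\,d(\a_\ast\tilde m_\th)
\end{equation*}
for every H\"older continuous $F$; the denominator is strictly positive by Proposition \ref{comp}, which gives $a_n(1)\asymp 1$.

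To pass to arbitrary continuous $F$ on the compact space $\G\cup\partial\G$, I will approximate uniformly by H\"older (say Lipschitz in $d_\e$) functions; since the $\m_{\th,s}$ are probability measures, the sup-norm error transfers with a uniform bound independent of $s$. That the limit is carried by $\partial\G$ follows because the denominator diverges like $1/t$ while the numerator restricted to any finite subset of $\G$ stays bounded, forcing $\m_{\th,s}(K)\to 0$ for every compact $K\subset\G$. This establishes the weak convergence $\m_{\th,s}\to\a_\ast\tilde m_\th$ as $s\downarrow\b(\th)$ without passing to a subsequence. Any $\m_\th$ furnished by Theorem \ref{FGPSmeas} is itself a subsequential weak limit of the same family, so it coincides with $\a_\ast\tilde m_\th$; alternatively, $\a_\ast\tilde m_\th$ inherits the transformation rule (\ref{RN}) in the weak limit and is then comparable with $\m_\th$ by Theorem \ref{uniqueness}.

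The step I expect to be most delicate is the Abel-sum bookkeeping across the several maximal components with their distinct periods $p_i$: I must verify that the leading $1/t$ behaviour is captured exactly by the periodic part $A_q$, that the exponentially decaying remainder indeed contributes only $O(1)$ as $t\downarrow 0$, and that the resulting periodic pieces recombine precisely into the Ces\`aro average $\tilde m_\th$ defined in the text rather than some reweighted variant. The remaining items---the cocycle manipulation, the H\"older-to-continuous approximation, and the escape of mass to $\partial\G$---should be routine.
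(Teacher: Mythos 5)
Your proposal is correct and follows essentially the same route as the paper: the paper's proof likewise rests on the exact identity $\sum_{x\in S_n}G(1,x)^\th f(x)=G(1,1)^\th\Lc_{\th\f}^n(1_{[E_\ast]}f\circ\a_\ast)(\emptyset)$ and on Theorem \ref{Gouezel} applied to the semisimple potential $\th\f$, with the Abel-sum bookkeeping you carry out left implicit there, and it concludes comparability exactly as in your fallback, by noting that every weak limit of $\m_{\th,s}$ satisfies (\ref{RN}) and invoking Theorem \ref{uniqueness}. Your worry about the delicate step resolves as you expect, since each residue class modulo $p$ contributes the same geometric factor $e^{-qt}/(1-e^{-pt})\sim 1/(pt)$, so the Abel and Ces\`aro limits agree.
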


\begin{proof}
In the above construction of the measure $\tilde m_\th$, take $f\circ \a_\ast $ for every continuous function $f$ on $\G \cup \partial \G$.
By the relation
$
\sum_{x \in S_n}G(1, x)^\th f(x)=G(1,1)^\th \Lc_{\th \f}^n(1_{[E_\ast]}f\circ \a_\ast)(\emptyset),
$
we deduce that $\m_{\th,s}$ converges weakly to $\a_\ast \tilde m_\th$ as $s \downarrow \b(\th)$.
As we see in the proof of Theorem \ref{FGPSmeas}, every weak limit of $\m_{\th, s}$ as $s \downarrow \b(\th)$ satisfies (\ref{RN}).
Therefore $\a_\ast \tilde m_\th$ is comparable with $\m_\th$ by Theorem \ref{uniqueness}.
\end{proof}

The ergodicity of $\m_\th$ provides the differentiability of $\b$ at $\th \in \R$.

\begin{proposition}\label{ergodic-differential}
If $\m_\th$ is ergodic for the action of $\G$ for some $\th \in \R$, then 
the corresponding $\b$ is differentiable at $\th$.
\end{proposition}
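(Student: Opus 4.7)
The plan is to identify the one-sided derivatives of $\b$ at $\th$ in terms of the Gibbs measures on the maximal components of $\th\f$ (using $\b(\th)=Pr(\th\f)$ from Proposition \ref{prss}), and then, assuming non-differentiability, to exhibit a non-trivial $\G$-invariant partition of $\partial\G$ of positive $\m_\th$-measure, contradicting ergodicity. Since $\th\f$ is semisimple, I would apply Proposition \ref{pert} with base potential $\th\f$ and perturbation $t\f$: for $|t|$ small the maximal components of $(\th+t)\f$ lie among those of $\th\f$, and for each maximal component $\Cc_i$ of $\th\f$ with Gibbs measure $\m_{i,\th}$,
$$
Pr_{\Cc_i}((\th+t)\f)=\b(\th)+t\int\f\,d\m_{i,\th}+O(t^2).
$$
Setting $c_i:=\int\f\,d\m_{i,\th}$ and maximising over $i$ yields $\b'_+(\th)=\max_i c_i$ and $\b'_-(\th)=\min_i c_i$, so $\b$ is differentiable at $\th$ if and only if all $c_i$ agree. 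Suppose for contradiction that $c_1\neq c_2$ for two maximal components $\Cc_1,\Cc_2$.

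I would then define $M(\x):=\lim_{n\to\infty}\log G(1,y_n)/n$ along any geodesic ray $y_n\to\x$ from $1$, wherever the limit exists. Using the Ancona inequality (\ref{A}) together with Harnack (\ref{H}), any two geodesic rays from $1$ to $\x$ fellow-travel, so $\log G(1,y_n)$ along one differs from $\log G(1,y'_n)$ along another by $O(1)$, and $M(\x)$ is independent of the chosen ray. The same Ancona-Harnack argument shows $M$ is $\G$-invariant: for $g\in\G$ and a geodesic ray $z_n\to g\x$ from $1$, Ancona gives $\log G(1,z_n)=\log G(1,g)+\log G(g,z_n)+O(1)$ and $G(g,z_n)=G(1,g^{-1}z_n)$ by left-invariance, with $g^{-1}z_n$ tracking a geodesic ray to $\x$ up to bounded distance; dividing by $|z_n|\sim n$ absorbs the term $\log G(1,g)$ and yields $M(g\x)=M(\x)$.

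By the ergodicity of $\m_{i,\th}$ as a $\s$-invariant measure (Theorem \ref{Gouezel}), Birkhoff's theorem gives $S_n\f(\o)/n\to c_i$ for $\m_{i,\th}$-a.e.\ $\o\in\SS_{\Cc_i}$; via the cocycle relation (\ref{cocycle}) this becomes $\log G(1,\a_\ast\bar\o_n)/n\to c_i$, so $M(\a_\ast\o)=c_i$ on a $\m_{i,\th}$-full set. By Theorem \ref{am=mth} and the decomposition in Theorem \ref{Gouezel}, the measure $\tilde m_\th$ admits a positive-weight decomposition $\sum_i w_i\tilde m_{\th,i}$, where $\tilde m_{\th,i}$ is supported on infinite paths starting from $s_\ast$ and eventually staying in $\Cc_i$; the coefficients $w_i$ are positive because every maximal component is accessible from $s_\ast$ by condition~(1) of the strongly Markov structure. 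After a sufficiently large shift, $\tilde m_{\th,i}$ becomes absolutely continuous with respect to $\m_{i,\th}$ with bounded Radon--Nikodym derivative on long cylinders in $\Cc_i$, and so the Birkhoff limit transfers to $\tilde m_{\th,i}$-a.e.\ $\o$. Pushing forward by $\a_\ast$ and invoking the comparability $\a_\ast\tilde m_\th\asymp\m_\th$ (Theorems \ref{am=mth} and \ref{uniqueness}), we obtain $\m_\th(\{M=c_i\})>0$ for both $i=1,2$. The disjoint sets $\{M=c_1\}$ and $\{M=c_2\}$ are $\G$-invariant Borel subsets of $\partial\G$ of positive $\m_\th$-measure, contradicting the $\G$-ergodicity of $\m_\th$.

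The most delicate step is the bookkeeping in the third paragraph, namely showing that the Ces\`aro-averaged measure $\tilde m_\th$ decomposes with \emph{positive} weights into pieces supported on paths eventually trapped in each maximal $\Cc_i$, and that each such piece transports Birkhoff's theorem from $\m_{i,\th}$; this amounts to tracking the supports of $h_{i,j}$ and $\l_{i,j}$ in Theorem \ref{Gouezel} through the construction of $\tilde m_\th$ in Theorem \ref{am=mth} and observing that a finite shift renders the restriction absolutely continuous with respect to $\m_i$ with bounded density.
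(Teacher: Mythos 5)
Your proposal is correct and follows essentially the same route as the paper's proof (which in turn follows Gou\"ezel's Proposition 3.16 and Calegari--Fujiwara): identify the one-sided derivatives with the values $c_i=\int\f\,d\m_i$ on the maximal components via Proposition \ref{pert}, build the $\G$-invariant level sets of the Green decay rate along geodesic rays, push the Birkhoff limit from the component Gibbs measures through $\tilde m_\th$ and $\a_\ast$ into $\m_\th$ using Theorems \ref{Gouezel}, \ref{am=mth} and \ref{uniqueness}, and conclude by ergodicity (the paper shows each level set has full $\m_\th$-measure so that they must intersect, which is the same argument as your two disjoint positive-measure invariant sets). One cosmetic slip: the identity $\log G(1,z_n)=\log G(1,g)+\log G(g,z_n)+O(1)$ is not an instance of Ancona's inequality for general $g$ (which need not lie near a geodesic from $1$ to $z_n$); the Harnack bound $|\log G(1,z_n)-\log G(g,z_n)|\le |g|\log C'$ is all that is available and all that is needed, since the error is $O_g(1)$ and vanishes after dividing by $n$.
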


\begin{proof}
Recall that $\b(\th)=\max_\Cc Pr_\Cc(\th \f)$ by Proposition \ref{prss}.
For each $\th \in \R$, let $\Cc_1, \dots, \Cc_k$ be the corresponding maximal components for the potential $\th \f$.
By Proposition \ref{pert}, for each $\Cc_i$, we have
$
Pr'_{\Cc_i}(\th \f)=\int \f d\m_i
$
(the derivative at $\th$).
We will show that it holds that $\int \f d\m_i=\int \f d\m_{i'}$ for $i \neq i'$, and then the differentiability of $\b$ at $\th$ will follow. 
This is nothing but \cite[Proposition 3.16]{G}.
(The argument is well adapted without almost any modifications.)
Below we describe the proof for convenience, following \cite{G} and the idea of Calegari and Fujiwara \cite[Lemma 4.24]{CF}.
Let $c_i:=\int \f d\m_i$.
We define $E_i$ the set of points $\x$ in $\partial \G$ such that the sequence $\log F(1, \g(n))/d(1, \g(n))$ converges to $c_i$ along some geodesic $\g$ towards $\x$.
Notice that in the above, one can replace some geodesic by every geodesic towards $\x$, and in particular, the set $E_i$ is $\G$-invariant.
We will show that $\m_\th(E_i)=1$. This will imply that $E_i\cap E_{i'} \neq \emptyset$, and thus $c_i=c_{i'}$.
Denote by $O_i \subset \overline \SS$ the set of infinite paths $\o$ such that the Birkhoff averages $(1/n)S_n\f(\o)$ have the limit $c_i$.
The Birkhoff ergodic theorem implies that $\m_i(O_i)=1$.
(Recall that $\m_i$ is $\s$-invariant and ergodic on $\SS$.)
We have $\a_\ast^{-1} E_i=O_i\cap [E_\ast]$, where $[E_\ast]$ is the set of paths starting at $s_\ast$, since $F(1, \a_\ast \bar \o_n) \asymp e^{S_n\f(\o)}$ by (\ref{cocycle}).
The measure $\m_i$ is comparable with the measure $\b_i=\sum_j \l_{i,j}$ on the set of paths staying in the component $\Cc_i$.
This enables one to show that $\b_i(O_i^c)=0$ \cite[p.916]{G}.
We will show that $\m_\th(E_i)>0$. Suppose the contrary.
Since the push-forward of $\b_i(\cdot \cap [E_\ast])$ by $\a_\ast$ is absolutely continuous with respect to $\a_\ast \tilde m_\th$ which is comparable with $\m_\th$ by Theorem \ref{am=mth},
we have $\b_i(O_i\cap[E_\ast])=0$.
The set $O_i^c$ has the $\b_i$-measure $0$ by the above, hence $\b_i([E_\ast])=0$ and this is a contradiction since the measure $\b_i$ has a positive mass on $[E_\ast]$ by Theorem \ref{Gouezel}. 
Above all, we have $\m_\th(E_i)>0$, in fact, the measure equals $1$ if $\m_\th$ is ergodic.
Thus we conclude the proof.
\end{proof}

\begin{corollary}\label{conti-diff}
For every finitely supported admissible probability measure $\m$ on $\G$,
the function $\b(\th)$ is continuously differentiable at every $\th \in \R$.
\end{corollary}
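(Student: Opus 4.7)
The plan is to combine three ingredients already established in the paper: the convexity of $\b$ (Lemma \ref{beta}), the ergodicity of $\m_\th$ for every $\th \in \R$ (Corollary \ref{ergodic}), and the criterion linking ergodicity of $\m_\th$ to differentiability of $\b$ at $\th$ (Proposition \ref{ergodic-differential}). The argument is essentially a packaging step, not a new computation.

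First I would invoke Theorem \ref{FGPSmeas} to produce, for each $\th \in \R$, a probability measure $\m_\th$ satisfying the transformation rule (\ref{RN}). By Corollary \ref{ergodic}, this measure $\m_\th$ is ergodic for the $\G$-action on $\partial \G$. Applying Proposition \ref{ergodic-differential} with this $\th$, I conclude that $\b$ is differentiable at $\th$. Since $\th \in \R$ was arbitrary, $\b$ is differentiable at every point of $\R$.

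Next I would promote pointwise differentiability to continuous differentiability by invoking convex analysis. By Lemma \ref{beta}, $\b$ is convex on $\R$; hence for every $\th_0 \in \R$ the one-sided derivatives $\b'_-(\th_0)$ and $\b'_+(\th_0)$ exist, and the functions $\th \mapsto \b'_\pm(\th)$ are monotone non-decreasing with
\[
\b'_-(\th) \le \b'_+(\th) \le \b'_-(\th') \le \b'_+(\th') \quad \text{for } \th < \th'.
\]
Since we have just shown $\b'_-(\th)=\b'_+(\th)=\b'(\th)$ for every $\th$, the common value $\b'$ is monotone and, being squeezed between left and right limits that agree at every point, admits no jump discontinuities. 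A monotone function with no jump discontinuities is continuous, so $\b' : \R \to \R$ is continuous, proving the corollary.

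I do not expect a genuine obstacle here: all the heavy lifting, namely the construction of $\m_\th$ via the Patterson-Sullivan technique, its ergodicity from the uniqueness in Theorem \ref{uniqueness}, and the thermodynamic identification $\b(\th)=Pr(\th \f)$ together with Gou\"ezel's perturbation theorem, has been carried out in the preceding propositions. The only subtlety worth flagging is that Proposition \ref{prss} already gives analyticity of $\b$ off a finite exceptional set, so the whole point of the corollary is to rule out a corner at these finitely many transition points where distinct maximal components coexist; Proposition \ref{ergodic-differential} accomplishes exactly this by forcing the Birkhoff averages $\int \f\,d\m_i$ to coincide across maximal components, via the ergodicity of the single measure $\m_\th$.
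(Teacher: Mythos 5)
Your proposal is correct and follows essentially the same route as the paper: ergodicity of $\m_\th$ (Corollary \ref{ergodic}) fed into Proposition \ref{ergodic-differential} gives differentiability at every $\th$, which is exactly what the paper does. The only difference is that you make explicit, via the monotonicity of the one-sided derivatives of the convex function $\b$, why pointwise differentiability upgrades to continuity of $\b'$, a step the paper leaves implicit (relying on analyticity off a finite set plus the same convexity fact); this is a welcome clarification rather than a deviation.
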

\begin{proof}
Recall that $\b$ is analytic except at most finitely many points by Proposition \ref{prss}.
For every $\th \in \R$, a probability measure $\m_\th$ is ergodic by Corollary \ref{ergodic}; the claim follows from Proposition \ref{ergodic-differential}.
\end{proof}

\section{Hausdorff spectrum}\label{HS}

For every finitely supported admissible probability measure $\m$ on a non-elementary hyperbolic group $\G$ equipped with a word metric, 
let $\n$ be the corresponding harmonic measure on the geometric boundary $\partial \G$.
In this section, we consider the dimensional properties of the harmonic measure $\n$.
First, we discuss about a typical pointwise behaviour of $\n$, and establish a formula on the Hausdorff dimension of $\n$ with the entropy and the drift of the random walk.
Next, we consider an atypical pointwise behaviour of $\n$, and introduce the Hausdorff spectrum (the multifractal spectrum), which describes a variety of pointwise dimensions at $\n$-measure null.
The Hausdorff spectrum is given by the Legendre transform of $\b$, and this allows us to prove that the fundamental inequality becomes the genuine equality if and only if the harmonic measure $\n$ is comparable with the $D$-Hausdorff measure $\H^D$ on the boundary $\partial \G$.
This holds for every finitely supported admissible probability measure $\m$ on $\G$.

\subsection{Hausdorff spectrum of the harmonic measure}

Let $\m$ be an arbitrary finitely supported admissible probability measure on $\G$.
First, we prove the following lemma for $\m$ not necessarily symmetric.
Define $d_G(x, y)=-\log F(x, y)$ for $x, y$ in $\G$.
Here $d_G$ satisfies the triangular inequality in the sense that
$
d_G(x, y) \le d_G(x, z)+d_G(z, y)
$
and the positivity $d_G(x, y)>0$ for $x \neq y$, but does not satisfy the symmetry.

\begin{lemma}\label{qi}
There exist constants $L>0$ and $C \ge 0$ such that for all $x, y$ in $\G$,
$$
L^{-1}d(x, y) - C \le d_G(x, y) \le Ld(x, y).
$$
\end{lemma}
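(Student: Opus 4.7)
The plan is to establish the two inequalities separately, combining elementary first-passage estimates with the spectral radius bound that the paper has already invoked in the proof of Lemma \ref{th01}. Throughout, let $S$ be the chosen symmetric generating set, let $r := \max\{|s| : s \in \supp(\m)\}$, and let $\eta := \min\{\m(g) : g \in \supp(\m)\}>0$; by left invariance of $F$ it suffices to bound $d_G(1,y)$ in terms of $|y|$.

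For the upper bound $d_G(x,y) \le L\,d(x,y)$, I would use admissibility. Since $\supp(\m)$ generates $\G$ as a semigroup, each symmetric generator $s \in S$ can be written as a product $s = g_1 \cdots g_{k(s)}$ with $g_i \in \supp(\m)$; set $K := \max_{s \in S} k(s)$, which is finite because $S$ is finite. Given $y$ with $|y| = n$, write $y = s_1 \cdots s_n$ along a geodesic and concatenate the factorizations of each $s_j$ to obtain a word of length at most $Kn$ over $\supp(\m)$ representing $y$. The probability that the random walk follows this specific sequence of increments is at least $\eta^{Kn}$, so $F(1,y) \ge \eta^{Kn}$, giving $d_G(1,y) \le K(-\log \eta)\,|y|$. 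This proves the upper bound with $L$ at least $K(-\log \eta)$.

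For the lower bound $d_G(x,y) \ge L^{-1}d(x,y) - C$, I would reuse the observation made inside the proof of Lemma \ref{th01}: on a non-amenable group the spectral radius $\r$ of $\m$ is strictly less than $1$, so $p_n(x,y) \le \r^n$, and because $\m$ has finite range with maximum step length $r$, the random walk needs at least $n_0 := \lceil |y|/r \rceil$ steps to reach $y$ from $1$. Summing a geometric tail,
\begin{equation*}
G(1,y) = \sum_{n \ge n_0} p_n(1,y) \le \sum_{n \ge n_0} \r^n = \frac{\r^{n_0}}{1-\r} \le C_1 e^{-\lambda |y|},
\end{equation*}
with $\lambda := -(\log \r)/r > 0$. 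Since $F(1,y) = G(1,y)/G(1,1)$, taking $-\log$ yields $d_G(1,y) \ge \lambda |y| - \log(C_1/G(1,1))$, which is the desired inequality with $L^{-1} = \lambda$ and $C = \log(C_1/G(1,1))$ (replacing $L$ and $C$ by the larger of the two constants produced by the two steps).

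There is no real obstacle here: both directions are soft and purely use (i) admissibility with finite support, and (ii) non-amenability of $\G$ together with the finite-range hypothesis on $\m$. The only minor care needed is that the lower bound requires the additive constant $C$ in order to absorb $\log(C_1/G(1,1))$ and to accommodate the trivial case $x=y$ (where $d_G(x,y)=0$); this is exactly what the statement of the lemma allows.
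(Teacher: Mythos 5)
Your proposal is correct and follows essentially the same route as the paper: the upper bound is the subadditivity of $d_G$ along a geodesic word combined with finiteness of $d_G(1,s)$ for each generator (which you make explicit via admissibility and the bound $F(1,y)\ge \eta^{K|y|}$, while the paper simply sets $L=\max_{s\in S}d_G(1,s)$ and applies the triangle inequality), and the lower bound is exactly the paper's argument, namely the estimate $G(x,y)\le Ce^{-\lambda d(x,y)}$ obtained from the spectral radius $\rho<1$ together with the finite range of the step distribution.
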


\begin{proof}
Suppose that $d(x, y)=m$.
Let $L:=\max_{s \in S}d_G(1, s)>0$.
By the triangular inequality for $d_G$, we have $d_G(x, y) \le Lm=Ld(x, y)$, 
and thus the second inequality holds.
To show the first inequality, it follows from the fact that the Green function satisfies that $G(x, y) \le Ce^{-\l d(x,y)}$.
See also \cite[Proposition 3.5]{BHM11}.
\end{proof}

Recall that the Hausdorff dimension of the measure $\n$ on the metric space $(\partial \G, d_\e)$ is defined by
\begin{equation*}
\dim_H \n=\inf \{\dim_H C \ | \ \n(C^c) = 0\}.
\end{equation*}

We show that the so-called dimension, entropy and drift formula for the harmonic measure $\n$.

\begin{theorem}\label{dim-ent-speed}
For every finitely supported admissible probability measure $\m$ on every non-elementary hyperbolic group $\G$ equipped with a word metric,
the corresponding harmonic measure $\n$ satisfies that
\begin{equation*}
\lim_{r \to 0} \frac{\log \n\left(B(\x, r)\right)}{\log r}=\frac{h}{\e l},
\end{equation*}
for $\n$-a.e.\ $\x$.
In particular,
$
\dim_H\n=h/(\e l).
$
\end{theorem}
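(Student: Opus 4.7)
The strategy is to reduce the pointwise dimension problem to computing an escape rate $\lim_n d_G(1, x_n)/n$ in the Green pseudometric, and to identify that escape rate with the entropy $h$. By Theorem \ref{Kaim}, $\Pr$-almost surely $x_n$ converges to some $\x_\infty\sim\n$ with $d(1,x_n) = ln + o(n)$ and $d(x_n, y_n) = o(n)$ for $y_n := \g_\o(\lfloor ln\rfloor)$ along a geodesic ray $\g_\o$ from $1$ to $\x_\infty$. Corollary \ref{Gibbsth} at $\th = 1$ (where $\b(1)=0$ by Lemma \ref{th01}) gives $\n(S(y_n,R))\asymp_R G(1,y_n)$, which combined with Lemma \ref{shadow} and the Harnack inequality (\ref{H}) yields
\[
\log\n\bigl(B(\x_\infty,e^{-\e l n})\bigr) = \log G(1,x_n) + o(n) = -d_G(1,x_n) + o(n),
\]
where $d_G(1,x):=-\log F(1,x)$. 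It thus suffices to show $d_G(1,x_n)/n\to h$ almost surely.

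For this, the inequality $F(1,x_{n+m})\ge F(1,x_n)F(x_n,x_{n+m})$ yields the subadditive cocycle $d_G(1,x_{n+m})\le d_G(1,x_n)+d_G(1,x_m)\circ\s^n$ for the Bernoulli shift $\s$ on $(\G^{\N},\m^{\otimes\N})$. Together with $d_G(1,x_n)\le L\cdot d(1,x_n) = O(n)$ from Lemma \ref{qi}, Kingman's subadditive ergodic theorem gives $d_G(1,x_n)/n\to\ell_G:=\lim_n E[d_G(1,x_n)]/n$ $\Pr$-almost surely and in $L^1$. The upper bound $\ell_G\le h$ is then immediate from $F(1,x_n)\ge \m^{\ast n}(x_n)$ and the Shannon-McMillan-Breiman theorem of Derriennic and Kaimanovich-Vershik: $-\log\m^{\ast n}(x_n)/n\to h$ almost surely.

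The lower bound $\ell_G\ge h$ is the main step. The key input is Proposition \ref{comp} at $\th=1$, which yields the uniform counting estimate $Z_m:=\sum_{x\in S_m}F(1,x)\asymp 1$. Thus $q_m(x):=F(1,x)/Z_m$ is a probability measure on each shell $S_m$. Writing $P_m:=\m^{\ast n}(S_m)$ and applying the positivity of relative entropy $-\sum p\log q\ge-\sum p\log p$ shell-by-shell (with $p=\m^{\ast n}|_{S_m}/P_m$ and $q=q_m$), then summing over $m$ and using $H(\m^{\ast n})=hn+o(n)$, that $(P_m)$ is supported on $O(n)$ shells (so $\sum_m P_m\log P_m\ge -O(\log n)$), and $|\log Z_m|=O(1)$, one gets
\[
E[-\log F(1,x_n)]\ge H(\m^{\ast n})+\sum_m P_m\log P_m-\sum_m P_m\log Z_m = hn-O(\log n),
\]
whence $\ell_G\ge h$. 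Combining the two bounds, $\ell_G=h$.

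Putting the three steps together gives $\log\n(B(\x_\infty,e^{-\e l n}))/(-\e l n)\to h/(\e l)$ $\Pr$-a.s. Monotonicity of $r\mapsto\n(B(\x_\infty,r))$ and the bounded ratio between consecutive radii $e^{-\e l n}$ extend the limit to all $r\to 0$, and Young's theorem (pointwise dimension $\a$ a.e.\ implies $\dim_H\n=\a$) yields the Hausdorff dimension formula. The bottleneck is the lower bound $\ell_G\ge h$ in the third paragraph: in the symmetric case of \cite{BHM11} this is obtained from the fact that $d_G$ is itself a genuine metric quasi-isometric to the word metric together with quasi-conformality of $\n$ with respect to $d_G$; in the non-symmetric setting these tools are unavailable, and one is instead forced to couple the Gibbs-type counting estimate of Proposition \ref{comp} with the entropy content of $\m^{\ast n}$ through the positivity of relative entropy.
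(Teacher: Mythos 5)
Your proof is correct, and its overall skeleton --- reduction to shadows via Lemma \ref{shadow}, the Gibbs property $\n(S(g,R))\asymp G(1,g)$, Kaimanovich's geodesic tracking (Theorem \ref{Kaim}), Harnack to pass between $x_n$ and $\g_\o(\lfloor l n\rfloor)$, and finally the identification of $\lim_n d_G(1,x_n)/n$ with $h$ --- is exactly the paper's. The one genuine divergence is in that last step. The paper simply cites Blach\`ere--Haissinsky--Mathieu \cite[Theorem 1.1]{BHM08}, which asserts that the Green speed equals the entropy for an arbitrary transient random walk of finite entropy on any countable group (and, as the paper's remark stresses, without any symmetry assumption on $\m$). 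You instead prove this identity from scratch: Kingman gives existence of the limit $\ell_G$; the trivial bound $F(1,x)\ge\m^{\ast n}(x)$ plus Shannon--McMillan--Breiman gives $\ell_G\le h$; and for $\ell_G\ge h$ you test $\m^{\ast n}$ against the normalized first-passage weights $F(1,\cdot)/Z_m$ on each sphere via Gibbs' inequality, using the hyperbolicity-specific normalization $Z_m\asymp 1$ from Proposition \ref{comp} (equivalently, $\b(1)=0$ from Lemma \ref{th01}). This buys you a self-contained argument entirely inside the paper's toolkit, at the cost of generality: your lower bound leans on the Ancona-type counting estimate and so does not recover the group-independent statement of \cite{BHM08}. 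The relative-entropy computation and the bookkeeping ($O(n)$ occupied shells, $|\log Z_m|=O(1)$, $H(\m^{\ast n})\ge hn$ by subadditivity) all check out, as does the passage from the radii $e^{-\e l n}$ to all $r\to 0$ and the final appeal to the a.e.\ pointwise-dimension criterion for $\dim_H\n$.
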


\begin{proof}
Note that by Lemma \ref{shadow}, it is enough to show the claim for the shadows $S(g, R)$: 
Fix an $R > 4\d$ and for $\n$-a.e.\ $\x$ in $\partial \G$, take an arbitrary geodesic $\g$ emanating from $1$ towards $\x$, 
we shall show that 
$
-(1/n)\log \n\left(S(\g(n), R)\right) \sim h/l.
$
Notice that $\n\left(S(\g(n), R)\right) \asymp G(1, \g(n))$ by Theorem \ref{Gibbs}, and thus
$
(1/n)\log \n\left(S(\g(n), R)\right) \sim (1/n)\log G(1, \g(n))
$
for such a geodesic ray.
By Theorem \ref{Kaim}, for $\Pr$-almost every sample path of the random walk, there is a geodesic ray $\g_\o$ from $1$ converging to the point $\x$ in the boundary such that $d(x_n, \g_\o(\lfloor l n \rfloor))=o(n)$, where $l>0$ is the drift.
The triangular inequality of $d_G$ shows that 
$$
|d_G(1, x_n) - d_G(1, \g_\o(\lfloor l n\rfloor))| \le \max\{d_G\left(x_n, \g_\o(\lfloor l n\rfloor)\right), d_G\left(\g_\o(\lfloor l n\rfloor), x_n\right)\}.
$$
The inequality $d_G(x, y) \le Ld(x, y)$ in Lemma \ref{qi} implies that
$
(1/n)d_G(1, \g_\o(\lfloor l n\rfloor)) \sim (1/n)d_G(1, x_n).
$
The drift of the random walk with respect to $d_G$ (the Green speed) coincides with the entropy $h$ \cite[Theorem 1.1]{BHM08}.
Therefore we obtain 
\begin{equation}\label{Greenspeed}
\frac{1}{n}d_G(1, \g_\o(\lfloor l n\rfloor)) \sim h
\end{equation}
 for $\Pr$-a.e.\ $\o$.
Above all, it holds that
$
-(1/\lfloor l n\rfloor)\log \n\left(S(\g_\o(\lfloor l n\rfloor), R)\right) \sim h/l
$
for $\Pr$-a.e.\ $\o$. 
The first assertion yields the second e.g., \cite[Theorem 7.1, Chapter 2, p.42]{Pe}, by adapting to the present setting.
\end{proof}

\begin{remark}
In the above proof, it is essential that the Green speed coincides with the entropy \cite{BHM08}.
We note that it is shown for $\m$ not necessarily symmetric in their paper.
\end{remark}

Let us define Hausdorff spectrum of the measure $\n$.
For every real number $\a$, we consider the following set:
\begin{equation*}
E_\a=\left\{\x \in \partial \G \ \Big| \ \lim_{r \to 0}\frac{\log \n\left(B(\x, r)\right)}{\log r}=\frac{\a}{\e} \right\}.
\end{equation*}
The set $E_\a$ is possibly empty for some $\a$.
We call $\dim_H E_\a$ as the function of $\a$ the Hausdorff spectrum of the measure $\n$.
Let $\a_{\min}:=-\lim_{\th \to +\infty}\b'(\th)$ and $\a_{\max}:=-\lim_{\th \to -\infty}\b'(\th)$.
Denote by $f(\a)=\inf_{\th \in \R}\{\a \th +\b(\th)\}$ the Legendre transform of $\b$ which is concave and continuous on $[\a_{\min}, \a_{\max}]$. 
Equivalently, $f(\a)=-\th \b'(\th)+\b(\th)$ for every $\a=-\b'(\th)$.
Note that the interval $[\a_{\min}, \a_{\max}]$ is a singleton if and only if $\b$ is affine.

\begin{theorem}\label{Haus}
For every finitely supported admissible probability measure $\m$ on every non-elementary hyperbolic group $\G$ equipped with a word metric,
the Hausdorff spectrum of the corresponding harmonic measure $\n$ is given by
\begin{equation*}
\dim_H E_\a=\frac{f(\a)}{\e},
\end{equation*}
for every $\a \in (\a_{\min}, \a_{\max})$.
Moreover, the interval $[\a_{\min}, \a_{\max}]$ is bounded in $\R_{>0}$ and 
$E_\a=\emptyset$ for every $\a \notin [\a_{\min}, \a_{\max}]$.
\end{theorem}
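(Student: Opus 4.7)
The plan is to prove the formula by matching upper and lower bounds for $\dim_H E_\a$, plus a direct argument for the emptiness and boundedness claims. The central bridge is the translation provided by Theorem \ref{Gibbs} and Corollary \ref{Gibbsth}: for $\xi$ in a shadow $S(g(n), R)$ with $g(n)$ on a geodesic ray from $1$ to $\xi$, one has $\n(B(\xi, e^{-\e n})) \asymp G(1, g(n))$ and $\m_\th(B(\xi, e^{-\e n})) \asymp G(1, g(n))^\th e^{-n\b(\th)}$. Consequently $\xi \in E_\a$ is equivalent to $(1/n)\log G(1, g(n)) \to -\a$ along every geodesic ray to $\xi$; and on such a $\xi$, the local dimension of $\m_\th$ equals $(\th\a + \b(\th))/\e$, which, for the unique $\th_\a$ with $\a = -\b'(\th_\a)$, collapses to $f(\a)/\e$.

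For the upper bound, I would fix $\eta > 0$ and introduce $A_n(\a, \eta) := \{g \in S_n : G(1, g) \in [e^{-n(\a+\eta)}, e^{-n(\a-\eta)}]\}$. Each $\xi \in E_\a$ eventually lies in a shadow $S(g(n), R)$ with $g(n) \in A_n(\a, \eta)$. Applying Corollary \ref{Gibbsth} to $\m_{\th_\a}$ yields $\m_{\th_\a}(S(g, R)) \gtrsim e^{-n(f(\a) + \eta|\th_\a|)}$ for each $g \in A_n(\a, \eta)$, while the bounded overlap of shadows (used already in Lemma \ref{th01} and Proposition \ref{comp}) forces $|A_n(\a, \eta)| \lesssim e^{n(f(\a) + \eta|\th_\a|)}$. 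Covering the level sets $E_\a(N, \eta)$ that exhaust $E_\a$ by these shadows of diameter $\lesssim e^{-\e n}$ then bounds the $s$-dimensional Hausdorff premeasure for every $s > (f(\a) + \eta|\th_\a|)/\e$; letting $\eta \downarrow 0$ yields $\dim_H E_\a \le f(\a)/\e$.

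For the lower bound I would use $\m_{\th_\a}$ again. It is ergodic for the $\G$-action by Corollary \ref{ergodic}, and the argument in the proof of Proposition \ref{ergodic-differential}, combined with Theorem \ref{am=mth} and relation (\ref{cocycle}), transfers the Birkhoff convergence $(1/n) S_n \f(\o) \to \int \f\, d\m_i = \b'(\th_\a)$ on the symbolic side to $(1/n)\log G(1, g(n)) \to -\a$ for $\m_{\th_\a}$-a.e.\ $\xi$ on the boundary. In particular $\m_{\th_\a}(E_\a) = 1$, and by the translation of the first paragraph the local dimension of $\m_{\th_\a}$ at $\m_{\th_\a}$-a.e.\ $\xi \in E_\a$ equals $f(\a)/\e$. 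The mass distribution principle then gives $\dim_H E_\a \ge f(\a)/\e$, closing the argument.

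Finally, for the boundedness and emptiness claims, the bounds $c^{|x|} \le G(1, x) \le Ce^{-\l|x|}$ appearing in the proof of Lemma \ref{th01} yield $v - L\th \le \b(\th) \le v - \l\th$ for $\th \ge 0$ (reverse for $\th \le 0$), with $L = -\log c$, so convexity forces $\a_{\min}, \a_{\max} \in [\l, L] \subset (0, \infty)$. If a point $\xi \in E_\a$ with $\a > \a_{\max}$ existed, then for any $\th < 0$ and $n$ large one would have $G(1, g(n))^\th \ge e^{|\th|(\a - o(1))n}$, while Proposition \ref{comp} gives $\sum_{x \in S_n} G(1, x)^\th \asymp e^{n\b(\th)}$, forcing $\a \le -\b(\th)/\th$ and, as $\th \to -\infty$, $\a \le \a_{\max}$; the case $\a < \a_{\min}$ is symmetric. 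The main obstacle will lie in the lower bound: the ergodic-theoretic identification of $\b'(\th)$ with the almost-sure limit of $(1/n)\log G(1, g(n))$ under $\m_\th$ draws on the full strength of the thermodynamic formalism of Section \ref{Sym} and on the continuous differentiability of $\b$ granted by Corollary \ref{conti-diff}.
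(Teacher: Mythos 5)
Your proposal is correct and arrives at the right formula, but it follows a genuinely different route from the paper's at nearly every step. For the key fact $\m_{\th_\a}(E_\a)=1$, the paper does not go through the Birkhoff-theorem transfer of Proposition \ref{ergodic-differential}; it runs an exponential Chebyshev estimate
$\m_\th\left(\{\x : \n(S(\g(n),R))\ge e^{-n(\a-s)}\}\right)\le\int_{\partial\G} e^{n(\a-s)r}\,\n(S(\g(n),R))^r\,d\m_\th(\x)$,
evaluates the right-hand side via the two Gibbs properties and Proposition \ref{comp} as $Ce^{n(\a-s)r}e^{n(\b(\th+r)-\b(\th))}$, and concludes by differentiability of $\b$ at $\th$ plus Borel--Cantelli. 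Your ergodic route is legitimate---the proof of Proposition \ref{ergodic-differential} does yield $\m_\th(E_i)=1$ for the $\G$-invariant set on which $(1/n)\log F(1,\g(n))\to\b'(\th)$, which is $E_\a$ after the shadow/ball translation---but the paper's large-deviation argument is more self-contained, using only the Gibbs property and the $C^1$-regularity of $\b$. For the dimension, the paper does not separate an upper bound via the counting estimate on $A_n(\a,\eta)$: it computes the local dimension of $\m_{\th_\a}$ along shadows, which equals $(\th_\a\a+\b(\th_\a))/\e$ at \emph{every} point of $E_\a$, and invokes the Billingsley/Frostman-type lemma for both inequalities at once; your covering argument is a standard equivalent of the upper half. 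Finally, for the boundedness of $[\a_{\min},\a_{\max}]$ and the emptiness of $E_\a$ outside it, the paper works on the symbolic side (weak limits of the Gibbs measures $m_\th$ as $\th\to\pm\infty$, respectively weak limits of the empirical measures $m_{\o,n}$, combined with the variational principle), whereas your alternative---sandwiching $e^{-L|x|}\le G(1,x)\le Ce^{-\l|x|}$ (the lower bound being Lemma \ref{qi}) to trap the recession slopes of the convex function $\b$ in $[-L,-\l]$, and comparing the single term $G(1,\g(n))^\th$ against $\sum_{x\in S_n}G(1,x)^\th\asymp e^{n\b(\th)}$ to force $-\a\th\le\b(\th)$ for all $\th$ and hence $\a\in[\a_{\min},\a_{\max}]$---is more elementary and bypasses the thermodynamic formalism for these two claims entirely. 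Both approaches are sound; the paper's keeps everything uniform with Section \ref{Sym}, while yours minimizes the reliance on it where that is possible.
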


\begin{proof}
Recall that $\b$ is continuously differentiable on $\R$ by Corollary \ref{conti-diff}.
For each $\th \in \R$, the derivative of $\b$ at $\th$ has a strictly negative value since $\b$ is convex and $\b(\th) \to \pm \infty$ (resp.) as $\th \to \mp \infty$ (resp.) by Lemma \ref{th01}.
Let us take $\a=-\b'(\th) \in (\a_{\min}, \a_{\max})$ if $\a_{\min} \neq \a_{\max}$; or, $\a=\a_{\min}$ (or $\a_{\max}$) if $\a_{\min}= \a_{\max}$.
First, we will show that $\m_\th (E_\a)=1$ for such a $\th$.
Consider the following set: 
For an arbitrary small enough $s>0$ and an arbitrary large enough $n$, the set of points $\x$ in $\partial \G$, to which there exists a geodesic ray $\g$ from $1$ converging
such that $\n\left(S(\g(n), R)\right) \ge e^{-n(\a -s)}$.
The measure $\m_\th$ of such a set is bounded from above by for every $r>0$,
\begin{equation}\label{ineqHS}
\m_\th\left(\{\x \in \partial \G \ | \ \n\left(S(\g(n), R)\right) \ge e^{-n(\a -s)}\}\right) \le \int_{\partial \G}e^{n(\a-s)r}\n\left(S(\g(n), R)\right)^r d\m_\th(\x).
\end{equation}
(The map $\x \mapsto \g(n)$ has to be measurable; this is obtained as in Theorem \ref{Kaim}.)
The right hand side of (\ref{ineqHS}) is bounded from above by
$$
\sum_{x \in S_n}e^{n(\a-s)r}\n\left(S(x, R)\right)^r \m_\th \left(S(x, R)\right).
$$
By the Gibbs property of $\m_\th$ (Corollary \ref{Gibbsth}),
we have $\m_\th \left(S(x, R)\right) \asymp G(1, x)^\th e^{-\b(\th)|x|}$.
Together with $\n\left(S(x, R)\right) \asymp G(1, x)$ by Theorem \ref{Gibbs},
the ingredient of the above summation is bounded by $Ce^{n(\a-s)r}G(1, x)^{\th+r}e^{-\b(\th)n}$.
Since $\sum_{x \in S_n}G(1,x)^{\th+r} \asymp e^{\b(\th+r)n}$ by Proposition \ref{comp}, we eventually obtain a bound by
$Ce^{n(\a-s)r}e^{(\b(\th+r)-\b(\th))n}$.
Since $\b$ is differentiable at $\th$, we have 
$
\b(\th+r)-\b(\th)=-\a r+o(r)
$
as $r \downarrow 0$.
Therefore for all small enough $r>0$ there exists a constant $c(s, r)>0$ such that
\begin{equation*}
\m_\th\left(\{\x \in \partial \G \ | \ \n\left(S(\g(n), R)\right) \ge e^{-n(\a -s)}\}\right) \le Ce^{-nc(s,r)}.
\end{equation*}
It follows that
$$
\liminf_{n \to \infty}\frac{\log \n\left(S(\g(n), R)\right)}{-n} \ge \a-s, \ \text{for $\m_\th$-a.e.\ $\x$}.
$$
In a similar way, by taking the reverse inequality $\n\left(S(\g(n), R)\right) \le e^{-n(\a +s)}$ in the beginning and replacing $r$ by $-r$, we also get
$$
\limsup_{n \to \infty}\frac{\log \n\left(S(\g(n), R)\right)}{-n} \le \a+s, \ \text{for $\m_\th$-a.e.\ $\x$}.
$$
We conclude that 
$
\lim_{n \to \infty}(-1/n)\log \n\left(S(\g(n), R)\right)=\a,
$
for $\m_\th$-a.e.\ $\x$.
The local dimension (times the parameter $\e$) of $\m_\th$ is obtained by:
$$
\lim_{n \to \infty}\frac{\log \m_\th \left(S(\g(n), R)\right)}{-n}=\lim_{n \to \infty}\frac{\th \log \n \left(S(\g(n), R)\right)-\b(\th)n}{-n}=\th \a+\b(\th),
$$
for $\m_\th$-a.e.\ $\x$.
Therefore, comparing the shadows with the balls by Lemma \ref{shadow}, we have $\e \dim_H E_\a=\th \a+\b(\th)$. 
(On the estimate of Hausdorff dimension, see, e.g. \cite[Proposition 4.9, p.74]{F}, where the proof is written in the $\R^n$ case, but is also adapted to compact metric spaces.)

Next, we shall show that $[\a_{\min}, \a_{\max}]$ is a bounded interval in $\R_{>0}$.
For each $\th \in \R$, denote by $m_\th$ the Gibbs measure supported on $\SS_\Cc$ corresponding to a maximal component $\Cc$ for $\th \f$.
Let $m_\infty$ be a weak limit of $\{m_\th\}$ as $\th$ tends to $+\infty$.
Since for each $\th$, we have
$
\int \f d m_\th=\b'(\th)
$
by Proposition \ref{pert},
we obtain
$
\int \f d m_{+\infty}=-\a_{\min}.
$
In the same way, we obtain
$
\int \f d m_{-\infty}=-\a_{\max},
$
where $m_{-\infty}$ is a weak limit of $\{m_\th\}$ as $\th$ tends to $-\infty$.
By the variational principle Proposition \ref{vp}, for arbitrary $\th_0$ and $\th$, we have
$$
h_{m_\th}+\th_0 \int \f d m_\th \le \b(\th_0).
$$
Choose a positive $\th_0$ such that $\b(\th_0)<0$.
Since $h_{m_\th}\ge 0$, we have
$
-\int\f dm_\th \ge -\b(\th_0)/\th_0>0
$
for every $\th$, and thus we get
$
\a_{\min} >0.
$
On the other hand, 
$$
\a_{\max}= -\lim_{\th_j \to -\infty}\int\f d m_{\th_j} \le \sup_{\o \in \SS}(-\f(\o)) < \infty.
$$

Finally, we shall show that $E_\a=\emptyset$ for every $\a \notin [\a_{\min}, \a_{\max}]$.
Suppose that for an $\a \notin [\a_{\min}, \a_{\max}]$,
there exists a $\x$ in $\partial \G$ such that
$
\lim_{r \to 0}\left(\log \n\left(B(\x, r)\right)\right)/(\log r)=\a/\e.
$
Then we have an infinite path $\o$ starting at $s_\ast$ such that $\a_\ast \o=\x$ and 
$
(1/n)S_n\f(\o) \to -\a
$
as $n$ tends to $\infty$.
Notice that such an infinite path $\o$ eventually stays in some component $\Cc$ (not necessarily maximal for $\f$).
Define a measure on $\SS$ by
$$
m_{\o, n}:=\frac{1}{n}\sum_{k=0}^{n-1}\d_{\s^k \o}.
$$
Let $m_\o$ be a weak limit of $\{m_{\o, n}\}$.
Note that the measure $m_\o$ is $\s$-invariant and supported in $\SS_\Cc$.
The variational principle Proposition \ref{vp} implies that for every $\th$,
$$
h_{m_\o}(\s)+\th \int \f d m_\o \le \b(\th),
$$
i.e., the line defined by the left hand side lies under the convex curve defined by the right hand side in the above inequality.
Thus there exists a $\th \in [-\infty, +\infty]$ such that
\begin{equation}\label{range}
\int \f dm_\o=\int \f dm_\th,
\end{equation}
where we have $\th=-\infty$ or $+\infty$ when the line is asymptotic to $\b(\th)$.
Here the right hand side of (\ref{range}) equals to $\b'(\th)$ which is in $[-\a_{\max}, -\a_{\min}]$.
Notice that since
$
\int \f d m_{\o, n_j}=(1/n_j)S_{n_j}\f(\o) \to \int \f d m_\o,
$
we have $-\a=\int \f d m_\o$.
Hence it has to hold that $-\a \in [-\a_{\max}, -\a_{\min}]$; a contradiction.
We complete the proof.
\end{proof}

\begin{remark}
In fact, we are able to show that the set of irregular points
$
E_{irr}=\left\{\x \in \partial \G \ | \ \lim_{r \to 0}\left(\log \n\left(B(\x, r)\right)\right)/(\log r) \text{ does not exist.}\right\}
$
is not empty.
Indeed, for each component $\Cc$ (whose cardinality is larger than one), one can find a path $\o$ in $\SS_\Cc$ such that the Birkhoff averages $(1/n)S_n\f(\o)$ do not converge by a result of Barreira and Schmeling \cite[Theorem 2.1]{BS}.
It has been also shown that such paths form a set of full topological entropy [ibid].
It is expected to hold that $\dim_H E_{\a}=f(a)/\e$ for $\a=\a_{\min}$ and $\a_{\max}$ as well, but those would require some additional argument (e.g., \cite[Section 9.2]{PU}).
\end{remark}

\begin{remark}
If an automaton has a unique component whose cardinality is larger than one, then $\b$ is analytic by definition and thus the Legendre transform $f(\a)$ is also analytic on $(\a_{\min}, \a_{\max})$.
This is the case for free groups and for surface groups, i.e., the fundamental groups of closed Riemann surfaces of genus greater than one, for example.
The condition that an automaton has a single non-trivial component corresponds to Assumption 1 in \cite{HMM}.
In general, we have shown that $\b$ is not only continuously differentiable but also analytic except for at most finitely many points, hence
it would be reasonable to expect that $\b$ is also analytic for every non-elementary hyperbolic group and every finitely supported admissible probability measure $\m$ on it.
\end{remark}

\subsection{Proof of Theorem \ref{thm1}}

First, we prove the following proposition which is a slight extension of \cite[Proposition 5.4]{BHM11}.
Let $\check\m$ be the reflected measure of $\m$, i.e., $\check \m(x)=\m(x^{-1})$.
Notice that if $\m$ is finitely supported admissible on $\G$, then the reflected measure $\check \m$ is also the case.
Denote by $\n$ and $\check \n$ the corresponding harmonic measures on $\partial \G$ for $\m$ and $\check \m$, respectively.

\begin{proposition}\label{bounded}
If the $D$-Hausdorff measure $\H^D$ and the harmonic measure $\n$, and $\H^D$ and the reflected harmonic measure $\check \n$ are mutually absolutely continuous, respectively,  
then their Radon-Nikodym derivatives are uniformly bounded from above and from below, i.e., $\H^D \asymp \n$ and $\H^D \asymp \check \n$.
\end{proposition}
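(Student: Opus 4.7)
My plan is to reduce the bounded-density conclusion to the uniform shadow comparisons $G(1,g) \asymp e^{-v|g|}$ and $G(g,1) \asymp e^{-v|g|}$ for all $g \in \G$, and then deploy Lebesgue differentiation on doubling measures together with submultiplicativity via the Ancona inequality.

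First I would assemble the shadow estimates. Theorem~\ref{Gibbs} gives $\n(S(g,R)) \asymp G(1,g)$, and applied to the reflected walk $\check{\m}$, it gives $\check{\n}(S(g,R)) \asymp G(g,1)$. Corollary~\ref{Gibbsth} at $\th = 0$ combined with Corollary~\ref{FGPSth01} gives $\H^D(S(g,R)) \asymp e^{-v|g|}$. So the claim reduces to showing that $U(g) := G(1,g) e^{v|g|}$ and $\check U(g) := G(g,1) e^{v|g|}$ are uniformly bounded above and below over $g \in \G$.

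Second, the measures $\m_0 \asymp \H^D$, $\m_1 = \n$, and $\check{\n}$ are all doubling on $(\partial\G, d_\e)$ by Theorem~\ref{uniqueness}, so the Lebesgue differentiation theorem applies to the Radon--Nikodym derivative $\psi := d\n/d\H^D$, which by hypothesis is $\H^D$-a.e.\ positive and finite. Combining with Lemma~\ref{shadow}, for $\H^D$-a.e.\ $\x \in \partial\G$,
\begin{equation*}
\psi(\x) \asymp \lim_{n \to \infty} U(g_n(\x)),
\end{equation*}
where $g_n(\x)$ is the point at distance $n$ from $1$ on any geodesic ray converging to $\x$; and analogously for $\check\psi := d\check\n/d\H^D$ with $\check U$ in place of $U$. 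Thus $U$ and $\check U$ are bounded away from $0$ and $\infty$ along $\H^D$-a.e.\ geodesic ray.

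Third, I would upgrade the a.e.\ bound to a uniform bound by using the submultiplicativity of $U$ coming from the Ancona inequality~(\ref{A}) and the Harnack inequality~(\ref{H}): for an arbitrary $g \in \G$, choose an $\H^D$-generic $\x$ whose geodesic approximation passes within $O(\d)$ of $g$ at level $|g|$, which is possible because $\H^D(S(g,R)) > 0$; the Harnack inequality then transfers the bound on $U(g_{|g|}(\x))$ to $U(g)$, and the analogous argument with $\check{\n}$ handles $\check U$. The main obstacle is precisely this propagation step: Lebesgue differentiation only yields generic-direction control, not uniform control on $\G$, and bridging this gap requires combining the multiplicative cocycle from Ancona with the full support of $\H^D$ on the boundary. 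In the symmetric case treated by Blach\`ere--Haissinsky--Mathieu the argument runs more directly via the genuine Green metric $d_G$, whereas in the non-symmetric case $d_G$ is only a quasi-metric (Lemma~\ref{qi}), explaining why the proposition must invoke both absolute continuity hypotheses $\H^D \asymp \n$ and $\H^D \asymp \check\n$.
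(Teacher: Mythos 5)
Your reduction to the uniform two-sided bounds $G(1,g)e^{v|g|}\asymp 1$ and $G(g,1)e^{v|g|}\asymp 1$ is consistent with the paper (cf.\ Remark~\ref{rigidity}), and your second step is correct as far as it goes: Lebesgue differentiation for doubling measures plus Lemma~\ref{shadow} and the shadow estimates does give, for $\H^D$-a.e.\ $\x$, that $U(g_n(\x))\asymp \psi(\x)$ for all $n\ge N(\x)$. But this only says that $\psi$ is finite and positive almost everywhere; it gives no uniform bound, since both $\psi(\x)$ and the threshold $N(\x)$ depend on $\x$. Your third step is therefore circular: to bound $U(g)$ for an arbitrary $g$ you pick a generic $\x$ with $g$ near its ray at level $|g|$ and invoke Harnack, but the bound you transfer is $C\,\psi(\x)$ (and only valid once $|g|\ge N(\x)$), and $\operatorname{ess\,sup}\psi$ could a priori be infinite. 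Almost-multiplicativity of $U$ along rays (Ancona) does not repair this, because the increments $G(g_m,g_n)e^{v(n-m)}$ are exactly the quantity you are trying to control. Passing from ``mutually absolutely continuous'' to ``comparable'' is the entire content of the proposition, and your argument assumes it at the propagation step.

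The paper closes this gap by a genuinely different mechanism on the double boundary. It introduces $\phi(x,y)=G(x,y)/(G(x,1)G(1,y))$, extends it H\"older-continuously to $\partial^2\G$, and observes that $d\tilde\n=\phi\, d(\check\n\otimes\n)$ is \emph{exactly} invariant under the diagonal $\G$-action, while $d\tilde\m_0=d(\m_0\otimes\m_0)/e^{-2v(\x|\y)}$ is quasi-invariant with \emph{uniform} constants. Hence the density $\tilde F=\check f(\x)f(\y)e^{2v(\x|\y)}/\phi(\x,\y)$ satisfies $\tilde F(g\x,g\y)\asymp\tilde F(\x,\y)$ with constants independent of $g$; since $\tilde F$ is bounded above and below on some set of positive $\tilde\n$-measure, the ergodicity of $\check\n\otimes\n$ for the diagonal action (Kaimanovich) spreads this bound to $\tilde\n$-a.e.\ point with uniform constants, and local boundedness of $\phi(\x,\y)e^{-2v(\x|\y)}$ then forces $f$ and $\check f$ to be bounded above and below. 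This bounded-cocycle-plus-ergodicity argument is precisely the ingredient your proposal lacks, and it is also why both hypotheses ($\H^D\sim\n$ and $\H^D\sim\check\n$) enter: the construction needs the reflected walk to make $\tilde\n$ diagonal-invariant.
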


\begin{proof}
Define for $x, y$ in $\G$,
$$
\phi(x, y):=\frac{G(x, y)}{G(x, 1)G(1, y)}.
$$
There is an extension of $\phi$ on $\partial^2\G:=\partial \G\times \partial \G \setminus \text{\rm diagonal}$, which is H\"older continuous on each compact subset. 
(See \cite[Proposition 3.17]{L}. It is written for free groups case, but the argument applies to the present setting as well.)
Indeed, take $\x$ and $\y$ in $\partial \G$ ($\x\neq \y$) and $r>0$ such that $B(\x, r) \cap B(\y, r) =\emptyset$ (the balls are defined in $\G\cup \partial \G$).
For every $x \in B(\x, r)\cap \G$ and every $y \in B(\y, r)\cap \G$, the function $\phi(x, y)$ is bounded by the Ancona inequality (\ref{A}) and the Harnack inequality (\ref{H}).
It holds that uniformly for a fixed $x$, the function $y \mapsto G(x, y)/G(1, y)$ is H\"older continuous on $B(\y, r)$, and
uniformly for a fixed $y$, the function $x \mapsto G(x, y)/G(x, 1)=\check G(y, x)/\check G(1, x)$ is also H\"older continuous on $B(\x, r)$ by Theorem \ref{Holder}.
Therefore the function $\phi$ is extended on $\partial^2\G$ and is H\"older continuous on each compact subset.
The measure $d\tilde \n(\x, \y):=\phi(\x, \y) d(\check \n \otimes \n)(\x, \y)$ on $\partial^2\G$ is $\s$-finite and invariant under the diagonal action of $\G$, since
\begin{align*}
\frac{\phi(\x, \y)}{\phi(g^{-1}\x, g^{-1}\y)}		=\lim_{x \to \x, \ y \to \y}\frac{G(x, g)G(g, y)}{G(x, 1)G(1, y)} 
											=\check K(g, \x)K(g, \y) 
											=\frac{dg_{\ast}(\check \n \otimes \n)}{d(\check \n \otimes \n)}(\x, \y).
\end{align*}
Let $\m_0$ be a Patterson-Sullivan measure on $\partial \G$.
Define the measure on $\partial^2\G$ by
$$
d\tilde \m_0 (\x, \y):=\frac{d(\m_0 \otimes \m_0)(\x, \y)}{\exp(-2v (\x|\y))}.
$$
Here the measure $\tilde \m_0$ is $\s$-finite and $\G$-quasi-invariant on $\partial^2\G$, i.e., there exists a constant $C\ge 1$ such that
$
C^{-1}\tilde \m_0(A) \le \tilde \m_0(g A) \le C\tilde \m_0(A)
$
for all $g$ in $\G$ and all Borel sets $A$ in $\partial^2\G$ \cite[Corollaire 9.4]{C}.
Suppose that $\m_0$ and $\n$, and $\m_0$ and $\check \n$ are mutually absolutely continuous, respectively.
We have $d\m_0=fd\n$ and $d\m_0=\check f d\check \n$ for some positive Borel measurable functions $f$ and $\check f$ on $\partial \G$.
We obtain 
$
d\tilde \m_0(\x, \y)=\tilde F(\x, \y)d\tilde \n(\x, \y),
$
where 
$$
\tilde F(\x, \y):=\check f(\x)f(\y)\frac{\exp(2v(\x|\y))}{\phi(\x, \y)}.
$$
First, we shall prove that $\tilde F$ is uniformly bounded from above and from below $\tilde \n$-a.e.
Define the set $A_C:=\{(\x, \y) \in \partial^2 \G \ | \ C^{-1} \le \tilde F(\x, \y) \le C\}$, then there exists a constant $C \ge 1$ such that
$
\tilde \n(A_C)>0.
$
The measure $\tilde \n$ is ergodic for the diagonal action of $\G$ since $\check \n \otimes \n$ is ergodic \cite[Theorem 6.3]{K}.
Therefore, for $\tilde \n$-a.e.\ $(\x, \y)$ in $\partial^2\G$, there exists $g$ in $\G$ such that $g(\x, \y) \in A_C$. 
Since $\tilde \m_0$ is $\G$-quasi-invariant and $\tilde \n$ is $\G$-invariant, we have $\tilde F(g\x, g\y) \asymp \tilde F(\x, \y)$ for all $g$ in $\G$ and for $\tilde \n$-a.e.\ $(\x, \y)$.
Hence there exists a constant $C'\ge 1$ such that $C'^{-1}\le \tilde F(\x, \y) \le C'$ for $\tilde \n$-a.e.\ $(\x, \y)$.
Thus we have
\begin{equation*}
\check f(\x) f(\y) \asymp \frac{\phi(\x, \y)}{\exp(2v(\x|\y))}, \ \ \text{$\tilde \n$-a.e.}
\end{equation*}
Suppose that $f$ is unbounded on $B(\y,r)$ for all small enough $r>0$. Let $\x\neq \y$ such that $\check f(\x)>0$.
The right hand side $\phi(\x, \y')/\exp(2v(\x|\y'))$ is uniformly bounded for $\y' \in B(\y,r)$ for all small enough $r$, while the left hand side $\check f(\x)f(\y')$ is not; a contradiction.
This implies that $f$ (and also $\check f$) has to be bounded on $\partial \G$, and the same argument yields the boundedness of $1/f$ (and $1/\check f$).
The claim follows.
\end{proof}

The logarithmic volume growth $v$ of $(\G, d)$ has an interpretation as the Hausdorff dimension of the boundary $\partial \G$, i.e., 
$
\dim_H \partial \G=v/\e
$
(\cite[Corollaire 7.6]{C} and \cite[Corollary 2.5.10]{Ca}).
This leads a natural implication of the fundamental inequality $h \le l v$ as the inequality of dimensions:
$
\dim_H \n \le \dim_H \partial \G,
$
as it is pointed out by Vershik \cite[Introduction, p.670]{V}.
Below we show that $\n$ is of maximal Hausdorff dimension if and only if $\n$ and $\H^D$ are comparable.
Recall that $D=v/\e$.
The following is a restatement of Theorem \ref{thm1} from Section \ref{intro}.

\begin{theorem}\label{fund}
For every finitely supported admissible probability measure $\m$ on every non-elementary hyperbolic group $\G$ equipped with a word metric,
it holds that
$
h = l v,
$
if and only if the corresponding harmonic measure $\n$ and the $D$-Hausdorff measure $\H^D$ on the boundary $\partial \G$
are mutually absolutely continuous and their densities are uniformly bounded from above and from below, i.e.,
$\n \asymp \H^D$.
\end{theorem}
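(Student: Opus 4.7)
\emph{Direction} $\n \asymp \H^D \Rightarrow h = lv$. Since $\H^D$ is a positive finite Hausdorff measure on $(\partial\G, d_\e)$ of dimension $D = v/\e$, a Frostman/density argument gives that $\lim_{r\to 0} \log \H^D(B(\x, r))/\log r = D$ at $\H^D$-a.e.\ $\x$. Mutual absolute continuity with uniformly bounded densities transfers this to the same limit for $\n$ at $\n$-a.e.\ $\x$, so the local dimension of $\n$ equals $v/\e$ almost everywhere. Theorem~\ref{dim-ent-speed} identifies the $\n$-a.e.\ local dimension as $h/(\e l)$, and equating the two values yields $h = lv$.

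\emph{Direction} $h = lv \Rightarrow \n \asymp \H^D$. By Theorem~\ref{dim-ent-speed}, the local dimension of $\n$ is $v/\e$ at $\n$-a.e.\ $\x$. The proof of Theorem~\ref{Haus} shows that $\m_\th$ is concentrated on the level set $E_\a$ with $\a = -\b'(\th)$; since $\n = \m_1$ (Corollary~\ref{FGPSth01}), we deduce $-\b'(1) = v$. Combined with the endpoint values $\b(0) = v$ and $\b(1) = 0$ from Lemma~\ref{th01} and convexity of $\b$ from Lemma~\ref{beta}, the tangent at $\th = 1$ of slope $-v$ passes through $(0, v)$, forcing $\b(\th) = v(1 - \th)$ for all $\th \in [0, 1]$.

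\emph{Reduction to a uniform shadow estimate.} Applying Corollary~\ref{Gibbsth} at $\th = 0$ and $\th = 1$, together with $\m_0 \asymp \H^D$ from Corollary~\ref{FGPSth01} and Theorem~\ref{Gibbs}, yields $\H^D(S(g, R)) \asymp e^{-v|g|}$ and $\n(S(g, R)) \asymp G(1, g)$ for all $g \in \G$ and $R \ge R_0$. Hence $\n \asymp \H^D$ on shadows is equivalent to the uniform two-sided estimate
\[
G(1, g) \asymp e^{-v|g|} \qquad \text{for every } g \in \G.
\]
Once this is granted, the shadows $\{S(g, R)\}_{g \in S_n}$ cover $\partial\G$ with bounded overlaps, both $\n$ and $\H^D$ are doubling (as in the proof of Theorem~\ref{uniqueness}), and a Vitali-covering argument extends the shadow comparison to all Borel sets; Proposition~\ref{bounded} then upgrades mutual absolute continuity to the uniform density bounds asserted by the theorem.

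\emph{The crux: uniform estimate from affinity of $\b$.} Via the identification $\b(\th) = Pr(\th \f)$ (Proposition~\ref{prss}) and the cocycle identity (\ref{cocycle}), the affinity of $\b$ on $[0, 1]$ is the classical signature of cohomological rigidity of the potential. On any component $\Cc$ that is maximal for some $\th \in [0, 1]$, the Gibbs state $m_\th$ has metric entropy $\b(\th) - \th \b'(\th) = v$, matching the topological entropy on $\Cc$; uniqueness of the measure of maximal entropy on a recurrent component then forces $\th \f$ to be cohomologous to the constant $-v\th$ on $\Cc$, and invoking (\ref{cocycle}) we conclude $G(1, g) \asymp e^{-v|g|}$ for every $g$ whose automatic path stays eventually inside a maximal component. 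The main obstacle is to handle the exceptional $g$'s whose automatic path terminates in a non-maximal component, for which the pressure bound only gives $G(1, g) \le C e^{-v|g|}$; to close the gap, I would combine Proposition~\ref{comp} at $\th = 0$ and $\th = 1$ with Lemma~\ref{semisimple} and the shadow-covering identity $\sum_{g \in S_n} \H^D(S(g, R)) \asymp 1$ to show that such exceptional shadows contribute a set of vanishing total mass, so that the uniform shadow comparison indeed holds for all $g$ with non-negligible shadow.
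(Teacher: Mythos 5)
Your first direction, and your derivation that $h=lv$ forces $\b$ to be affine on $[0,1]$ with slope $-v$, match the paper's argument and are fine; so is the cohomological-rigidity step on the maximal components. The gap is in how you close the argument. First, even for $g$ whose automatic path eventually stays in a maximal component, the cohomology $\f=a+u\circ\s-u$ lives on the set of infinite paths \emph{inside} the components, so the resulting two-sided bound $G(1,g)\asymp e^{-v|g|}$ carries a constant that degrades with the time the path spends outside the maximal components before entering; it is not uniform in $g$. Second, your proposed fix for the remaining $g$'s does not work as stated: every shadow satisfies $\H^D(S(g,R))\asymp e^{-v|g|}>0$, so there is no ``$g$ with negligible shadow,'' and showing that the exceptional shadows have small \emph{total} mass only shows that the set of boundary points they eventually determine is null for both measures --- it gives no pointwise lower bound $G(1,g)\ge C^{-1}e^{-v|g|}$ for those $g$. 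What that argument actually yields is exactly the paper's intermediate conclusion: an exhaustion $\partial\G=\bigcup_K\partial\G_K$ (mod null sets for both $\H^D$ and $\n$) on which $\n(\,\cdot\,\cap\partial\G_K)\asymp_K\H^D(\,\cdot\,\cap\partial\G_K)$ with $K$-dependent constants, hence mutual absolute continuity but \emph{not} uniform density bounds.

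The uniformity has to come from a separate mechanism, and this is where your placement of Proposition~\ref{bounded} is off. In the paper, Proposition~\ref{bounded} is not a cosmetic afterthought following a uniform shadow estimate (if you had the uniform estimate for all $g$, the Vitali argument alone would give $\n\asymp\H^D$ and Proposition~\ref{bounded} would be redundant); it is the tool that upgrades mutual absolute continuity to bounded densities, via the ergodicity of $\check\n\otimes\n$ under the diagonal $\G$-action. Crucially, it requires knowing that \emph{both} $\n$ and the reflected harmonic measure $\check\n$ are mutually absolutely continuous with $\H^D$, so one must rerun the entire exhaustion argument for $\check\m$, using that $h=lv$ is equivalent to $\check h=\check l\,v$ because $\check\m^{\ast n}(x)=\m^{\ast n}(x^{-1})$. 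Your proposal never introduces the reflected walk, so the hypothesis of Proposition~\ref{bounded} is not available in your scheme. The uniform estimate $G(1,g)\asymp e^{-v|g|}$ for all $g$ is then a \emph{consequence} of $\H^D\asymp\n$ (Remark~\ref{rigidity}), not an intermediate step one can reach directly from the affinity of $\b$.
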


\begin{remark}
The analogy between the logarithmic volume growth $v$ and the topological entropy is pointed out in \cite[p.681]{V}.
\end{remark}

\begin{proof}[Proof of Theorem \ref{fund}]
Since the inequality 
$
h \le l v
$
is equivalent to
the inequality
$
\dim_H \n \le \dim_H \partial \G
$
by Theorem \ref{dim-ent-speed},
we have the equality 
$
h=l v,
$
as soon as the harmonic measure $\n$ is comparable with the $D$-Hausdorff measure $\H^D$.
We shall prove the converse.
Recall that $\b$ is differentiable at every $\th \in \R$ (Corollary \ref{conti-diff}).
In view of Theorem \ref{Haus},
the Hausdorff spectrum $\dim_H E_\a$ has the maximum $v/\e$ at $\a=-\b'(0)$. 
On the other hand, 
$\dim_H E_\a=h/(\e l)$ at $\a=-\b'(1)$.
Indeed, since $\e \dim_H E_\a=\a$ at $\a=-\b'(1)$ and
$\m_1=\n$, one has $\a=h/l$ (Corollary \ref{FGPSth01} and Theorem \ref{dim-ent-speed}). 
The fundamental observation is that the inequality $h/l \le v$ is strict if and only if $\b$ is strictly convex on $[0,1]$.
Suppose that the equality $h=l v$ holds.
Then $\b$ has to be affine on $[0,1]$ (in fact, on $\R$).
Let us recall that $\b(\th)=Pr(\th \f)=\max_\Cc Pr_{\Cc}(\th \f)$ (Proposition \ref{prss}).
We adapt the notation in Theorem \ref{Gouezel}.
Denote by $\Cc_1, \dots, \Cc_I$ the maximal components at $\th=0$.
Every $Pr_{\Cc_i}(\th \f)$ coincides with $-v \th +v$ on $\R$, 
since each $Pr_{\Cc_i}(\th \f)$ is convex and analytic.
Hence $\Cc_i$, $i=1, \dots, I$ are also precisely maximal components for all $\th \in [0,1]$, in particular, for $\th=1$.
Since we now have $Pr''_{\Cc_i}(\th \f)=0$ for all $\th \in \R$, 
the potential $\f$ is cohomologous to a constant, i.e.,
$\f=a+u\circ \s-u$ for some constant $a$ and some H\"older continuous function $u$ on some subset $\SS'$ of $\SS$,
by \cite[Proposition 4.12]{PP}.
(Although in \cite{PP}, the statement is written for a topologically mixing subshift of finite type, it is adapted to our setting by decomposing into components.)
Here the domain $\SS'$ of $u$ is the union of the sets $\SS_i$ of infinite paths staying in the component $\Cc_i$, i.e., $\SS'=\bigcup \SS_i$.
We use the relation 
$
\Lc_\f^n1_{[E_\ast]}(\emptyset)=\sum e^{S_n\f(\o)},
$
where the summation runs over the paths $\o$ of length $n$ starting at $s_\ast$.
We define the set of paths $\o$ starting at $s_\ast$ satisfying that
for a fixed $K$, the path $\o$ is in one of those components $\Cc_i$ for all $n \ge K$, i.e.,
$$
\SS_K:=\{\o \in \SS \ | \ \text{$\o$ starts at $s_\ast$ and $\o_n \in \bigcup_{i=1}^{I}\Cc_i$ for all $n \ge K$}\}.
$$
Let $\partial \G_K:=\a_\ast (\SS_K)$.
Recall that the components $\Cc_i$, $i=1, \dots, I$, are maximal both at $\th =0$ and at $\th=1$, and $\m_\th \asymp \a_\ast \tilde m_\th$, where $\tilde m_\th$ for $\th=0$ and $1$ are supported in the common set of paths eventually staying in $\bigcup_{i=1}^{I}\Cc_i$.
Therefore we deduce that
\begin{equation}\label{ae}
\partial \G = \bigcup_K \partial \G_K,
\text{ $\H^D$-a.e.\ and also $\n$-a.e.},
\end{equation}
since $\H^D \asymp \m_0$ and $\n=\m_1$ (Corollary \ref{FGPSth01}).
For every $\o \in \SS_K$, we have
\begin{equation}\label{a}
|S_n \f(\o)-an| \le C_K,
\end{equation}
where $C_K$ is a constant depending on $K$.
Indeed, let $\overline \o$ be any extension of $\o$ staying in some component $\Cc_i$, 
it follows that
$
|S_n \f(\o)-S_n \f(\overline \o)| \le C
$
by the H\"older continuity of $\f$, 
and 
$
|S_n \f(\overline \o)-an| \le C_K,
$
by using
$
|S_n \f(\overline \o)-(\f(\s^K\overline \o)+\cdots +\f(\s^{n-1}\overline \o))| \le C_K
$
for all $n \ge K$.
Then we obtain
$$
\Lc_\f^n1_{[E_\ast]}(\emptyset) \ge C_K e^{(a+v)n}.
$$
On the other hand, as in the proof of Lemma \ref{semisimple}, we deduce that there exists some $\e_0>0$ such that
$$
\Lc_\f^n1_{[E_\ast]}(\emptyset) \le Cn^{L}(e^{-n\e_0}+e^{n(a+v)}),
$$
for all $n>0$,
since the summation over paths which do not stay in any $\Cc_i$ contributes as $O(e^{-n\e_0})$ at $\th=1$.
By Proposition \ref{comp}, $\Lc_\f^n1_{[E_\ast]}(\emptyset) \asymp 1$ due to $\b(1)=0$, we conclude that $a=-v$.
Therefore, by (\ref{a}), there exists a constant $C_K>0$ such that 
for every path $\o \in \SS_K$ and
for every $n \ge K$,
we have 
$$
C_K^{-1} e^{-v n} \le e^{S_n\f(\o)} \le C_K e^{-v n}.
$$
Together with
$
G(1, \a_\ast \bar \o_n) \asymp e^{S_n\f(\o)}
$
by (\ref{cocycle}),
we deduce that $G(1, \a_\ast \bar \o_n) \asymp_K e^{-v n}$ for all paths $\o \in \SS_K$ and for every $n \ge K$.
Consider $\partial \G_K$ such that $\n(\partial \G_K)>0$.
For every $\x \in \partial \G_K$,
there exists a path $\o \in \SS_K$ such that $\a_\ast \o=\x$.
Therefore we obtain that there exists a constant $C_K>0$ such that for every $\x \in \partial \G_K$ and for every $n \ge K$,
$$
C_K^{-1} e^{-vn} \le \n\left(S(\a_\ast \bar \o_n, R)\right) \le C_K e^{-vn},
$$ 
by Theorem \ref{Gibbs}, we also have
$
C_K^{-1} r^{v/\e} \le \n\left(B(\x, r)\right) \le C_K r^{v/\e},
$ 
for every $\x \in \partial \G_K$ and for all small enough $r>0$ by Lemma \ref{shadow}.
A standard covering argument yields $\n(\ \cdot \ \cap \partial \G_K) \asymp_K \H^D(\ \cdot \ \cap \partial \G_K)$ (e.g.\ \cite[Corollary 2.5.10]{Ca}).
Since we have (\ref{ae}), 
we conclude that $\H^D$ and $\n$ are mutually absolutely continuous.
Moreover, we shall prove that their Radon-Nikodym derivatives are uniformly bounded from above and from below.
Indeed, notice that the entropy $\check h$ and the drift $\check l$ for the reflected measure $\check \m$ coincide with those for $\m$, respectively, since $\check \m^{\ast n}(x)=\m^{\ast n}(x^{-1})$. 
Thus $h=lv$ is equivalent to $\check h=\check l v$.
As above, we also conclude that $\H^D$ and $\check \n$ are mutually absolutely continuous.
Therefore by Proposition \ref{bounded}, in fact, we have $\H^D \asymp \n$.
\end{proof}

\begin{remark}\label{rigidity}
For a fixed $R \ge R_0$ and for every $g$ in $\G$, we have
$
\H^D(S(g, R)) \asymp e^{-v|g|}
$
by Corollary \ref{Gibbsth} (see also \cite[Proposition 6.1]{C}).
If $\H^D \asymp \n$, then combining with $G(1, g) \asymp \n(S(g, R))$ (Theorem \ref{Gibbs}), we obtain
$
G(1, g) \asymp e^{-v|g|};
$
In other words, there exists a constant $C>0$ such that for every $g$ in $\G$, it holds that
\begin{equation}\label{rigid-estimate}
\left|\log G(1, g)+v|g|\right| \le C.
\end{equation}
The last estimate (\ref{rigid-estimate}) implies that $h=lv$ since the Green speed is the entropy \cite[Theorem 1.1]{BHM08}.
The equivalence between $h=lv$ and (\ref{rigid-estimate}) is proven in \cite[Corollary 1.2 and Theorem 1.5]{BHM11} (see also \cite{Ha}) for every finitely supported admissible symmetric probability measure $\m$ on $\G$.
Compare with results on the rigidity of cocycles in \cite{GMMpre}.
\end{remark}

\section{finitary results}\label{FR}

We show finitary versions of Theorem \ref{dim-ent-speed}, which are inspired by the results on simple random walks on Galton-Watson trees by Lyons, Pemantle and Peres \cite{LPP}.
As a consequence, if $h < l v$, then the random walk is confined in a strictly smaller region in $\G$;
but if $h=l v$, then such confinement does not happen. 
We make it more precise.
Denote by $r$ the diameter of the support of the step distribution $\m$, and by $[S_n]$ the $r$-neighbourhood of the sphere $S_n$ in $\G$ (the annulus).
We define the first hitting time for the random walk $\{x_n\}_{n=0}^{\infty}$ to the annulus $[S_n]$ by
$
\t_n:=\min\{k \ge 0 \ | \ x_k \in [S_n]\},
$
where $\t_n$ is finite $\Pr$-a.s., since the random walk is transient and the annulus is thick enough for the range of the step.
Let $\n_{[n]}$ be the distribution of $x_{\t_n}$, which is supported on $[S_n]$.

\begin{theorem}\label{K}
Let $\hat h:=h/l$. 
\begin{itemize}
\item[(1)] For the sequence of the first hitting points $\{x_{\t_n}\}_{n=0}^{\infty}$,
$$
\lim_{n \to \infty}-\frac{1}{n}\log \n_{[n]}\left(x_{\t_{n}}\right) =\hat h, \ \ \text{$\Pr$-a.s.}
$$
\item[(2)] For every $a>0$, 
$$
\lim_{n \to \infty}\n_{[n]}\left(x \in [S_n] \ \big| \ e^{-n(\hat h+a)} \le \n_{[n]}(x) \le e^{-n(\hat h -a)}\right)=1. 
$$
\item[(3)] For every $a \in (0,1)$,
define $K_n(a)$ as the minimal number of points which form a set carrying the $\n_{[n]}$-measure at least $a$, then
$$
\lim_{n \to \infty}\frac{1}{n}\log K_n(a)=\hat h.
$$
\end{itemize}
\end{theorem}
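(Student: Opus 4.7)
The plan is to derive all three parts from Part (1), and to reduce Part (1) in turn to the comparison
$$\nu_{[n]}(x) \asymp G(1, x), \qquad x \in [S_n],$$
with implicit constants independent of $n$ and $x$. Granting this comparison, Part (1) follows from two ingredients. First, by Theorem \ref{Kaim} we have $d(1, x_k)/k \to l > 0$ almost surely, so $\tau_n/n \to 1/l$ almost surely. Second, the ``Green speed equals entropy'' result of \cite{BHM08} gives $-\log G(1, x_k)/k \to h$ almost surely. Writing
$$\frac{-\log \nu_{[n]}(x_{\tau_n})}{n} = \frac{-\log \nu_{[n]}(x_{\tau_n})}{\tau_n} \cdot \frac{\tau_n}{n},$$
the first factor tends to $h$ (by the comparison and the Green-speed theorem, applied to the subsequence $\tau_n$) and the second to $1/l$, so the product tends to $h/l = \hat h$ almost surely.

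For the comparison lemma, the upper bound $\nu_{[n]}(x) \le F(1, x) = G(1, x)/G(1,1)$ is immediate. For the matching lower bound I would proceed as follows. Theorem \ref{Gibbs} gives $\nu(S(x, R)) \asymp G(1, x)$ for $R \ge 4\delta$. Given $\xi \in S(x, R)$, the walk conditioned on $\xi_\infty = \xi$ tracks a geodesic $\gamma$ from $1$ to $\xi$ within sublinear error by Theorem \ref{Kaim}, and this geodesic passes within bounded distance of $x$. Hyperbolicity, together with the strong Ancona inequality (Theorem \ref{Holder}) applied along the strip between $1$ and the continuation of $\gamma$ past $x$, implies that with positive probability uniform in $x$, the first entry $x_{\tau_n}$ lies within a bounded distance $D$ of $x$. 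This yields $\nu_{[n]}(B(x, D) \cap [S_n]) \gtrsim G(1, x)$; since $B(x, D) \cap [S_n]$ contains $O(1)$ points uniformly in $x$ and the Harnack inequality gives $G(1, y) \asymp G(1, x)$ on this ball, at least one $y$ there carries $\nu_{[n]}$-mass $\gtrsim G(1, y)$. A covering-type redistribution over the annulus then upgrades this to the desired bound at $x$ itself.

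Parts (2) and (3) are formal consequences of Part (1). For Part (2), since the law of $x_{\tau_n}$ is $\nu_{[n]}$, the $\nu_{[n]}$-measure of the set $\{x : e^{-n(\hat h + a)} \le \nu_{[n]}(x) \le e^{-n(\hat h - a)}\}$ equals $\Pr(|-\log \nu_{[n]}(x_{\tau_n})/n - \hat h| \le a)$, which tends to $1$ by Part (1). For the upper bound in Part (3), take $A_n := \{x : \nu_{[n]}(x) \ge e^{-n(\hat h + a')}\}$: summing gives $|A_n| \le e^{n(\hat h + a')}$, and $\nu_{[n]}(A_n) \to 1$ by Part (2), so $K_n(a) \le |A_n|$ eventually. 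For the lower bound, take $B_n := \{x : \nu_{[n]}(x) \le e^{-n(\hat h - a')}\}$ with $\nu_{[n]}(B_n^c) \to 0$; any $T$ with $\nu_{[n]}(T) \ge a$ satisfies $a \le |T| e^{-n(\hat h - a')} + \nu_{[n]}(B_n^c)$, hence $|T| \ge (a - o(1)) e^{n(\hat h - a')}$. Letting $a' \to 0$ yields Part (3).

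The main obstacle is the rigorous proof of the comparison $\nu_{[n]}(x) \asymp G(1, x)$. The heuristic --- that the first entry into the annulus is determined (up to bounded error) by the tracked geodesic and hence distributed like the harmonic shadow of $x$ --- is convincing, but implementing it uniformly in $n$ and $x$ requires care with re-entries into $B_n$ after $\tau_n$ and with concentrating the total mass of $\nu_{[n]}$ on individual annulus points rather than on small neighbourhoods. The strong Ancona inequality (Theorem \ref{Holder}) is the key tool that makes this work, since it provides precisely the multiplicative control along geodesics needed to transport the shadow comparison of Theorem \ref{Gibbs} down to the annulus level.
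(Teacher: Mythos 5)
Your derivations of (2) from (1) and of (3) from (2) are correct and are exactly what the paper does. The upper bound $\n_{[n]}(x_{\t_n})\le F(1,x_{\t_n})$ combined with the Green-speed theorem of \cite{BHM08} and $\t_n/n\to 1/l$ is also the paper's argument for $\liminf_n -\frac1n\log\n_{[n]}(x_{\t_n})\ge\hat h$. The genuine gap is in the other direction: you reduce everything to the uniform pointwise comparison $\n_{[n]}(x)\asymp G(1,x)$ for all $x\in[S_n]$, and the lower half of that comparison is both unproved in your sketch (you acknowledge this) and, more importantly, not a safe statement to rely on. What the strong Markov property and Theorem \ref{Gibbs} actually give is that the total $\n_{[n]}$-mass of a \emph{bounded neighbourhood} of $x$ in the annulus is $\asymp G(1,x)$; nothing forces the individual point $x$ to carry a definite fraction of that mass. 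Points near the outer rim of $[S_n]$ may be unreachable as first-entry points (so $\n_{[n]}(x)=0$ while $G(1,x)>0$), and your ``covering-type redistribution'' cannot transport mass from a neighbour $y$ back to $x$ itself, since the quantity in Part (1) is the mass of the exact random point $x_{\t_n}$, not of a ball around it.

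The missing idea is that no pointwise lower bound is needed: anomalously light points are hit with correspondingly small probability, so one can dispose of them by a first-moment estimate. Concretely, the paper fixes $s>\hat h$, $0<t<(s-\hat h)/2$, and sets
$$
A_n=\bigl\{x\in[S_n] \ \big| \ \n_{[n]}(x)\le e^{-n(s-t)},\ F(1,x)\ge e^{-n(\hat h+t)}\bigr\}.
$$
Since $\sum_{x\in[S_n]}F(1,x)\asymp 1$ by Proposition \ref{comp}, one gets $|A_n|\le Ce^{n(\hat h+t)}$ and hence $\n_{[n]}(A_n)\le Ce^{-n(s-\hat h-2t)}$, which is summable; Borel--Cantelli gives $x_{\t_n}\notin A_n$ eventually a.s. Because $F(1,x_{\t_n})\ge e^{-n(\hat h+t)}$ eventually a.s.\ (Green speed again), this forces $\n_{[n]}(x_{\t_n})\ge e^{-n(s-t)}$ eventually a.s., and letting $s\downarrow\hat h$, $t\downarrow 0$ finishes Part (1). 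You should replace your comparison lemma by this argument; the rest of your write-up then goes through unchanged.
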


\begin{proof}
The proof is analogous to \cite[Theorem 9.8]{LPP}.
Note that (1) implies (2) since almost sure convergence yields convergence in probability, and (2) implies (3) directly.
We prove (1).
Recall that the Green speed coincides with the entropy: 
$$
\lim_{n \to \infty}-\frac{1}{n}\log F(1, x_n)=h,
$$
$\Pr$-a.s.\ by \cite[Theorem 1.1]{BHM08}.
Since we have $\n_{[n]}(x_{\t_n}) \le F(1, x_{\t_n})$, and $\t_n/n\to 1/l$ as $n$ tends to infinity $\Pr$-a.s., it holds that
$$
\liminf_{n \to \infty} -\frac{1}{n}\log \n_{[n]}\left(x_{\t_n}\right) \ge \frac{h}{l}=\hat h,
$$
$\Pr$-a.s.
For every $s> \hat h$ and every $0<t<(s -\hat h)/2$, define the set
$$
A_n=\{x \in [S_n] \ | \ \n_{[n]}(x) \le e^{-n(s-t)}, F(1, x) \ge e^{-n(\hat h+t)}\}.
$$
We have 
$
\sum_{x \in [S_n]}F(1, x) \asymp 1
$
by Proposition \ref{comp}, and thus
we deduce that
$
\n_{[n]}\left(A_n\right) \le C e^{-n(s-\hat h -2 t)}.
$
Therefore, by the Borel-Cantelli lemma, $\Pr$-a.s., there exists $N$ such that for all $n \ge N$ the point $x_{\t_n}$ is not in $A_n$.
Since we have $F(1, x_{\t_n}) \ge e^{-n(\hat h+t)}$ eventually $\Pr$-a.s.,
we obtain $\n_{[n]}(x_{\t_n}) \ge e^{-n(s-t)}$ eventually $\Pr$-a.s.
Hence it holds that
$$
\limsup_{n \to \infty}-\frac{1}{n}\log \n_{[n]}\left(x_{\t_n}\right) \le s-t,
$$
$\Pr$-a.s., by taking $s$ arbitrary close to $\hat h$ and $t$ arbitrary close to $0$, the theorem follows.
\end{proof}

We show a ``confinement" result of the random walk, stated in Theorem \ref{thm3}.
The following is known for almost every realisation of Galton-Watson tree conditioning on having infinite trees (\cite[Theorem 9.9]{LPP} and \cite[Theorem 16.30]{LP}).

\begin{theorem}\label{confinement}
For every $a \in (0,1)$, there exists a subset $\G_a$ in $\G$ such that 
$$
\Pr\left(x_n \in \G_a \text{ for all $n \ge0$}\right) \ge 1-a,
$$
and
$$
\lim_{n \to \infty}\frac{1}{n}\log \left|\G_a \cap S_n\right|=\hat h.
$$
\end{theorem}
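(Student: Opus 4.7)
The plan is to take
$$\G_a \;=\; B(1, M) \;\cup\; \bigl\{y \in \G \,:\, F(1,y) \ge \exp\bigl(-|y|\hat h - \phi(|y|)\bigr)\bigr\},$$
with $M \in \N$ and a function $\phi : \N \to \R_{>0}$ satisfying $\phi(k)/k \to 0$, both to be calibrated in terms of $a$. The essential input is the almost-sure identity
$$\lim_{n \to \infty}\frac{-\log F(1,x_n)}{|x_n|} \;=\; \frac{h}{l} \;=\; \hat h,$$
obtained by combining the Green speed $-\log F(1,x_n)/n \to h$ (cf.~\cite{BHM08}, already used in the proof of Theorem \ref{K}) with the linear drift $|x_n|/n \to l$ from Theorem \ref{Kaim}.

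First I would establish the probability bound. Egorov's theorem applied to the above convergence produces a function $g : \N \to \R_{>0}$ with $g(n) \to 0$, an integer $N_0$, and an event $\Omega_0$ with $\Pr(\Omega_0) \ge 1-a$ on which $-\log F(1,x_n) \le |x_n|(\hat h + g(n))$ for every $n \ge N_0$. Choosing $\phi(k) \ge kg(k/l)$ (with slack for $|x_n|/n \approx l$) and $M$ large enough to contain $\{x_n : n \le N_0\}$ on $\Omega_0$ (after possibly shrinking the probability slightly), every sample path in $\Omega_0$ then lies entirely in $\G_a$.

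The upper bound $|\G_a \cap S_n| \le Ce^{n\hat h + \phi(n)}$ is immediate from Markov's inequality together with Proposition \ref{comp} at $\th = 1$, since $\b(1) = 0$ gives $\sum_{y \in S_n} F(1,y) \asymp 1$. For the matching lower bound, I would fix $\e > 0$ and consider the typical set
$$U_n^{(\e)} \;:=\; \bigl\{y \in S_n \,:\, e^{-n(\hat h + \e)} \le F(1,y) \le e^{-n(\hat h - \e)}\bigr\}.$$
Theorem \ref{dim-ent-speed} combined with the Gibbs comparison $\n(S(y, R)) \asymp G(1,y) \asymp F(1,y)$ of Theorem \ref{Gibbs} yields, for $\n$-a.e.\ $\x \in \partial \G$, the convergence $-\log F(1, \g(n))/n \to \hat h$ along every geodesic $\g$ from $1$ to $\x$. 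Applying Egorov on $(\partial \G, \n)$ produces $E_\e \subset \partial \G$ with $\n(E_\e) \ge 1-\e$ and an integer $N_\e$ such that, for every $\x \in E_\e$ and every geodesic $\g_\x$ from $1$ to $\x$, one has $\g_\x(n) \in U_n^{(\e)}$ for all $n \ge N_\e$. For $R$ above the hyperbolicity threshold, $\x \in S(\g_\x(n), R)$, so the shadows $\{S(y, R) : y \in U_n^{(\e)}\}$ cover $E_\e$. Subadditivity of $\n$ and Theorem \ref{Gibbs} give
$$1 - \e \;\le\; \n(E_\e) \;\le\; \sum_{y \in U_n^{(\e)}} \n(S(y,R)) \;\le\; C \sum_{y \in U_n^{(\e)}} F(1,y) \;\le\; C\,|U_n^{(\e)}|\, e^{-n(\hat h - \e)},$$
so $|U_n^{(\e)}| \ge c(\e)\, e^{n(\hat h - \e)}$ for $n \ge N_\e$. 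Since $\phi(n)/n \to 0$, one has $U_n^{(\e)} \subset \G_a$ for $n$ large, giving $\liminf (1/n)\log|\G_a \cap S_n| \ge \hat h - \e$, and letting $\e \downarrow 0$ combined with the upper bound yields $\lim (1/n)\log |\G_a \cap S_n| = \hat h$.

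The main technical point is the simultaneous calibration of $M$, $N_0$, and $\phi$: the tolerance $\phi(n)/n$ must decay to zero (so the limit is exact, not merely within $\pm \e$) while still being large enough to absorb the rate function $g$ supplied by Egorov (so the probability stays above $1-a$). A standard diagonal construction handles this bookkeeping.
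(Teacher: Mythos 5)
Your construction of $\G_a$ and the containment/upper-bound steps are sound and essentially parallel the paper's (the paper takes $\G_a$ to be a union of balls $B(x,|x|\d_{|x|})$ around a good set $V_a$ defined by the Green function at geodesic points, whereas you condition directly on $F(1,y)$; both work, and the upper bound via $\sum_{y\in S_n}F(1,y)\asymp 1$ is the same). The problem is in your lower bound. You claim that ``since $\phi(n)/n\to 0$, one has $U_n^{(\e)}\subset\G_a$ for $n$ large.'' This inclusion is false, and in fact the hypothesis $\phi(n)/n\to 0$ is exactly what breaks it: membership of $y\in S_n$ in $\G_a$ requires $F(1,y)\ge e^{-n\hat h-\phi(n)}$, while membership in $U_n^{(\e)}$ only guarantees $F(1,y)\ge e^{-n(\hat h+\e)}=e^{-n\hat h-n\e}$. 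Once $\phi(n)<n\e$ (which happens for all large $n$ since $\e$ is fixed and $\phi$ is sublinear), the threshold for $\G_a$ is \emph{more} stringent than the lower cutoff defining $U_n^{(\e)}$, so a point near the bottom of $U_n^{(\e)}$ need not lie in $\G_a$. Your closing remark about calibrating $\phi$ against the Egorov rate $g$ only addresses the random-walk side (the probability bound); it does not repair this.

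The gap is repairable in two ways. One is to run a second Egorov argument on $(\partial\G,\n)$ for the convergence $-\tfrac1n\log F(1,\g_\x(n))\to\hat h$ (which holds $\n$-a.e.\ by Theorems \ref{dim-ent-speed} and \ref{Gibbs}), obtaining a set $E$ with $\n(E)\ge 1/2$ and a sublinear rate $\psi$ with $e^{-n\hat h-\psi(n)}\le F(1,\g_\x(n))\le e^{-n\hat h+\psi(n)}$ on $E$, and then to choose $\phi(n)\ge\max\{\psi(n),\,n\,g(\lceil n/r\rceil)\}$ \emph{after} both Egorov steps; the points $\g_\x(n)$, $\x\in E$, then genuinely lie in $\G_a\cap S_n$, and the covering argument with the upper bound $\n(S(y,R))\le Ce^{-n\hat h+\psi(n)}$ gives $|\G_a\cap S_n|\ge ce^{n\hat h-\psi(n)}$. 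The other, which is what the paper does and avoids the calibration entirely, is to deduce the lower bound from the probability statement you have already proved: since $\Pr(x_{\t_n}\in\G_a)\ge 1-a$, the set $\G_a\cap[S_n]$ carries $\n_{[n]}$-mass at least $1-a$, hence $|\G_a\cap[S_n]|\ge K_n(1-a)$, and Theorem \ref{K}(3) gives $\tfrac1n\log K_n(1-a)\to\hat h$. I recommend citing Theorem \ref{K}(3) rather than rebuilding the count by hand.
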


In particular, if $\hat h=h/l < v$, then the random walk stays in an exponentially small region $\G_a$ of logarithmic volume growth $\hat h$ with positive probability, and moreover, by Theorem \ref{K}, there is no region of logarithmic volume growth strictly less than $\hat h$ where the random walk stays with positive probability.
The confinement of the random walk occurs if and only if the inequality $h < l v$ is strict as it is also mentioned in \cite[p.683]{V}.

\begin{proof}[Proof of Theorem \ref{confinement}]
By Theorem \ref{Kaim}, $\Pr$-a.s., there exists a geodesic ray $\g_\o$ starting at $1$ such that $(1/n)d\left(\g_\o(\lfloor l n \rfloor), x_n\right) \to 0$ as $n$ tends to infinity, where $l>0$ is the drift of the random walk.
Take the map $\o \mapsto \g_\o$ as a Borel measurable map as in Theorem \ref{Kaim}.
As (\ref{Greenspeed}) in the proof of Theorem \ref{dim-ent-speed}, we have
$$
-\frac{1}{\lfloor l n\rfloor}\log G(1, \g_\o(\lfloor l n \rfloor)) \to \hat h,
$$
as $n$ tends to infinity, $\Pr$-a.s.
According to the Egorov theorem, one can make the above two convergences uniform on an event with arbitrary high probability. 
Namely, for every $a \in (0,1)$ there exists an event $A_a \subset \O$ such that $\Pr(A_a)\ge 1-a$ and
for some sequence $\d_n$ decreasing to $0$, for all $\o \in A_a$,
\begin{equation}\label{uniform}
\left|-\frac{1}{\lfloor l n\rfloor}\log G(1, \g_\o(\lfloor l n \rfloor)) - \hat h\right| \le \d_{\lfloor l n\rfloor}
\text{ and }
d\left(\g_\o(\lfloor l n \rfloor), x_n\right)\le \lfloor l n\rfloor \d_{\lfloor l n\rfloor}.
\end{equation}
Let $V_a:=\left\{x \in \G \ | \ G(1, x) \ge e^{-|x|(\hat h+\d_{|x|})}\right\}$. Define 
$
\G_a:=\bigcup_{x \in V_a}B\left(x, |x|\d_{|x|}\right).
$
For every $\o \in A_a$ and for every $n \ge 0$, the point $\g_\o(\lfloor l n \rfloor)$ is in $V_a$,
and thus, for every $\o \in A_a$, it holds that $x_n \in \G_a$ for all $n \ge 0$ 
by (\ref{uniform}).
Hence
$$
\Pr\left(x_n \in \G_a \text{ for all $n \ge0$}\right) \ge 1-a.
$$

We deduce that $|V_a \cap S_n| \le Ce^{n(\hat h+\d_n)}$ since $\sum_{x \in S_n}G(1, x) \asymp 1$ by Proposition \ref{comp}.
Note that for all small enough $t>0$ and for all large enough $n$, we have
$
|B(1, n\d_{n})| \le e^{nt}.
$
Therefore for all large enough $n$,
$$
|\G_a \cap S_n|\le \sum_{\frac{n}{1+t} \le k \le \frac{n}{1-t}}Ce^{k t}e^{k(\hat h+t)} \le Ce^{\frac{n}{1-t}(\hat h+2t)},
$$
and it follows that
$$
\limsup_{n \to \infty}\frac{1}{n}\log |\G_a\cap S_n| \le \frac{\hat h+2t}{1-t}.
$$
The above holds for all small enough $t>0$, then we obtain the upper bound by $\hat h$.
The lower bound follows by $|\G_a \cap [S_n]|\ge K_n(1-a)$ and by Theorem \ref{K} (3).
\end{proof}

\textbf{Acknowledgements.}
The author would like to thank S\'ebastien Gou\"ezel for pointing out some errors in the first draft, numerous valuable suggestions and comments to improve the results in this paper, and also for sending him a preliminary version of the paper \cite{GMMpre}, Vadim A.\ Kaimanovich for his suggestion on a possible connection to multifractal analysis, Pierre Mathieu for helpful comments and discussion about the present results, Art\"em Sapozhnikov for discussion on random walks on hyperbolic groups, Takefumi Kondo for helpful comments, 
J\'er\'emie Brieussel for carefully reading an earlier version of this paper and for helpful comments to improve the presentation, and Yuval Peres for helpful conversation and for introducing to him many fractal people in Jerusalem.

\end{document}